\let\over\@@over
\let\atop\@@atop
\definecolor{darkgreen}{rgb}{0.1,0.7,0.1}
\definecolor{darkred}{rgb}{0.7,0.1,0.1}
\newtheorem{assumption}[lemma]{Assumption}
\newcommand\minus{%
  \setbox0=\hbox{-}%
  \vcenter{%
    \hrule width\wd0 height \the\fontdimen8\textfont3%
  }%
}
\def\m{\mathfrak{m}}
\def\s{\mathfrak{s}}
\def\${|\!|\!|}
\def\m{\mathfrak{m}}
\newcommand{\p}{\mathrm{p}}
\newcommand{\e}{\mathrm{e}}
\newcommand{\w}{\rw}
\newcommand{\wun}{\rw^{(1)}}
\newcommand{\wde}{\rw^{(2)}}
\newcommand{\wi}{\rw^{(i)}}
\newcommand{\wPi}{\mathrm{w}_\Pi}
\newcommand{\pa}{\mathrm{p}_a}
\def\tun{\mathbf{1}}
\newcommand{\rw}{\mathrm{w}}
\newcommand{\bbE}{\mathbb{E}}
\newcommand{\cA}{\mathcal{A}}
\newcommand{\cB}{\mathcal{B}}
\newcommand{\cC}{\mathcal{C}}
\newcommand{\cD}{\mathcal{D}}
\newcommand{\cE}{\mathcal{E}}
\newcommand{\cF}{\mathcal{F}}
\newcommand{\cG}{\mathcal{G}}
\newcommand{\cI}{\mathcal{I}}
\newcommand{\cM}{\mathcal{M}}
\newcommand{\cO}{\mathcal{O}}
\newcommand{\cP}{\mathcal{P}}
\newcommand{\cQ}{\mathcal{Q}}
\newcommand{\cR}{\mathcal{R}}
\newcommand{\cS}{\mathcal{S}}
\newcommand{\cT}{\mathcal{T}}
\newcommand{\cU}{\mathcal{U}}
\newcommand{\ccD}{\mathscr{D}}
\newcommand{\ccE}{\mathscr{E}}
\begin{document}

\title{Multiplicative stochastic heat equations\\on the whole space}
\author{Martin Hairer$^1$ and Cyril Labb\'e$^2$}
\institute{University of Warwick, \email{M.Hairer@Warwick.ac.uk}
\and Universit\'e Paris Dauphine, \email{Labbe@Ceremade.Dauphine.fr}}

\date{\today}

\maketitle

\begin{abstract}
We carry out the construction of some ill-posed multiplicative stochastic heat equations on unbounded domains. 
The two main equations our result covers are, on the one hand the parabolic Anderson model on $\R^3$, and on 
the other hand the KPZ equation on $\R$ via the Cole-Hopf transform. To perform these constructions, we adapt 
the theory of regularity structures to the setting of weighted Besov spaces. One particular feature of 
our construction is that it allows one 
to start both equations from a Dirac mass at the initial time. 
\end{abstract}

\tableofcontents

\section{Introduction}

In the present paper, we consider the following stochastic partial differential equation:
\begin{equ}[e:E]
\tag{E}
\partial_t u = \Delta u + u\cdot\xi \;,\qquad u(0,\cdot) = u_0(\cdot)\;,
\end{equ}
where $u$ is a function of $t\geq 0$, $x\in\R^d$, and $\xi$ is an irregular noise process. 
While large parts of our analysis are dimension-independent, a natural subcriticality condition 
restricts the dimensions in which we can consider the most-interesting case of delta-correlated noise. 
We will henceforth be mainly concerned with two instances of this equation: $d=3$ and $\xi$ is a 
white noise in space only, 
we refer to this case as (PAM); $d=1$ and $\xi$ is a space-time white noise, we call this case (SHE).

When $\xi$ is a white noise in space, without dependence in time, this equation is indeed called the parabolic 
Anderson model (PAM). In dimension $d\geq 2$, the equation is ill-posed, due to the very singular product 
$u\cdot\xi$. Indeed, $u$ is expected to be $(2+\alpha)$-H\"older where the regularity of the noise $\alpha$ is strictly lower than $-d/2$, so that the sum of the regularities of $u$ and $\xi$ is strictly negative, and therefore, the product $u\cdot\xi$ does not fall in the scope of classical integration theories~\cite{BookChemin,Young}. To make sense of this product, one actually needs to perform some \textit{renormalisation} which boils down to, roughly speaking, subtracting some infinite linear term from the equation.

When the space variable is restricted to a torus of dimension $2$, the solution of a generalised version of (PAM) has been constructed independently by Gubinelli, Imkeller and Perkowski~\cite{GubImkPer} using paracontrolled distributions, and by Hairer~\cite{Hairer2014} via the theory of regularity structures. The construction on a torus of dimension $3$ follows immediately from recent results of 
Hairer and Pardoux~\cite{Etienne}.
The construction of (PAM) on the full space $\R^2$ has been obtained recently~\cite{HairerLabbePAM2}, using a simple change of unknown that spares one from requiring elaborate renormalisation theories. This is not possible anymore in dimension $3$: in the present paper, we adapt the theory of regularity structures to perform the construction of (PAM) on the full space $\R^3$.

When $\xi$ is a space-time white noise, we refer to \eqref{e:E} as the multiplicative stochastic heat 
equation (SHE). Already in dimension $d=1$, the product $u\cdot\xi$ is ill-defined. However, in 
dimension $1$, the It\^o integral allows one to make sense of this equation: it requires the noise to 
be a martingale in time and the solution $u$ to be adapted to the filtration of the noise. This 
construction breaks down for space-time regularisations of the white noise so that it does not 
imply convergence of space-time mollified versions of the original equation. When the space variable is 
restricted to a torus of dimension $1$, this equation has been constructed by Hairer and 
Pardoux~\cite{Etienne} in the framework of regularity structures: they define the solution map on 
a space of noises that contains a large class of space-time mollifications of the white noise. In 
the present paper, we lift the restriction of the torus and perform the construction on the whole line $\R$.

This equation is intimately related to the KPZ equation~\cite{KPZ86}. Indeed, formally, the Cole-Hopf transform 
sends the ill-posed KPZ equation to (SHE); Bertini and Giacomin~\cite{BG97} exploited this fact to prove the 
convergence of the fluctuations of the weakly asymmetric simple exclusion process to the KPZ equation on $\R$. 
A more direct interpretation of the KPZ equation itself has recently been obtained by Hairer~\cite{HairerKPZ}, 
when the space variable is restricted to a torus of dimension $1$.

In addition to the ill-defined product $u\cdot\xi$ that needs to be renormalised for both (PAM) and (SHE), there are two major 
issues that we address in this work: first, we construct these SPDEs on an unbounded underlying space 
instead of a torus; second, we consider a Dirac mass as the initial condition.

\smallskip

Let us first comment on the specific difficulty arising from the unboundedness of the underlying space, when constructing the solutions to these SPDEs. Since the white noise is not uniformly H\"older on an unbounded space, one cannot expect to 
obtain solutions that are uniformly bounded over the underlying space and one needs to weight the H\"older spaces of functions/distributions at infinity. This is a classical problem when dealing with stochastic PDEs in unbounded domains, 
see for example \cite{MR933824,MR1113223}, as well as the recent work \cite{2015arXiv150106191M} 
which is somewhat closer in spirit to the equations
considered here.
A priori, these weights cause some trouble in obtaining a fixed point for the map $u\mapsto P*(u\cdot\xi) + P*u_0$, where $P$ is the heat kernel. Indeed, since the weight needed for the product $u\cdot\xi$ is a priori larger than the weight of $u$ itself, the map would take values in a space bigger than the one $u$ lives in and the fixed point argument would not apply.

There is a way of circumventing this problem by considering a time-increasing weight and by using the averaging in time of the weight due to the time convolution with the heat kernel. More precisely, the white noise can be weighted by a polynomial weight $\pa(x)=(1+|x|)^a$ with $a$ as small as desired, so that, if we weight the solution by $\e_t(x)= e^{t(1+|x|)}$, then $\int_0^t P_{t-s}*(\xi\cdot u_s) ds$ can be weighted by $\int_0^t \pa(x)\e_s(x) ds$ which is smaller than $\e_t(x)$. We refer to~\cite{HairerLabbePAM2} for a construction of (PAM) on $\R^2$ using this idea, and to~\cite{HaPaPi2013} where this trick already appeared.
The main difficulty is therefore to incorporate the trick outlined above into the theory of regularity structures, and this will require to have an accurate control on the weights arising along the construction. In particular, a major issue comes from the fact that $\e_t(x)/\e_s(y)$ is not bounded from above and below, uniformly over all $(t,x), (s,y)$ lying at distance, say, $1$ from each other.

Let us point out that the important feature of the exponential weight is not the growth at infinity in $x$, but the exponential growth in $t$. In particular, the trick presented above works the same with $e_{t+\ell}(x)$, where $\ell\in\R$ is arbitrary. Consequently, if the initial condition has bounded support, then one can choose $\ell$ arbitrarily small so that the weight of the solution at any time $t < -\ell$ decays exponentially fast in $x$.

At this point, let us mention that our method to construct solutions of SPDEs on the whole space would still apply to equations of the form
\begin{equ}
\partial_t u = \Delta u + F(u) + G(u)\cdot\xi\;,\quad u(0,\cdot) = u_0(\cdot)\;,
\end{equ}
where $F,G$ are smooth functions whose growth at infinity is at most linear, and such that $G(0)=0$, and $u_0$ decays exponentially fast at infinity. Let us give a brief explanation. Under these hypotheses, if we weigh the solution at time $t > 0$ by $e_{t+\ell}(x)$ for some $\ell < 0$, then the weight of $F(u)$ at time $t$ is of order $e_{t+\ell}(x)$, and the weight of $G(u)\xi$ is of order $e_{t+\ell}(x) \pa(x)$. Notice that, if $G(0)\ne 0$ then the latter is not true anymore when $t+\ell < 0$. Due to the non-linearities, the fixed point map is only locally Lipschitz with a proportionality constant which is quadratic in the solution. At the level of the weights, this is not a problem as long as they are uniformly bounded over all $x$: this is the case for all $t < -\ell$, and thus, we can obtain a local solution.\\
On the other hand, this approach does not yield a solution theory for the KPZ equation (without Hopf-Cole transform) on the whole line: the non-linearity is then quadratic in the (derivative of) the solution so that its weight would be the square of the weight for the solution, and the trick presented above would not work anymore.

\smallskip

Regarding the initial condition, let us point out that the Picard iterations associated to (\ref{e:E}) involve products of the form $(P*u_0) \cdot \xi$. By the classical integration theories~\cite{BookChemin,Young}, this product makes sense as soon as the regularity of $P*u_0$ is strictly larger than $-\alpha$, where $\alpha$ is the regularity of the noise. $P*u_0$ is smooth away from $t=0$, but its space-time regularity near $t=0$ coincides with the space regularity of $u_0$. Since the time regularity counts twice in the parabolic scaling, it is possible to make sense of $(P*u_0) \cdot \xi$ as long as $u_0$ has a regularity better than $-2-\alpha$, using \textit{integrable} weights around time $0$. The H\"older regularity of the Dirac mass being equal to $-d$, this would prevent us from choosing $u_0 =\delta_0$ for both (PAM) and (SHE).

One way of circumventing this problem is to exploit the fact that on the other hand the Dirac distribution
is ``almost'' an $L^1$ function. In particular, it belongs to the Besov spaces $B_{p,\infty}^\beta$
as soon as  $\beta < -d+d/p$. Since the classical integration theories allow one to multiply $\cC^\alpha$ by $B_{p,\infty}^\beta$ as soon as $\alpha+\beta>0$, the threshold on the regularity of the initial condition would not be modified upon this change of distributions spaces. Choosing $p$ small enough, one would then be able to take a Dirac mass as the initial condition. Let us emphasise that we do not choose an $L^p$-type space for the white noise but still consider the H\"older space $\cC^\alpha$: otherwise, the integrability of the solution would be deteriorated upon multiplication with the noise. We now present the main steps of the construction of the solution to (\ref{e:E}).

First, we define a \textit{regularity structure}, which is an abstract framework that allows one to associate to a function/distribution some \textit{generalised} Taylor expansion around any space/time point. The building blocks of this regularity structure are, on the one hand, polynomials in the space/time indeterminates, and on the other hand, some abstract symbols $\Xi$, $\cI(\Xi)$, $\ldots$,  associated with the noise. Then, one needs to reformulate the solution map that corresponds to (\ref{e:E}) into the abstract framework of the regularity structure. Namely, one has to provide abstract formulations of the multiplication with the noise $\xi$ and the convolution with the heat kernel $P$.

Second, we build a so-called \textit{model} which associates to the abstract symbols some analytical values. Actually, we start with a mollified version of the noise $\xi_\epsilon=\rho_\epsilon*\xi$, where $\rho_\epsilon(t,x)=\epsilon^{-2-d}\rho(t\epsilon^{-2},x\epsilon^{-1})$ is a smooth, compactly supported function which is such that 
$\rho(t,x) = \rho(t,-x)$, and we build a model $(\Pi^\epsilon, F^\epsilon)$ which, in particular, associates to the symbol $\Xi$ the smooth function $\xi_\epsilon$. One important feature is that the abstract solution given by the solution map, with this particular model, coincides (upon an operation called \textit{reconstruction}) with the classical solution of the well-posed SPDE
\begin{equ}[e:Eeps]\tag{E$_\epsilon$}
\partial_t u_\epsilon = \Delta u_\epsilon + u_\epsilon\cdot\xi_\epsilon \;,\qquad u_\epsilon(0,\cdot) = u_0(\cdot)\;.
\end{equ}

Third, we renormalise the model $(\Pi^\epsilon,F^\epsilon)$ by modifying the values associated to some symbols: namely, those symbols that stand for ill-defined products. Roughly speaking, the modification of these values consists in subtracting some divergent constant $C_\epsilon$. The effect of this renormalisation procedure is actually very clear at the level of the SPDE, since the abstract solution then corresponds to
\begin{equ}[e:Eepshat]\tag{$\widehat{\text{E}}_\epsilon$}\partial_t \hat{u}_\epsilon = \Delta \hat{u}_\epsilon + \hat{u}_\epsilon\cdot(\xi_\epsilon-C_\epsilon) \;,\qquad \hat{u}_\epsilon(0,\cdot) = u_0(\cdot)\;.
\end{equ}

The final step consists in proving that the sequence of renormalised models converges as $\epsilon\downarrow 0$ in a sense that will be made clear later on. The continuity of the solution map then ensures that the sequence of abstract solutions converge, and furthermore, the limit is the fixed point of an abstract fixed point equation. This yields the convergence of the sequence of renormalised solutions $\hat{u}_\epsilon$ to a limit $u$.

Let us now outline some major modifications that we bring to the original theory of regularity 
structures~\cite{Hairer2014}. 
First, since we want to start (\ref{e:E}) from a Dirac mass, we need to 
choose an appropriate space of distributions. As explained earlier in the introduction, we are 
led to using (some analog in the context of regularity structures of) 
the $B_{p,\infty}^\beta$ spaces. Therefore, we present a new 
version of the reconstruction operator in this setting, see Definition~\ref{Def:HolderLp} and Theorem~\ref{Th:Reconstruction} for precise formulations. 
Second, our spaces of \textit{modelled distributions} are weighted at 
infinity; therefore, the reconstruction theorem and the abstract convolution with the heat kernel 
need to be modified in consequence, we refer to Theorems \ref{Th:ReconstructionWeight} and 
\ref{Th:Integration}. One major difficulty we run into is that one would like to consider
the same kind of weights as in \cite{HaPaPi2013,HairerLabbePAM2}, which are of the type $w(t,x) = \exp(t(1+|x|))$.
Unfortunately, such weights do \textit{not} satisfy the very desirable property
$c \le |w(z)/w(z')| \le C$ for some constants $c,C>0$, uniformly over space-time points $z,z'$ with
$|z-z'| \le 1$, although they \textit{do} satisfy this property for pairs of points that are
only separated in space. As a consequence, we need extremely fine control on the behaviour of
our objects as a function of time, see for example the bound \eref{Eq:RefinedBoundReconstructionp}
and the illustration of Figure~\ref{Fig}. Note that in the case of (PAM), where the noise varies only
in space, we could have defined our regularity structure on space only and viewed the solution as a function
of time with values in a space of modelled distributions, thus substantially shortening some of the arguments. We chose not to do that, and instead we present results that work indifferently for (PAM) and (SHE). This is possible since the algebraic and scaling properties of these two equations coincide. In particular, the regularity structure can be built in the same way for both. On the other hand, the analytic values associated to the elements in the regularity structure, i.e.~\textit{the model}, are specific: in particular the renormalisation constants are specific to (PAM) and (SHE).

The main result of the present work is as follows.
\begin{theorem}\label{Th:Main}
We consider either (PAM) where $d=3$, or (SHE) where $d=1$. Let $u_0 \in \cC^{\eta,p}_{w_0}(\R^d)$ with $\eta > -1/2$, $p\in [1,\infty)$ and $w_0(x) = e^{\ell(1+|x|)}$ for some $\ell \in \R$. There exists a divergent sequence of constants $C_\epsilon$ such that, on any interval of time $(0,T]$, the sequence of solutions $\hat{u}_\epsilon$ of (\ref{e:Eepshat}) converges uniformly on compact sets of $(0,\infty)\times\R^d$ to a limit $u$, in probability. 

Furthermore, the limit depends continuously on the initial condition $u_0$ and,
provided that $C_\eps$ is suitably chosen, it is independent of the choice of mollifier $\rho$. 
In the case of (SHE), the limit can be chosen to coincide with the classical solution to the 
multiplicative stochastic heat equation \cite{Walsh,DPZ}.
\end{theorem}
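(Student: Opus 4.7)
The plan is to follow the four-step strategy of the theory of regularity structures outlined in the introduction, with the crucial twists that the modelled distributions must be weighted at infinity and that the reconstruction operator must be set in an $L^p$-based Besov setting to accommodate the Dirac initial condition.

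I would begin by constructing a regularity structure $\mathscr{T}$ adapted to (\ref{e:E}), whose building blocks are abstract polynomials $X^k$ and noise-derived symbols $\Xi, \mathcal{I}(\Xi), \mathcal{I}(\Xi)\Xi, \ldots$ generated by iteratively expanding the formal Picard iteration and truncating at positive homogeneity. The equation is then reformulated as an abstract fixed point
\[
\mathcal{U} = \mathcal{K}(\mathcal{U}\,\Xi) + P u_0\;,
\]
where $\mathcal{K}$ is the abstract convolution operator of Theorem \ref{Th:Integration} and $P u_0$ encodes the heat evolution of the initial datum, viewed as a modelled distribution whose reconstruction lies in a weighted Besov-type space $\mathscr{C}^{\eta,p}_{w_0}$ as in Theorem \ref{Th:Reconstruction}. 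The heart of the deterministic argument is then to show contraction of this map on a small ball of a weighted modelled distribution space $\mathcal{D}^{\gamma,\eta}_{w}$, where $w$ is of the form $\e_t(x) = e^{t(1+|x|)}$.

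The main obstacle will be the weight bookkeeping. One would weight the noise by a polynomial $\pa(x)=(1+|x|)^a$, so that at time $s$ the product $\mathcal{U}\,\Xi$ carries the weight $\pa \cdot \e_s$; the time convolution with the heat kernel then recovers the weight $\e_t$ thanks to an averaging estimate of the form
\[
\int_0^t P_{t-s}(x-y)\,\pa(y)\,\e_s(y)\,dy \lesssim \e_t(x)\;,
\]
in the spirit of \cite{HaPaPi2013,HairerLabbePAM2}, making the map a self-map on $\mathcal{D}^{\gamma,\eta}_{w}$. The difficulty is that $\e_t(x)$ is not of bounded oscillation at unit scale in the time variable, so the weighted reconstruction and integration theorems \ref{Th:ReconstructionWeight} and \ref{Th:Integration} must be applied in their refined form, which tracks the time variable separately from the spatial one and accommodates the different behaviours illustrated in Figure~\ref{Fig}. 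Once a local contraction is obtained for small time, the solution is extended to $(0,T]$ by concatenation, using that $u$ regularises instantaneously away from $t=0$.

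With these deterministic ingredients in place, I would construct the canonical model $(\Pi^\epsilon, F^\epsilon)$ associated with $\xi_\epsilon = \rho_\epsilon * \xi$ and its renormalisation $(\hat\Pi^\epsilon, \hat F^\epsilon)$ obtained by subtracting the constants $C_\epsilon$ from the values assigned to the ill-defined symbols. The standard algebraic calculation shows that the reconstructed abstract fixed point built from $(\hat\Pi^\epsilon, \hat F^\epsilon)$ coincides with the classical solution $\hat u_\epsilon$ of (\ref{e:Eepshat}). Probabilistic convergence of $(\hat\Pi^\epsilon, \hat F^\epsilon)$ to a limit $(\hat\Pi, \hat F)$ in the topology of weighted models, uniformly on compact time intervals, is established through Wiener chaos estimates together with a Kolmogorov-type criterion adapted to the polynomially weighted norms; the continuity of the solution map in the model then yields local uniform convergence in probability of $\hat u_\epsilon$ to $u := \mathcal{R}\mathcal{U}$. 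Continuity in $u_0$ and independence of the mollifier (for an appropriate $C_\epsilon$, using the symmetry $\rho(t,x)=\rho(t,-x)$) then follow from the continuity and the symmetries of the construction, while the identification with the Walsh--Da Prato--Zabczyk solution in the (SHE) case is obtained by comparison on smooth noise and passing to the limit.
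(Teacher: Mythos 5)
Your proposal follows essentially the same route as the paper: weighted $L^p$-Besov spaces of modelled distributions, the refined weighted reconstruction and Schauder estimates (Theorems \ref{Th:ReconstructionWeight} and \ref{Th:Integration}) with the exponential-in-time weights handled exactly as you describe, a fixed point extended by time-shifting/concatenation (the paper exploits linearity so the local existence time is uniform), and then renormalisation plus continuity of the solution map in the model. The only cosmetic difference is that the paper does not redo the stochastic estimates for the convergence of the renormalised models (your Wiener chaos/Kolmogorov step) but imports them from Hairer--Pardoux, arguing that the renormalisation is local and hence unaffected by passing to the whole space.
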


\begin{remark}
In the framework of the regularity structure, we will define a lifted version of (PAM) or (SHE). This lifted version will admit a unique fixed point, see Theorem \ref{Th:FixedPt}. The fixed point does not live in a space of usual distributions, but in a space of so-called \textit{modelled distributions}. However, the reconstruction operator defined in Theorem \ref{Th:ReconstructionWeight} associates continuously (with the respective topologies) a genuine distribution to such an object.
\end{remark}

\begin{remark}
We refer to Definition \ref{Def:DistribSpace} for the precise space of distributions in which the convergence holds. Moreover, the space $\cC^{\eta,p}_{w_0}(\R^d)$ is defined in Subsection \ref{SubsectionIC}.
We would like to point out however that for $p$ sufficiently close to $1$ and $\eta$ negative
one has $\delta_0 \in \cC^{\eta,p}_{w_0}$.
\end{remark}

\begin{remark}
A result of M\"uller~\cite{Mueller} ensures that the classical (It\^o) solution of (SHE) is 
strictly positive at any time $t>0$ as soon as it starts from an initial condition which 
is non-negative and non-zero. In the same way as in \cite[Cor.~6.5]{Etienne}, this classical solution 
can be shown to coincide with our limit $u$ so that, 
by setting $h=\log u$, we recover the Cole-Hopf solution to the KPZ equation
formally given by
\begin{equ}
\tag{KPZ}
\partial_t h = \partial_x^2 h + (\partial_x h)^2 + \xi \;.
\end{equ}
In particular, if we start $u$ from $\delta_0$, the solution $h$ is referred to as 
the ``infinite wedge" solution to the KPZ equation, see~\cite{ACQ}. In this case,
our convergence result is new also in the case of solutions on the circle rather than
the whole real line.
Let us point out again however that our approach does not provide a direct solution theory 
to the KPZ equation on the line, as explained earlier in the introduction.
\end{remark}

\begin{remark}
The exponent $-{1\over 2}$ obtained in this result is sharp. 
Indeed, since the equation is linear in the initial condition, it is sufficient to be able 
to take $u_0 = \delta_y$, which is allowed in our setting. Denoting the corresponding 
solution - or solution kernel - by $K_t(x,y)$, general solutions are given by $u(t,x) = \int K_t(x,y)u_0(y)\,dy$.
Furthermore, in the case of (PAM), it is straightforward to see by an approximation argument
that $K_t$ is symmetric in both of its arguments. (In the case of (SHE) it is only symmetric in law.)
At this stage we then note that in both cases
we expect $K_t$ to inherit the regularity of the linearised problem, 
namely to be of H\"older regularity $\CC^\alpha$ for  $\alpha < {1\over 2}$ in both of its arguments, 
but no better. (In the case of (SHE) this is of course a well-known fact.)
Such functions cannot be tested against a generic distribution in $\cC^{\eta,1}$ if $\eta \le -1/2$.
\end{remark}

\begin{remark}
In the case of (PAM), denote by $K_t$ the integral operator on $L^2(\R^3)$ with kernel $(x,y) \mapsto K_t(x,y)$.
Then $K_t$ is in general an unbounded selfadjoint operator (with a domain depending on 
the realisation of the underlying noise!). Furthermore, $K_t$ is positive definite since its kernel
is obtained as a pointwise limit of positive kernels. Finally, for any fixed $t>0$, $K_t$ does not
admit any $\phi\in L^2$ with $K_t \phi = 0$.
Indeed, since the operators $K_t$ satisfy
$K_t K_s = K_{t+s}$, one would have $K_{t/n} \phi = 0$ for every $n > 0$, which would contradict the
fact that $K_t \phi$ converges to $\phi$ weakly as $t \to 0$.
As a consequence, we can define an operator $L = {1\over t}\log K_t$ by functional
calculus. This operator is naturally interpreted as a suitably renormalised version of the 
random Schr\"odinger operator
\begin{equ}
L_\xi = -\Delta + \xi\;,
\end{equ}
on $\R^3$. See \cite{Khalil} for more details on a similar construction in dimension 2 
(and bounded domain).
\end{remark}

In both cases, the renormalisation constant $C_\epsilon = c_\epsilon + c^{(1,1)}_\epsilon + c^{(1,2)}_\epsilon$ is given by
\begin{equs}
c_\epsilon &:= \int G(z) \rho_\epsilon^{*2}(z)dz\;, \\
c^{(1,1)}_\epsilon &:= \int G(z_1) G(z_2) G(z_3) \rho_\epsilon^{*2}(z_1+z_2) \rho_\epsilon^{*2}(z_2+z_3) \prod_{i=1}^3 dz_i\;,\label{Eq:RenormCsts}\\
c^{(1,2)}_\epsilon &:= \int G(z_1)G(z_2)\Big( G(z_3)\rho_\epsilon^{*2}(z_3)-c_\epsilon \delta_0(z_3)\Big)\rho_\epsilon^{*2}(z_1+z_2+z_3) \prod_{i=1}^3 dz_i\;.
\end{equs}
In the case of (PAM), $G$ is a compactly supported function that coincides with the Green's function of the $3$ dimensional Laplacian in a neighbourhood of the origin, and the integration variables lie in the underlying space $\R^3$ and do not depend on time. In the case of (SHE), $G$ is taken to be the heat kernel in dimension $1$, and the integration variables $z=(t,x)$ take values in $\R^2$. (With the usual convention
that the heat kernel takes the value $0$ for negative times.)
In both cases, $c_\epsilon = c\epsilon^{-1}$ with a proportionality constant $c$ that depends on $\rho$ and on the equation under consideration. The other two constants behave differently according to the equation: for (PAM), $c^{(1,1)}_\epsilon= -\frac{1}{16\pi}\log\epsilon + \cO(1)$ and $c^{(1,2)}_\epsilon =  \cO(1)$; while for (SHE) both $c^{(1,1)}_\epsilon$ and $c^{(1,2)}_\epsilon$ have finite limits as $\epsilon \to 0$ as shown in \cite{Etienne}.

Let us point out that we do not provide the details on the convergence of the models. Instead, we refer the 
reader to~\cite{Etienne} where the convergence of the mollified model associated with (SHE) on the 
one-dimensional torus has been proven. Since the models are ``local'' objects, the renormalisation is 
not affected upon passing to the whole line. Regarding (PAM), the algebraic and scaling properties of 
the equation coincide with those of (SHE) so that the proof works verbatim: only the actual values of the
renormalisation constants differ.

The remainder of the article is structured as follows. We start by giving a short introduction to the theory
of regularity structures, as used in our particular example, in Section~\ref{sec:regstruct}. The reader unfamiliar
with the theory may find \cite{Hairer2014} or the shorter introductions \cite{Intro,ICM} useful.
In all existing works, the spaces of ``modelled distributions'' on which the theory is built are based
on the standard H\"older spaces. In Section~\ref{sec:Besov}, we introduce new spaces of modelled distributions that are 
instead based on a class of inhomogeneous Besov spaces and we prove the reconstruction theorem in this context.
In Section~\ref{sec:Weights}, we then leverage the local results of Section~\ref{sec:Besov} to build suitable weighted spaces.
Section~\ref{sec:Schauder} contains a Schauder estimate for these spaces, which is the main ingredient for
building local solutions to the limiting problem. Finally, we combine all of these ingredients in Section~\ref{sec:final},
where we give the proof of Theorem~\ref{Th:Main}.

\subsection{Notations}\label{sec:notations}
From now on, we work in $\R^{d+1}$ where $d$ is the dimension of space and $1$ the dimension of time. We choose the parabolic scaling $\s=(2,1,\ldots,1)$, where $\s_0=2$ stands for the time scaling and $\s_i=1,i=1\ldots d$ for the scaling of each direction of space. We let $|\s|=\sum_{i=0}^{d} \s_i$. We denote by $\|z\|_{\s} = \max(\sqrt{|t|},|x_1|,\ldots,|x_d|)$ the $\s$-scaled supremum norm of a vector $z=(t,x)\in\R^{d+1}$. We will also use the notation $|k|=\sum_{i=0}^d \s_i k_i$ for any element $k\in\N^{d+1}$. To keep notation clear, we restrict the letters $s,t$ to denoting elements in $\R$, $x,y$ to denoting elements in $\R^d$, while the letters $k,m,\ell$ will stand for elements of $\N$ or $\N^{d+1}$. Moreover, in some cases we will use the letter $z$ to denote an element in $\R^{d+1}$.

For any smooth function $f:\R^{d+1}\rightarrow\R$ and any $k\in\N^{d+1}$, we let $D^k f$ be the function 
obtained from $f$ by differentiating $k_0$ times in direction $t$ and $k_i$ times in each direction $x_i$, 
$i\in\{1,\ldots,d\}$. For any $r>0$, we let $\cC^r$ be the space of functions $f$ on $\R^{d+1}$ such that 
$D^k f$ is continuous for all $k\in\N^{d+1}$ such that $|k|\leq r$. We denote by $\cB^r$ the subset of 
$\cC^r$ whose elements are supported in the unit parabolic ball and have their $\cC^r$-norm smaller 
than $1$. For all $\eta \in \cC^r$, all $(t,x)\in \R^{d+1}$ and all $\lambda > 0$, we set
\begin{equ}
\eta^\lambda_{t,x}(s,y) := \lambda^{-|\s|} \eta\Big(\frac{s-t}{\lambda^2},\frac{y_1-x_1}{\lambda},\ldots,\frac{y_d-x_d}{\lambda}\Big) \;,\quad\forall (s,y)\in\R^{d+1}\;.
\end{equ}
This rescaling preserves the $L^1$-norm.

Finally, for all $z\in\R^{d+1}$ and all $\lambda > 0$, we let $B(z,\lambda) \subset \R^{d+1}$ be the 
ball of radius $\lambda$ centred at $z$; here we implicitly work with the $\s$-scaled supremum norm 
$\|.\|_\s$. For $x\in\R^d$, we use the same notation $B(x,\lambda)$ to denote the ball in $\R^d$ of 
radius $\lambda$ and center $x$.

\subsection*{Acknowledgements}
We are grateful to Khalil Chouk for pointing out that the regularity index for 
the Dirac mass is higher in Besov / Sobolev type spaces than in H\"older type spaces. 
MH gratefully acknowledges financial support from the Philip Leverhulme Trust 
and from the European Research Council.

\section{Regularity structures and Besov-type spaces}

In the first subsection, we recall the basic definitions of regularity structures and models - this material is essentially taken from~\cite{Hairer2014}. In the second subsection, we adapt the definition of the spaces of modelled distributions from~\cite{Hairer2014} to the setting of Besov spaces. Then, we prove the corresponding reconstruction theorem. In the third subsection, we introduce the weighted spaces of modelled distributions by adding weights around $t=0$ and $x=\infty$ in the spaces previously introduced.

\subsection{Regularity structures and models}\label{sec:regstruct}
A regularity structure consists of two objects. First, a graded vector space $\cT=\bigoplus_{\zeta\in \cA} \cT_\zeta$ where $\cA$, called the set of homogeneities, is a subset of $\R$ which is locally finite and bounded from below. Second, a group $\cG$ of continuous linear transformations of $\cT$ whose elements $\Gamma\in\cG$ fulfil the following property
\[ \Gamma \tau - \tau \in \cT_{<\beta}\;,\quad\forall \tau\in\cT_\beta\;,\quad\forall\beta\in\cA \;,\]
where we wrote $\cT_{<\beta}$ as a shorthand for $\bigoplus_{\zeta<\beta} \cT_\zeta$.
A simple example of regularity structure is given by the polynomials in $d+1$ indeterminates $X_0,\ldots, X_d$. For every $\zeta\in\N$, let $\cT_\zeta$ be the set of all formal polynomials in $X_i$, $i=0\ldots d$ with $\s$-scaled degree equal to $\zeta$. Let us recall that the $\s$-scaled degree of $X^k=\prod_{i=0}^d X_i^{k_i}$, for any given $k\in\N^{d+1}$, is equal to $|k|=\sum \s_i k_i$. The set of homogeneities in this example is $\cA=\N$, while a natural structure group is the group of translations on $\R^{d+1}$.

In the case of our original class of equations (\ref{e:E}), the regularity structure, together with a set of canonical basis vectors for $\cT$, 
can be constructed as follows. We set $\alpha=-\frac{3}{2}-\kappa$ for a given $\kappa>0$ and we let $\cT_{\alpha}$
be a one-dimensional real vector space with basis vector
$\Xi$. Then we define two collections $\cU$ and $\cF$ of formal expressions by setting $\tun, X^k \in \cU$ for all $k\in\N^{d+1}$ and by imposing that they are the smallest sets satisfying the following two rules
\begin{equs}
\tau \in \cU  \Longleftrightarrow \tau\Xi \in \cF\;,\qquad \tau \in \cF \Longrightarrow \cI(\tau) \in \cU\;.
\end{equs}
(The product $(\Xi,\tau)\mapsto \tau\Xi$ is taken to be commutative so we will also write $\Xi\tau$ instead.) 
Writing $\scal{\CU}$ for the free real vector space generated by a set $\CU$, we then set
$\cT(\cU) = \scal{\cU}$, $\cT(\cF) = \scal{\cF}$ and $\cT = \scal{\cU \cup \cF}$.
Moreover, we write $\bar{\cT} \subset \cT(\cU)$ for the set of all polynomials in the $X_i$, $i=0,\ldots,d$.

The homogeneity $|\tau|$ of an element $\tau\in\cU\cup\cF$ is computed by setting $|\Xi| = \alpha$ , $|\tun| = 0$, $|X_i| = 1$ and by imposing the following rules
\begin{equs}
|\tau\bar{\tau}| = |\tau|+|\bar{\tau}|\;,\qquad|\cI(\tau)| = |\tau| + 2\;.
\end{equs}
The corresponding sets of homogeneities are denoted $\cA(\cU)$, $\cA(\cF)$ and $\cA=\cA(\cU) \cup \cA(\cF)$.
This also yields a natural decomposition of $\cT$ by $\cT_\alpha = \scal{\{\tau\,:\, |\tau| = \alpha\}}$.
It was shown in \cite[Sec.~8]{Hairer2014} that there is a natural group $\CG$ acting on $\cT$
in a way that is compatible with the definition of an ``admissible model'', see Definition~\ref{def:admissible} below.
The precise definition of $\CG$ does not matter for the purpose of the present article, so
we refer the interested reader to \cite[Sec.~8.1]{Hairer2014} and \cite[Sec.~3.2]{Etienne}.


\begin{figure}
\begin{center}
\begin{tabular}{p{2.1cm} l  | p{2.4cm} l  }\toprule
$\cU$ & $\cA(\cU)$  & $\cF$ & $\cA(\cF)$ \\
\midrule
$\tun$ &  $0$ & $\Xi$ &  $-\frac{3}{2}-\kappa$ \\
$\cI(\Xi)$  & $\frac{1}{2}-\kappa$ & $\Xi\cI(\Xi)$ & $-1-2\kappa$ \\
$\cI(\Xi\cI(\Xi))$ & $1-2\kappa$  & $\Xi\cI(\Xi\cI(\Xi))$ & $-\frac{1}{2}-3\kappa$ \\
$X_i$ & $1$ & $\Xi X_i$ & $-\frac{1}{2}-\kappa$\\
$\cI(\Xi\cI(\Xi\cI(\Xi)))$ & $\frac{3}{2}-3\kappa$ & $\Xi\cI(\Xi\cI(\Xi\cI(\Xi)))$ & $-4\kappa$ \\
$\cI(\Xi X_i)$ & $\frac{3}{2}-\kappa$ & $\Xi\cI(\Xi X_i)$ & $-2\kappa$\\
\bottomrule
\end{tabular}\end{center}
\caption{The canonical basis vectors for the regularity structure for (\ref{e:E}) with $\gamma \in (3/2,2-4\kappa)$. Notice that here $i$ ranges in $\{1,\ldots,d\}$, while $X_0$ has homogeneity $2$ and therefore does not appear.}
\end{figure}

The regularity structure $\cT(\cU)$ is the abstract framework to which the solution $u$ 
of (\ref{e:E}) will be lifted. $\cT(\cF)$, which is simply obtained by multiplying all the elements in 
$\cT(\cU)$ by $\Xi$, will therefore allow us to lift $u\cdot\xi$. It turns out that it will suffice 
to restrict $\cT(\cU)$ to those homogeneities lower than a certain threshold $\gamma > 0$, to 
be fixed later on. Similarly, we will restrict $\cT(\cF)$ to those homogeneities lower than 
$\gamma+\alpha > 0$. We will write $\cT_{<\gamma}(\cU)$ and $\cT_{<\gamma+\alpha}(\cF)$ to 
denote these two subspaces, eventually we will omit these subscripts since the restriction 
will be clear from the context.  Finally, we let $\cQ_\zeta:\cT\rightarrow\cT_\zeta$ 
denote the canonical projection on $\cT_\zeta$ and we denote by $|a|_\zeta$ the norm of $\cQ_\zeta a$.

Let us consider the heat kernel in dimension $d$:
\[ P(t,x) := \frac{1}{(4\pi t)^{\frac{d}{2}}} e^{-\frac{|x|^2}{4t}}\;,\;\;\;x\in\R^d,\; t>0\;.\]
We will need the following decomposition of $P$ into a series of smooth functions, 
which was already used in \cite[Lem.~5.5]{Hairer2014}. Actually, there is a slight 
difference here since we consider the $\s$-scaled supremum norm in $\R^{d+1}$ instead of the 
$\s$-scaled Euclidean norm, but this makes no difference.
\begin{lemma}\label{Lemma:Kernel}
Fix $r > 0$. There exist a collection of smooth functions $P_-, P_n, n\geq 0$ on $\R_+\times\R^d$, such that the following properties hold:
\begin{enumerate}
\item\label{KernelSeries} For every $z\in\R^{d+1}\backslash\{0\}$, $P(z) = \sum_{n\geq 0} P_n(z) + P_-(z)$,
\item\label{KernelScaling} The function $P_0$ is supported in the unit ball, and for every $n\geq 0$, we have
\begin{equ}
P_n(t,x) = 2^{nd}P_0(2^{2n}t,2^n x) \;,\qquad t\in\R_+\;,\quad x\in\R^d\;,
\end{equ}
\item\label{KernelPoly} For every $n\geq 0$, we have $\int_{z} P_n(z) z^k dz = 0$ for all $k\in\N^{d+1}$ such that $|k| \leq r$.
\end{enumerate}
As a consequence, for every $k\in\N^{d+1}$, there exists $C>0$ such that
\begin{equs}\label{Eq:ScalingKernel}
|D^k P_n(z)| \leq C 2^{n(d+|k|)}\;,
\end{equs}
uniformly over all $n\geq 0$ and all $z\in \R^{d+1}$.
\end{lemma}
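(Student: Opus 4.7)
The plan is to construct the $P_n$ via a parabolic Littlewood--Paley--type dyadic decomposition of $P$, and then to apply a scale-invariant correction to each piece so that the vanishing-moments condition~(3) holds. To begin with, I would fix a smooth cutoff $\chi:[0,\infty)\to[0,1]$ with $\chi\equiv 1$ on $[0,1/2]$ and $\chi\equiv 0$ on $[1,\infty)$, and set $\phi_0(z) := \chi(\|z\|_\s) - \chi(2\|z\|_\s)$, which is smooth and supported in the annulus $\{\|z\|_\s\in[1/4,1]\}$. Since $\|\cdot\|_\s$ is parabolically $1$-homogeneous, the rescalings $\phi_n(z) := \phi_0(2^{2n}t, 2^n x) = \chi(2^n\|z\|_\s) - \chi(2^{n+1}\|z\|_\s)$ are supported in $\{\|z\|_\s\in[2^{-n-2},2^{-n}]\}$, and telescoping gives $\sum_{n\geq 0}\phi_n(z) = \chi(\|z\|_\s)$ for every $z\neq 0$. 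Using the scaling $P(2^{-2n}t, 2^{-n}x) = 2^{nd} P(t,x)$ of the heat kernel, the pieces $\tilde P_n(z) := P(z)\phi_n(z)$ are smooth, supported in $\{\|z\|_\s\sim 2^{-n}\}$, and satisfy $\tilde P_n(t,x) = 2^{nd}\tilde P_0(2^{2n}t, 2^n x)$.

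Next I would modify these to achieve~(3). Choose once a family of smooth compactly supported functions $h^{(k)}\colon \R^{d+1}\to\R$, for $|k|\le r$, satisfying $\int z^{k'} h^{(k)}(z)\,dz = \delta_{k,k'}$, obtained by Gram--Schmidt applied to $\{z^k \rho(z)\}_{|k|\leq r}$ for any fixed $\rho \in \cC^\infty_c$ supported in the unit parabolic ball. With $a_k := \int z^k \tilde P_0(z)\,dz$, set $P_0 := \tilde P_0 - \sum_{|k|\leq r} a_k h^{(k)}$; this is smooth, compactly supported in the unit ball, and has vanishing moments up to order $r$. For $n\geq 1$ define $P_n(t,x) := 2^{nd} P_0(2^{2n}t, 2^n x)$, so that~(2) holds by construction. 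A change of variables then gives $\int z^{k'} P_n(z)\,dz = 2^{-n(|k'|+2)}\int w^{k'} P_0(w)\,dw = 0$ for every $|k'|\le r$, yielding~(3) at every scale. The structural point that makes this work, and which is the only real obstacle to the construction, is the need to reconcile~(3) with the rigid scaling~(2); this is resolved by observing that the moments of both $\tilde P_n$ and $h^{(k)}_n(t,x) := 2^{nd} h^{(k)}(2^{2n}t, 2^n x)$ rescale by the common factor $2^{-n(|k|+2)}$, so a single scale-independent choice of correction coefficients $\{a_k\}$ handles all dyadic levels simultaneously.

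Finally, set $P_-(z) := P(z) - \sum_{n\geq 0} P_n(z)$. Expanding the sum yields
\begin{equ}
P_-(z) = P(z)\bigl(1 - \chi(\|z\|_\s)\bigr) + \sum_{|k|\leq r} a_k \sum_{n\geq 0} h^{(k)}_n(z)\;,
\end{equ}
where the first summand is smooth on $\R^{d+1}$ because $P$ is smooth on $\R^{d+1}\setminus\{0\}$ and $1-\chi(\|\cdot\|_\s)$ vanishes near the origin, while for the second summand each $h^{(k)}_n$ is supported in $\{\|z\|_\s\leq 2^{-n}\}\subset\{|t|\leq 2^{-2n}\}$, so at any $(t,x)$ with $t>0$ only finitely many $n$ contribute and the sum is smooth on $\R_+\times\R^d$. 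Pointwise convergence of $\sum_n P_n$ is similarly automatic on $\R^{d+1}\setminus\{0\}$ since only finitely many $P_n$ are nonzero at any such point, establishing~(1). The bound~\eref{Eq:ScalingKernel} follows directly from~(2) and the chain rule: $D^k P_n(z) = 2^{n(d+|k|)}(D^k P_0)(2^{2n}t, 2^n x)$, and $D^k P_0$ is bounded because $P_0$ is smooth with compact support.
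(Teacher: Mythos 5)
Your construction of the $P_n$ themselves is fine: the dyadic pieces $\tilde P_n$ of the heat kernel are exact parabolic rescalings of one another, and correcting each one by rescaled bumps carrying its own moments preserves both the exact scaling~(2) and the vanishing moments~(3). (The paper does not spell out a proof but appeals to \cite[Lem.~5.5]{Hairer2014}, whose construction is of this general type.) The genuine gap is in your remainder. You have $P_- = P\,(1-\chi(\|\cdot\|_\s)) + \sum_{|k|\le r} a_k \sum_{n\ge 0} h^{(k)}_n$, and the second sum is only \emph{locally finite away from the origin}: at a point with $\|z\|_\s \approx 2^{-m}$ all scales $n\lesssim m$ contribute, each of size up to $C\,2^{nd}$, so your $P_-$ behaves like $\|z\|_\s^{-d}$ near $z=0$ (there is no reason for cancellation; already $a_0=\int \tilde P_0>0$). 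In other words, your decomposition pushes part of the heat-kernel singularity back into $P_-$. The whole point of the lemma is that $P_-$ is the \emph{smooth} part: in Proposition \ref{Prop:IntegrationSmooth} one pairs $\cR f$ with $D^kP_-\big((t,x)-\cdot\big)$ and needs uniform $\cC^r$ bounds on its localisations, including the piece containing the diagonal $(s,y)\approx(t,x)$; with a remainder that is unbounded at the origin these pairings and bounds fail. So while your $P_-$ is literally smooth on $\{t>0\}$, it does not satisfy the property the lemma is meant to (and is later used to) provide.

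The fix is the standard telescoping trick rather than per-scale corrections. Let $c_{n,k}:=\sum_{m\ge n}\int z^k\,\tilde P_m(z)\,dz$ (which equals $a_k\,2^{-n(2+|k|)}/(1-2^{-(2+|k|)})$ by exact scaling), set $M_n:=\sum_{|k|\le r} c_{n,k}\,\eta^{(k)}_n$ with $\eta^{(k)}_n$ the rescaling at scale $2^{-n}$ of a fixed bump family with $\int z^{k'}\eta^{(k)}=\delta_{k,k'}$, and define $P_n:=\tilde P_n - M_n + M_{n+1}$. Each $P_n$ then kills polynomials of degree $\le r$, the exact self-similarity of the heat kernel makes $M_n$ an exact rescaling of $M_0$ so that property~(2) still holds, and the corrections cancel between consecutive scales: the remainder becomes $P_- = P\,(1-\chi(\|\cdot\|_\s)) + M_0$, a single scale-one smooth compactly supported correction, hence genuinely smooth across the origin. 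Two smaller points: choose the bumps $\eta^{(k)}$ (equivalently your $h^{(k)}$) supported in $\{t>0\}$ inside the unit ball, so that every $P_n$ remains supported in positive times --- this is used later when $P_n\big((t,x)-\cdot\big)$ is treated as an element of $\cB^r_-$, i.e.\ supported in the past of $t$; and note that property~(1) then again holds trivially by defining $P_-$ as the difference, with the series locally finite away from the origin.
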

We will use the notation $P_+=\sum_{n\geq 0} P_n$.

From now on, we deal with $\cT_{<\gamma}$ for a given $\gamma$ that will be fixed later on. To simplify notation, we will omit the subscript $\gamma$. We now associate to our regularity structure $(\cT,\cG)$ some analytical features. To that end, recalling the definition of the sets of test functions $\cB^r$ in Section~\ref{sec:notations},
we introduce a set of \textit{admissible models} $\cM$. 

\begin{definition}\label{def:admissible}
An admissible model is a pair $(\Pi,\Gamma)$ that satisfies the following assumptions:
\begin{enumerate}
\item The map $\Pi \colon z\mapsto\Pi_z$ goes from $\R^{d+1}$ into the space $L(\cT,\cD'(\R^{d+1}))$ 
of linear transformations from $\cT$ into 
distributions on space-time $\cD'(\R^{d+1})$ such that
\begin{equs}\label{Eq:BoundPi}
\left\| \Pi\right\|_z  &:= \sup_{\eta\in\cB^r}\sup_{\lambda\in(0,1]}\sup_{\zeta\in\cA}\sup_{\tau\in \cT_\zeta}\frac{|(\Pi_{z} \tau)\big(\eta^\lambda_{z}\big)|}{|\tau| \, \lambda^{\zeta}} \lesssim 1\;,
\end{equs}
locally uniformly over $z \in \R^{d+1}$, for some fixed $r > |\alpha|$. We then define $\left\| \Pi \right\|_B$ as the supremum of $\left\| \Pi \right\|_z $ over all $z \in B$, where $B$ is a given subset of $\R^{d+1}$.
\item The map $\Gamma\colon (z,z') \mapsto \Gamma_{z,z'}$ goes from $\R^{d+1}\times\R^{d+1}$ into $\cG$. It is such that
\begin{equs}\label{Eq:BoundGamma}
\left\| \Gamma \right\|_{z,z'} &:= \sup_{\beta\leq \zeta}\sup_{\tau\in \cT_\zeta}\,\frac{|\Gamma_{z,z'} \tau|_{\beta}}{|\tau| \,\|z-z'\|_{\s}^{\zeta-\beta}} \lesssim 1\;,
\end{equs}
locally uniformly over $z,z' \in \R^{d+1}$ such that $\|z-z'\|_{\s}\leq 1$. We let $\left\| \Gamma \right\|_B := \sup_{z,z'\in B}\left\| \Gamma \right\|_{z,z'}$ for any $B\subset\R^{d+1}$.
\item For every $z,z' \in \R^{d+1}$
\begin{equs}\label{Eq:AlgPpty}
\Pi_{z} \Gamma_{z,z'} = \Pi_{z'}\;.
\end{equs}
\item For every $k\in\N^{d+1}$ we have the identities
\begin{equs}
(\Pi_{z} X^k)(z') &= (z'-z)^k \;, \label{Eq:CondModel2}\\
(\Pi_{z}\cI\tau)(z') &= \big\langle \Pi_{z}\tau , P_+\big(z'-\cdot\big)\big\rangle - \sum_{|k| < |\cI\tau|} \frac{\big(z'-z\big)^k}{k!}\big\langle \Pi_{z}\tau , D^k P_+\big(z -\cdot\big)\big\rangle\;.
\end{equs}
\end{enumerate}
\end{definition}

\begin{remark}
It is not clear a priori that the last point in this definition makes sense, since $P_+$ is not a smooth
test function. One should interpret expressions of the type $\scal{\mu,P_+}$ for a distribution $\mu$ as a shorthand for
$\sum_{n \ge 0}\scal{\mu,P_n}$ (and similarly for expressions involving $D^kP_+$). The bound \eqref{Eq:BoundPi}
then guarantees that these sums converge absolutely.
\end{remark}

The mere existence of non-trivial admissible models is not obvious. However, it turns out that every smooth function
$\xi_\eps$ can be lifted in a canonical way to an admissible model $(\Pi^{(\eps)}, \Gamma^{(\eps)})$ 
by setting
\begin{equs}
(\Pi^{(\eps)}_z \Xi)(z') = \xi_\epsilon(z') \;,\qquad (\Pi^{(\eps)}_z \tau \bar\tau)(z') = (\Pi^{(\eps)}_z \tau)(z')(\Pi^{(\eps)}_z\bar\tau)(z') \;,\quad \forall \tau,\bar\tau\in \cT\;,
\end{equs}
and then imposing (\ref{Eq:CondModel2}). Observe that all the products 
appearing in this definition are well-defined since $\xi_\epsilon$ is a function. 
It was shown in \cite[Prop~8.27]{Hairer2014} that this is indeed an admissible model and we will henceforth
refer to this model as the ``canonical model'' associated to $\xi_\eps$.

\begin{notation}
From now on, instead of writing $\Gamma_{(t,x) , (t,y)}$, we will simply write $\Gamma_{x,y}^t$. Similarly, we will write $\Gamma_{t,s}^x$ instead of $\Gamma_{(t,x), (s,x)}$.
\end{notation}

\subsection{The reconstruction theorem in a Besov-type space}\label{sec:Besov}

In order to build solution to our SPDEs, we need to introduce appropriate spaces of distributions. 
For the moment, we consider un-weighted spaces for the sake of clarity, but we will consider weighted versions later on. We refer the reader to Section \ref{sec:notations} for the notations.
\begin{definition}\label{Def:HolderLp}
Let $\alpha < 0$ and $p\in [1,\infty]$. We let $\cE^{\alpha,p}$ be the space of distributions $f$ on $\R^{d+1}$ such that
\begin{equ}
\| f \|_{\alpha,p} := \sup_{\lambda \in (0,1]} \sup_{t\in\R} \bigg\| \sup_{\eta \in \cB^r(\R^{d+1})} \frac{|\langle f , \eta^\lambda_{t,x} \rangle|}{\lambda^\alpha} \bigg\|_{L^p(\R^d,dx)} < \infty\;.
\end{equ}
\end{definition}
When $p=\infty$, we implicitly consider the supremum-norm instead of the $L^\infty$-norm. In that case, $\cE^{\alpha,\infty}$ actually coincides with the H\"older space $\cC^\alpha(\R^{d+1})$: this can be deduced from the forthcoming wavelet characterisation of Proposition \ref{Prop:CharactDistrib} which coincides with the wavelet characterisation of $\cC^\alpha(\R^{d+1})$ stated, for instance, in~\cite[Section 6.10]{Meyer}. In the case $p<\infty$, our space $\cE^{\alpha,p}$ does not coincide with the usual Besov space $B^\alpha_{p,\infty}$ for our special treatment of the time variable: again, this can be seen by comparing the wavelet characterisations of these two spaces.\\
In order to deal with random distributions, it is more convenient to have a countable characterisation of the spaces $\cE^{\alpha,p}$. To that end, we rely on a wavelet analysis that we briefly summarise below; we refer to the works of Meyer~\cite{Meyer} and Daubechies~\cite{Ingrid} for more details on wavelet analysis.
\paragraph{Wavelet analysis.} For every $r>0$, there exists a compactly supported function $\phi\in\cC^r(\R)$ such that:
\begin{enumerate}
\item We have $\langle\phi(\cdot),\phi(\cdot-k)\rangle=\delta_{k,0}$ for every $k\in\Z$,
\item There exist $\tilde{a}_k,k\in\Z$ with only finitely many non-zero values, and such that $\phi(x)=\sum_{k\in\Z}\tilde{a}_k\phi(2x-k)$ for every $x\in\R$,
\item For every polynomial $P$ of degree at most $r$ and for every $x \in \R$,
\begin{equ}
\sum_{k\in\Z}\int P(y) \phi(y-k) dy \,\phi(x-k) = P(x) \;.
\end{equ}
\end{enumerate}
Given such a function $\phi$, we define for every $(t,x)\in\R^{d+1}$ the recentered and rescaled function $\phi_{t,x}^n$ as follows
\[ \phi_{t,x}^n(s,y)=2^{n}\phi\big(2^{2n}(s-t)\big)\prod_{i=1}^{d}2^{\frac{n}{2}}\phi\big(2^n(y_i-x_i)\big) \;.\]
Observe that this rescaling preserves the $L^2$-norm. We let $V_n$ be the subspace of $L^2(\R^{d+1})$ generated by $\{\phi_{t,x}^n:(t,x)\in\Lambda_n\}$ where
\[ \Lambda_n:=\big\{(2^{-2n}k_0,2^{-n}k_1,\ldots,2^{-n}k_d):k_i\in\Z\big\} \;.\]
Using the second property above, we deduce that
\begin{equs}\label{Eq:Waveletak}
\phi_{t,x}^n = \sum_k a_k \phi^{n+1}_{(t,x)_{n,k}}\;,\qquad (t,x)_{n,k} = (t,x)+k2^{-(n+1)}\;,
\end{equs}
where 
only finitely many of the $a_k$'s are non-zero, and for every $k\in\Z^{d+1}$
\begin{equ}
k2^{-(n+1)} = (k_0 2^{-2(n+1)},k_1 2^{-(n+1)},\ldots, k_d 2^{-(n+1)})\;.
\end{equ}
Using the third property above, we deduce that for every $n\geq 0$, $V_n$ contains all polynomials of scaled degree less or equal to $r$. 

Another important property of wavelets is the existence of a finite set $\Psi$ of compactly supported functions in $\cC^r$ such that, for every $n\geq 0$, the orthogonal complement of $V_n$ inside $V_{n+1}$ is given by the linear span of all the $\psi^n_x, x\in \Lambda_n, \psi\in\Psi$. Necessarily, by the third property above,
each of the functions $\psi\in\Psi$ annihilates all polynomials of $\s$-scaled degree less than or equal to $r$. Finally, for every $n\geq 0$
\[ \{\phi_{t,x}^n:(t,x)\in\Lambda_n\} \cup \{\psi_{t,x}^m:m\geq n, \psi\in\Psi, (t,x)\in\Lambda_m\} \;,\]
forms an orthonormal basis of $L^2(\R^{d+1})$. 

This wavelet analysis allows one to identify a countable collection of conditions that determines the regularity of a distribution. The next proposition is in the flavour of classical results on the characterisation of Besov spaces in terms of a wavelet analysis, we refer the reader to~\cite{FraJaw,Meyer} among other references.

\begin{proposition}\label{Prop:CharactDistrib}
Let $\alpha<0$, $p\in[1,\infty]$ and $r > |\alpha|$. Let $\xi$ be a distribution on $\R^{d+1}$. Set $a^{n,\psi}_{t,x} := \langle \xi,\psi_{t,x}^{n}\rangle$ for all $(t,x) \in \Lambda_n$, $n\geq 1$, $\psi\in\Psi$ as well as $b^0_{t,x} := \langle\xi,\phi_{t,x}\rangle$ for all $(t,x) \in \Lambda_0$. If $\xi \in \cE^{\alpha,p}$, then we have the bounds
\begin{equs}[e:characterisationEap]
\sup_{\psi\in\Psi}\sup_{n\in\N}\sup_{t\in 2^{-2n}\Z} \bigg(\sum_{x:(t,x)\in \Lambda_n} 2^{-nd} \Big|\frac{a^{n,\psi}_{t,x}}{2^{-\frac{n|\s|}{2}-n\alpha}}\Big|^p\bigg)^{\frac{1}{p}} &< \infty\;,\\
\sup_{t\in \Z}\bigg(\sum_{x:(x,t)\in\Lambda_0}\big|b^0_{t,x} \big|^p\bigg)^{\frac{1}{p}} &< \infty\;.
\end{equs}
Conversely, to any sequences $a^{n,\psi}_{t,x}$ and $b^0_{t,x}$ satisfying the bounds (\ref{e:characterisationEap}), one can associate a distribution $\xi \in \cE^{\alpha,p}$ by setting
\begin{equ}[e:defxiWavelets]
\xi := \sum_{\psi\in\Psi}\sum_{n\geq 0}\sum_{(t,x)\in\Lambda_n} a^{n,\psi}_{t,x} \psi^n_{t,x} + \sum_{(t,x)\in\Lambda_0} b^0_{t,x} \phi_{t,x}\;.
\end{equ}
\end{proposition}

%
%

\begin{remark}
As an immediate consequence of this result, we have a continuous embedding of $\cE^{\alpha,p}$ into $\cE^{\alpha-\frac{d}{p},\infty}$, for every $p\in[1,\infty)$.
\end{remark}
\begin{proof}
The case $p=\infty$ is covered by Proposition 3.20 in~\cite{Hairer2014}. Let us adapt the proof for the case 
$p\in[1,\infty)$. If $\xi\in\cE^{\alpha,p}$, then it is immediate to see that the bounds \eqref{e:characterisationEap} 
are satisfied, using the simple fact that for any $(s,y)$ lying in the parabolic hypercube of 
sidelength $2^{-n}$ centred around $(t,x) \in \Lambda_n$, the function $\psi_{t,x}^{n}$ is of the form 
$\eta^{\lambda}_{s,y}$ with $\lambda = 2^{-n}$, up to a constant multiplicative factor 
of the order $2^{-{n|\s|\over 2}}$. This allows in particular to turn the $L^p$ norm in space
into an $\ell^p$ norm at the expense of the corresponding volume factor.

Let us now prove the more difficult converse implication. For $\lambda \in (0,1]$, let 
$n_0\geq 0$ be the largest integer such that $2^{-n_0}\geq\lambda$. We need to show that the series
\begin{equs}
\sum_{\psi\in\Psi}\sum_{n\geq 0}\sum_{(s,y)\in\Lambda_n} a^{n,\psi}_{s,y} \langle \psi^n_{s,y},\eta^\lambda_{t,x}\rangle + \sum_{(s,y)\in\Lambda_0} b^0_{s,y} \langle \phi_{s,y},\eta^\lambda_{t,x}\rangle \;,
\end{equs}
converges for any test function $\eta\in\cB^r$ and any $\lambda \in (0,1]$, and that the bound of Definition \ref{Def:HolderLp} is fulfilled. Once this is established, it is simple to check that (\ref{e:defxiWavelets}) defines a distribution $\xi$ (necessarily in $\cE^{\alpha,p}$) and that the sequences $a^{n,\psi}_{t,x}$ and $b^0_{t,x}$ coincide with the coefficients of $\xi$ on the wavelet basis.

If $n\geq n_0$, we use the fact that $\psi$ kills polynomials of degree $r$ to get the bound
\begin{equs}
\sup_{\eta\in\cB^r}|\langle \psi^n_{s,y},\eta^\lambda_{t,x}\rangle| \lesssim 2^{-(n-n_0)(r+\frac{|\s|}{2}) + n_0\frac{|\s|}{2}}\;,
\end{equs}
uniformly over all the parameters. Observe that the left hand side actually vanishes as soon as $\|(t-s,x-y)\|_{\s}\ge C 2^{-n_0}$, for some positive constant $C$ that only depends on the size of the support of $\psi$. For a given $(t,x)\in\R^{d+1}$, there are at most $2^{2(n-n_0)}$ such $s$'s in $2^{-2n}\Z$, and $2^{d(n-n_0)}$ such $y$'s in $2^{-n}\Z^d$. Consequently, using Jensen's inequality at the third line we obtain
\begin{equs}
\bigg\|\sum_{(s,y)\in\Lambda_n} &\sup_{\eta\in\cB^r}\frac{\big|a^{n,\psi}_{s,y} \langle \psi^n_{s,y},\eta^\lambda_{t,x}\rangle\big|}{\lambda^\alpha}\bigg\|_{L^p(dx)}\\
&\lesssim \sup_{\substack{s\in 2^{-2n}\Z\\ |t-s| \leq C 2^{-2n_0}}}\bigg\|\sum_{\substack{y:(s,y)\in\Lambda_n\\ |x-y| \leq C 2^{-n_0}}} 
\frac{|a^{n,\psi}_{s,y}|}{\lambda^\alpha} 2^{-(n-n_0)(r+d)+n\frac{|\s|}{2}}\bigg\|_{L^p(dx)}\\
&\lesssim \sup_{s\in 2^{-2n}\Z} \Big(\sum_{y:(s,y)\in\Lambda_n}2^{-nd} \Big|\frac{a^{n,\psi}_{s,y}}{2^{-\frac{n|\s|}{2}-n\alpha}}\Big|^p \Big)^{\frac{1}{p}}2^{-(n-n_0)(r+\alpha)}\;,
\end{equs}
uniformly over all $t\in\R$ and all $n\geq n_0$. Therefore, since $r$ was chosen sufficiently large
so that $r+\alpha > 0$, the sum over $n\geq n_0$ converges.

On the other hand, for $n<n_0$, we have the bound
\begin{equs}
\sup_{\eta\in\cB^r}|\langle \psi^n_{s,y},\eta^\lambda_{t,x}\rangle| \lesssim 2^{n\frac{|\s|}{2}}\;,
\end{equs}
uniformly over all the parameters. Moreover, the left hand side vanishes as soon as $\|(t-s,x-y)\|_{\s} > C2^{-n}$. Consequently, only a finite number of $(s,y)\in\Lambda_n$ yield a non-zero contribution, uniformly over all $(t,x)\in\R^{d+1}$ and all $n<n_0$. An elementary computation using Jensen's inequality gives the bound
\begin{equs}
\bigg\|\sum_{(s,y)\in\Lambda_n} \sup_{\eta\in\cB^r} &\frac{\big|a^{n,\psi}_{s,y} \langle \psi^n_{s,y},\eta^\lambda_{t,x}\rangle\big|}{\lambda^\alpha}\bigg\|_{L^p(dx)}\\
&\lesssim \sup_{s\in 2^{-2n}\Z}\Big(\sum_{y:(s,y)\in\Lambda_n}2^{-nd} \Big|\frac{a^{n,\psi}_{s,y}}{2^{-n\frac{|\s|}{2}-n\alpha}}\Big|^p\Big)^{\frac{1}{p}} 2^{-(n-n_0)\alpha}\;,
\end{equs}
uniformly over all $n<n_0$ and all $t\in\R$. The sum over all $n<n_0$ of the last expression is therefore uniformly bounded in $n_0$ and $t$.
Finally, the contribution of the $\phi_{s,y}$'s is treated similarly as the case $n<n_0$.
\end{proof}
Given a regularity structure $(\cT,\cG)$ and a model $(\Pi,\Gamma)$, we now define a space of 
modelled distributions which mimics the space $\cE^{\alpha,p}$.
\begin{definition}
Let $\gamma >0$ and $p\in[1,\infty)$. The space $\cD^{\gamma,p}$ consists of those maps $f:\R^{d+1}\rightarrow\cT_{<\gamma}$ such that
\begin{equs}
\Big\||f(t,x)|_\zeta\Big\|_{L^p(\R^d,dx)}
&+ \bigg\|\int_{y\in B(x,\lambda)}\frac{|f(t,y)-\Gamma_{y,x}^t f(t,x)|_\zeta}{\lambda^{\gamma-\zeta}}\, \lambda^{-d} dy\bigg\|_{L^p(\R^d,dx)}\\
&+ \bigg\|\frac{|f(t,x)-\Gamma_{t,t-\lambda^2}^x f(t-\lambda^2,x)|_\zeta}{\lambda^{\gamma-\zeta}}\bigg\|_{L^p(\R^d,dx)} < \infty\;,
\end{equs}
uniformly over all $t\in\R$, all $\zeta \in \cA$ and all $\lambda\in (0,2]$. We denote by 
$\|f\|_{\gamma,p}$ the corresponding norm.
\end{definition}
For all $B\subset \R^{d+1}$ of the form $[s,t]\times B(x_0,L)$, we will use the notation $\| f \|_B$ to denote the supremum of the terms appearing in the $\cD^{\gamma,p}$-norm of $f$, but with the additional constraint that the time indices are restricted to $[s,t]$ and the $L^p(\R^d)$-norms are replaced by the $L^p$-norm on the ball $B(x_0,L)$.

\begin{remark}
Our spaces $\cD^{\gamma,p}$ are the $L^p$ counterparts of the space $\cD^{\gamma,\infty}=\cD^{\gamma}$ from 
\cite[Def.~3.1]{Hairer2014}. Notice also that, just as in the definition of $\cE^{\alpha,p}$, we treat space and time translations
separately: this will be useful in the weighted setting later on. 
\end{remark}

The definition of the space $\cD^{\gamma,p}$ depends implicitly on the underlying model through $\Gamma$. In order to compare two elements $f\in\cD^{\gamma,p}$ and $\bar{f}\in\bar{\cD}^{\gamma,p}$ associated to two models $(\Pi,\Gamma)$ and $(\bar{\Pi},\bar{\Gamma})$, we introduce $\|f;\bar{f}\|_{\gamma,p}$ as the supremum of
\begin{equs}
{}&\Big\||f(t,x)-\bar{f}(t,x)|_\zeta\Big\|_{L^p(dx)}\\
+&\Big\|\int_{y\in B(x,\lambda)}\frac{|f(t,y)-\bar{f}(t,y)-\Gamma_{y,x}^tf(t,x)+\bar{\Gamma}_{y,x}^t\bar{f}(t,x)|_\zeta}{\lambda^{\gamma-\zeta}}\,\lambda^{-d}dy\Big\|_{L^p(dx)}\\
+& \Big\|\frac{|f(t,x)-\bar{f}(t,x)-\Gamma_{t,t-\lambda^2}^xf(t-\lambda^2,x)+\bar{\Gamma}_{t,t-\lambda^2}^x\bar{f}(t-\lambda^2,x)|_\zeta}{\lambda^{\gamma-\zeta}}\Big\|_{L^p(dx)} \;,
\end{equs}
over all $t\in\R$, all $\zeta \in \cA$ and all $\lambda\in (0,2]$.

\medskip

The following result shows that these modelled distributions can actually be \textit{reconstructed} into genuine distributions. This is a modification of Theorem 5.12 in~\cite{Hairer2014}. For any function $g:\R^d\rightarrow\R$ and any $x_0\in\R^d$, we use the notation
\begin{equ}
\|g\|_{L^p_{x_0,1}} = \Big( \int_{x\in B(x_0,1)} |g(x)|^p dx\Big)^{\frac{1}{p}}\;.
\end{equ}
\begin{theorem}[Reconstruction]\label{Th:Reconstruction}
Let $(\cT,\cG,\cA)$ be a regularity structure. Let $\gamma > 0$, $p\in[1,\infty)$, $\alpha:=\min \cA < 0$, $r>|\alpha|$ and $(\Pi,\Gamma)$ be a model. There exists a unique continuous linear map $\cR:\cD^{\gamma,p}\rightarrow\cE^{\alpha,p}$ such that
\begin{equs}\label{Eq:BoundReconstructionp}
\bigg\| \sup_{\eta\in\cB^r}\big|\langle\cR f - \Pi_{t,x} f(t,x) , \eta_{t,x}^\lambda \rangle\big| \bigg\|_{L^p_{x_0,1}}  \lesssim \lambda^\gamma C_{t,x_0,\lambda}(\Pi,f)\;,
\end{equs}
uniformly over all $\lambda\in(0,1]$, all $(t,x_0)\in\R^{d+1}$, all $f\in\cD^{\gamma,p}$ and all admissible models $(\Pi,\Gamma)$. Here the proportionality constant can be given by
\begin{equ}[Eq:RefinedBoundReconstructionp]
C_{t,x_0,\lambda}(\Pi,f) = \sum_{2^{-n} \leq \lambda} \Big(\frac{2^{-n}}{\lambda}\Big)^{\gamma\wedge(r+\alpha)} \| \Pi \|_{B^n_{\lambda,t,x_0}}\big(1+\| \Gamma \|_{B^n_{\lambda,t,x_0}}\big) \|f \|_{B^n_{\lambda,t,x_0}}\;,
\end{equ}
with $B^n_{\lambda,t,x_0} = \big[t-2\lambda^2,t+\lambda^2-2^{-2n}\big] \times B(x_0,3)$.

If $(\bar{\Pi},\bar{\Gamma})$ is a second model for $\cT$ and if $\bar{\cR}$ is its associated reconstruction operator, then one has the bound
\begin{equs}[Eq:BoundReconstructionTwo]
\bigg\| \sup_{\eta\in\cB^r}\big|&\langle \cR f - \bar{\cR}\bar{f} - \Pi_{t,x} f(t,x) + \bar{\Pi}_{t,x}\bar{f}(t,x) , \eta_{t,x}^\lambda \rangle\big|^p \bigg\|_{L^p_{x_0,1}}\\
&\lesssim \lambda^\gamma C_{t,x_0,\lambda}(\Pi,\bar\Pi,f,\bar f)\;,
\end{equs}
uniformly over all $\lambda\in(0,1]$, all $f\in\cD^{\gamma,p}$, all $\bar{f}\in\bar{\cD}^{\gamma,p}$, all $(t,x_0)\in\R^{d+1}$ and all admissible models $(\Pi,\Gamma)$, $(\bar\Pi,\bar\Gamma)$. Here, the proportionality constant is obtained from (\ref{Eq:RefinedBoundReconstructionp}) by replacing $\| \Pi \|_{B^n_{\lambda,t,x_0}} \big(1+\| \Gamma \|_{B^n_{\lambda,t,x_0}}\big)\|f \|_{B^n_{\lambda,t,x_0}}$ by
\begin{equs}[Eq:BoundNormModels]
\| \Pi \|_{B^n}&(1+\| \Gamma \|_{B^n}) \|f;\bar f \|_{B^n}\\
&+\big(\| \Pi-\bar \Pi \|_{B^n}(1+\| \Gamma \|_{B^n})+\| \bar \Pi \|_{B^n}\| \Gamma-\bar\Gamma \|_{B^n}\big)\|\bar f \|_{B^n}\;,
\end{equs}
with $B^n=B^n_{\lambda,t,x_0}$ as defined above.
\end{theorem}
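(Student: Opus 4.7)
The plan is to mimic the strategy of \cite[Thm.~3.10]{Hairer2014}, namely to construct $\cR f$ via its wavelet coefficients, but to track the $L^p$ (spatial) integrability throughout instead of the uniform pointwise bounds used in the H\"older case. For each scale $n \ge 0$, I would set
\begin{equ}
\cR_n f := \sum_{z \in \Lambda_n} \scal{\Pi_{z} f(z), \phi_z^n}\, \phi_z^n \in V_n\;,
\end{equ}
using the local model action at the grid point $z=(t,x)$. The candidate $\cR f$ will then be obtained as the limit of $\cR_n f$, and I would certify membership in $\cE^{\alpha,p}$ via Proposition~\ref{Prop:CharactDistrib}, so the task reduces to estimating the wavelet coefficients
\begin{equ}
\scal{\cR f, \psi_z^n} \quad\text{and}\quad \scal{\cR f, \phi_z^0}
\end{equ}
in the $\ell^p$-in-space norm.

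The key telescoping estimate controls the increment $\cR_{n+1} f - \cR_n f$. Expanding $\phi_z^n$ into finer wavelets via \eqref{Eq:Waveletak}, one sees that the coefficient of $\psi_z^n$ in $\cR_{n+1} f - \cR_n f$ is a finite linear combination, indexed by the nearby dyadic centres $y$, of $\scal{(\Pi_{y} f(y) - \Pi_{z} \Gamma_{z,y} f(y)) , \psi_z^n}$ which, using the algebraic identity $\Pi_y = \Pi_z \Gamma_{z,y}$ and the bound \eqref{Eq:BoundPi} together with the vanishing-moment property of $\psi$, is estimated by $\|f(y) - \Gamma_{y,z} f(z)\|$ tested against a scale-$2^{-n}$ object. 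The $\cD^{\gamma,p}$ condition then gives an $L^p(dx)$ bound of order $2^{-n\gamma - n|\s|/2}$ on this coefficient (after converting sums over $y\in\Lambda_n$ into spatial integrals, as in the proof of Proposition~\ref{Prop:CharactDistrib}). Summing the telescoping series over $n\ge 0$ produces $\cR f$ together with the required $\cE^{\alpha,p}$ control, and a parallel argument handles the coarse scale $\phi_z^0$ coefficients.

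For the quantitative bound \eqref{Eq:BoundReconstructionp}, I would decompose the test function $\eta^\lambda_{t,x}$ into the wavelet basis. Choosing $n_0$ with $2^{-n_0} \asymp \lambda$, the scales $n < n_0$ contribute to $\Pi_{t,x} f(t,x)$ (these are polynomial-type components that $\Pi_{t,x} f(t,x)$ already reproduces up to errors controlled by $\cD^{\gamma,p}$), while the scales $n \ge n_0$ give the error term; each such scale contributes $\lambda^{-|\s|} 2^{-n(\gamma\wedge(r+\alpha))}$ times the $L^p$ norm of the increment computed above, restricted to those $z$ whose associated wavelet touches $B(t,x,\lambda)$. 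Summing over $n \ge n_0$ yields the factor $\lambda^\gamma$, while the geometric decay $(2^{-n}/\lambda)^{\gamma\wedge(r+\alpha)}$ and the use of local seminorms $\|\Pi\|_{B^n_{\lambda,t,x_0}}$ etc.\ with the ball of radius $3$ (to accommodate the supports of the wavelets touching $B(x_0,1)$) give the refined constant \eqref{Eq:RefinedBoundReconstructionp}; the slightly enlarged time interval $[t-2\lambda^2, t+\lambda^2-2^{-2n}]$ reflects the parabolic support of $\psi_z^n$ for centres relevant to scale $n$. Uniqueness is immediate: if $\cR f$ and $\cR' f$ both satisfy \eqref{Eq:BoundReconstructionp}, then their difference $g := \cR f - \cR' f \in \cE^{\alpha,p}$ satisfies $\|\scal{g,\eta^\lambda_{t,\cdot}}\|_{L^p_{x_0,1}} \lesssim \lambda^{\gamma}$, which for $\gamma > \alpha$ forces $g = 0$ by testing against wavelets and using the characterisation of Proposition~\ref{Prop:CharactDistrib}.

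The two-model bound \eqref{Eq:BoundReconstructionTwo} is obtained by running the same argument on the difference $\cR f - \bar\cR \bar f$; the increment at scale $n$ now splits into three pieces corresponding to $f - \bar f$ (tested against $\Pi$), $\bar f$ tested against $\Pi - \bar\Pi$, and a piece involving $\Gamma - \bar\Gamma$ via the identity $\Pi_y = \Pi_z \Gamma_{z,y}$, producing exactly the three terms of \eqref{Eq:BoundNormModels}. The main technical difficulty I anticipate lies in the $\ell^p \leftrightarrow L^p$ conversions and the careful localisation: one must ensure that the wavelet sums converging in $\ell^p$ (as in Proposition~\ref{Prop:CharactDistrib}) translate cleanly into the integral $L^p_{x_0,1}$-bounds in the target, and that the supports accumulating as $n$ varies are absorbed into the enlarged ball $B(x_0,3)$ without destroying the local nature of the model seminorms $\|\Pi\|_{B^n_{\lambda,t,x_0}}$. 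This is also where the time-interval shape $[t-2\lambda^2,t+\lambda^2-2^{-2n}]$ comes in, since we separately exploited time and space regularity in the definition of $\cD^{\gamma,p}$ and the supports of $\phi^n_z$, $\psi^n_z$ are not symmetric in time around their centres in the parabolic scaling.
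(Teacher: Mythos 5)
Your overall wavelet strategy (approximants $\cR_n f\in V_n$, a telescoping sum over scales, $\ell^p$-to-$L^p$ conversion via Proposition~\ref{Prop:CharactDistrib}, splitting the scales at $2^{-n_0}\asymp\lambda$) is indeed the skeleton of the paper's argument, but two ingredients the paper's proof depends on are missing, and without them the proposal does not close. First, your coefficients $\langle \Pi_z f(z),\phi^n_z\rangle$ evaluate $f$ at the lattice points $z\in\Lambda_n$, which form a Lebesgue-null set in the space variable; in the $\cD^{\gamma,p}$ setting the norm only controls $f$ and its increments in an $L^p$-averaged sense (the spatial increment term is itself a local average over $y\in B(x,\lambda)$), so these pointwise lattice values are neither controlled by $\|f\|_{\gamma,p}$ nor independent of the representative of $f$. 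The paper circumvents this by defining $A^n_{t,x}$ as a local average, $A^n_{t,x}=\int_{y\in B(x,2^{-n})}2^{nd}\langle \Pi_{t^{\downarrow n},y}f(t^{\downarrow n},y),\varphi^n_{t,x}\rangle\,dy$, and all subsequent $\ell^p$ estimates (on $\delta A^n$ and on $A^n_{s,y}-\langle\Pi_{t,x}f(t,x),\varphi^n_{s,y}\rangle$, via Jensen) exploit precisely this averaged form; your telescoping estimate as written cannot be derived from the $\cD^{\gamma,p}$ norm alone.

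Second, and more importantly for what the theorem is actually for: the refined constant \eqref{Eq:RefinedBoundReconstructionp} with time window $\big[t-2\lambda^2,\,t+\lambda^2-2^{-2n}\big]$ cannot come ``from the parabolic support of $\psi^n_z$'', as you assert. Wavelets whose supports meet that of $\eta^\lambda_{t,x}$ reach times up to $t+\lambda^2+O(2^{-2n})$, so the naive construction would force a window slightly \emph{larger} than the support, exactly as in the original H\"older-space proof. Getting an upper time limit strictly \emph{below} $t+\lambda^2$, with the scale-dependent gap $2^{-2n}$, is achieved in the paper by deliberately evaluating $\Pi$ and $f$ at the shifted time $t^{\downarrow n}=t-(7M^2+1)2^{-2n}$ inside the definition of $A^n_{t,x}$, and then verifying (this is where the auxiliary scales $n_0,n_1$, the sets $\Lambda_n^{t,x,\lambda}$, and the case distinction $s^{\downarrow n}\gtrless t$ in the $J_n$ term enter, with time and space regularity of $f$ used separately) that every evaluation time stays in $[t-2\lambda^2,t+\lambda^2-2^{-2n}]$. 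This refinement is the whole point of restating reconstruction here: it is what later allows weights like $e^{t(1+|x|)}$, which are not comparable across time intervals of length $\lambda^2$, to be taken at time $t+\lambda^2$ in Theorem~\ref{Th:ReconstructionWeight}. A proof that does not build the backward time shift into $\cR_n f$ proves a weaker statement than the one at hand. Your uniqueness argument and the outline for the two-model bound are fine in spirit and match the paper's.
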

To prove this theorem, we adapt the arguments from \cite[Th 3.10]{Hairer2014}. In particular, we obtain $\cR f$ as the limit of a sequence $\cR_n f \in V_n$, where $V_n$ is the subspace of $L^2(\R^{d+1})$ defined by our wavelet analysis. Let us comment on the technical bound (\ref{Eq:RefinedBoundReconstructionp}). Its purpose is to provide a precise control on the time-locations of these values $f(s,y)$ needed to define $\langle\cR f , \eta_{t,x}^\lambda\rangle$. In the original proof of the reconstruction theorem \cite[Th 3.10]{Hairer2014}, these points were taken in a domain slightly larger than the support of the test function $\eta_{t,x}^\lambda$. In the setting with weights, this would only allow us to weigh $\langle\cR f , \eta_{t,x}^\lambda\rangle$ by a weight taken at a time slightly larger than the maximal time of the support of the test function. In our present approach, the values $f(s,y)$ used for the term coming from $\langle \cR_n f , \eta_{t,x}^{\lambda} \rangle$ will always be such that $s<t+\lambda^2-2^{-2n}$. In the setting with weights, this will allow us to weigh $\langle\cR f , \eta_{t,x}^\lambda\rangle$ by a weight taken at time $t+\lambda^2$. We refer to Figure \ref{Fig} for an illustration.

\begin{figure}
\begin{tikzpicture}

\draw[dashed] (-7.0,-1.0) rectangle (-3.0,1.0);
\draw[thick,fill=black!8] (-5.0,0) ellipse (1.5cm and 0.7cm);
\draw [->] (-5,0) node[] {$\times$} -- (-4.5,0) node[below right] {$\lambda$} -- (-3.52,0);
\draw [->] (-7.5,-2.0) -- (-7.5,0) -- (-7.3,0) -- (-7.5,0) node[left] {$t$} -- (-7.5,0.7)  -- (-7.3,0.7) -- (-7.5,0.7) node[left] {$t+\lambda^2$} -- (-7.5,1.5);
\draw [->] (-7.5,-2.0) -- (-5,-2.0) -- (-5,-1.8) -- (-5,-2) node[below] {$x$}  -- (-2.5,-2.0);

\draw[thick,fill=black!8] (1,0) ellipse (1.5cm and 0.7cm);
\draw[dashed] (-1,-1.4) rectangle (3,0.3);
\draw [->] (1,0) node[] {$\times$} -- (1.5,0) node[below right] {$\lambda$} -- (2.48,0);
\draw [->] (-1.5,-2.0) -- (-1.5,0)  -- (-1.3,0) -- (-1.5,0) node[left] {$t$} -- (-1.5,0.7) -- (-1.3,0.7) -- (-1.5,0.7) node[left] {$t+\lambda^2$} -- (-1.5,1.5);
\draw [->] (-1.5,-2.0) -- (1,-2.0) -- (1,-1.8) -- (1,-2) node[below] {$x$}  -- (3.5,-2.0);
\draw [<->] (3.2,0.3) -- (3.2,0.5) node[right] {$2^{-2n}$} -- (3.2,0.7);

\end{tikzpicture}
\caption{\label{Fig}Reconstruction theorem. On the left, the original approach and on the right, the approach presented in our proof. The shaded region depicts the support of a test function $\eta_{t,x}^\lambda$, the dashed box is the domain of the evaluations of the modelled distribution $f$ required to define $\langle \cR_n f , \eta_{t,x}^\lambda \rangle$.}
\end{figure}

The core of the proof rests on the following result. Recall the wavelet analysis introduced above. Let $f_n = \sum_{(t,x)\in\Lambda_n} A^n_{t,x} \varphi^n_{t,x}$ be a sequence of elements in $V_n$ and define $\delta A^n_{t,x} = \langle f_{n+1}-f_n , \varphi^n_{t,x}\rangle$. The following criterion for the convergence of the sequence $f_n$ is an adaptation of Theorem 3.23 in~\cite{Hairer2014}, which in turn can be viewed as a multidimensional generalisation of Gubinelli's sewing lemma~\cite{Gub04}. 
\begin{proposition}\label{Prop:Reconstruction}
Let $\alpha < 0$. Assume that there exists a constant $\|A\|$ such that
\begin{equs}[Eq:HypSewing]
\sup_{n\geq 0}\sup_{t\in 2^{-2n}\Z}\bigg(\sum_{x:(t,x)\in\Lambda_n} 2^{-nd} \Big|\frac{A_{t,x}^n}{2^{-n\frac{|\s|}{2}-n\alpha}}\Big|^p\bigg)^{\frac{1}{p}} &\leq \left\| A\right\|\;,\\
\sup_{n\geq 0}\sup_{t\in 2^{-2n}\Z} \bigg(\sum_{x:(t,x)\in\Lambda_n} 2^{-nd} \Big|\frac{\delta A_{t,x}^n}{2^{-n\frac{|\s|}{2}-n\gamma}}\Big|^p\bigg)^{\frac{1}{p}} &\leq \left\| A\right\| \;.
\end{equs}
Then, the sequence $f_n$ converges in $\cE^{\bar\alpha,p}$ for every $\bar\alpha < \alpha$ to a limit $f\in\cE^{\alpha,p}$. Moreover, the bounds
\begin{equs}\label{Eq:RateReconstr}
\|f-f_n\|_{\bar\alpha,p} \lesssim \|A\| 2^{-n(\alpha-\bar\alpha)} \;\;\;,\;\;\; \|\cP_n f - f_n\|_{\alpha,p} \lesssim \|A\| 2^{-n\gamma} \;,
\end{equs}
hold for $\bar\alpha \in (\alpha-\gamma,\alpha)$.
\end{proposition}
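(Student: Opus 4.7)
The plan is to leverage the wavelet characterization of Proposition~\ref{Prop:CharactDistrib} to identify the limit $f$ directly through its wavelet coefficients. Since $\psi^n_{t,x}\in V_{n+1}$ is orthogonal to $V_n$, the difference $f_{n+1}-f_n \in V_{n+1}$ admits a unique decomposition
\begin{equs}
f_{n+1}-f_n = \sum_{(t,x)\in\Lambda_n}\delta A^n_{t,x}\,\varphi^n_{t,x} + \sum_{\psi\in\Psi}\sum_{(t,x)\in\Lambda_n}B^n_{\psi,t,x}\,\psi^n_{t,x}\;,\quad B^n_{\psi,t,x}:=\langle f_{n+1},\psi^n_{t,x}\rangle\;.
\end{equs}
Writing $\psi^n_{t,x}$ as a finite, uniformly bounded linear combination of the refined scaling functions $\varphi^{n+1}_{(t,x)_{n,j}}$, the new coefficients $B^n_{\psi,t,x}$ are expressible as finite linear combinations of $A^{n+1}$ values, so the first hypothesis of \eqref{Eq:HypSewing} directly yields
\begin{equs}
\sup_{t\in 2^{-2n}\Z}\Big(\sum_{x:(t,x)\in\Lambda_n}2^{-nd}|B^n_{\psi,t,x}|^p\Big)^{1/p}\lesssim\|A\|\,2^{-n|\s|/2 - n\alpha}\;,
\end{equs}
uniformly in $n$ and $\psi\in\Psi$.

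The telescoping $f_m = f_n + \sum_{k=n}^{m-1}(f_{k+1}-f_k)$, combined with the orthogonalities $W_n \perp V_n$ and $W_n \perp W_k$ for $k\neq n$, gives for $m > n$
\begin{equs}
\langle f_m,\psi^n_{t,x}\rangle = B^n_{\psi,t,x} + \sum_{k=n+1}^{m-1}\sum_{(s,y)\in\Lambda_k}\delta A^k_{s,y}\,\langle\varphi^k_{s,y},\psi^n_{t,x}\rangle\;,
\end{equs}
while $\langle f_m,\psi^n_{t,x}\rangle = 0$ when $m \leq n$. The cross-scale pairing satisfies $|\langle\varphi^k_{s,y},\psi^n_{t,x}\rangle| \lesssim 2^{-(k-n)|\s|/2}$ and vanishes unless the two test functions overlap; this restricts $(s,y)\in\Lambda_k$ to a set of cardinality $O(2^{(k-n)|\s|})$ for each $(t,x)\in\Lambda_n$, while each $(s,y)$ overlaps with only $O(1)$ points $(t,x)$. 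A Jensen inequality on the overlapping sum, combined with the hypothesis bound on $\delta A^k$, bounds the $k$-th tail contribution in $\ell^p$ by $\|A\|\,2^{-n|\s|/2 - k\gamma}$; the geometric sum in $k>n$ then gives $\lesssim\|A\|\,2^{-n|\s|/2 - n\gamma}$, which combined with the $B^n_\psi$ estimate produces a total uniform bound $\|A\|\,2^{-n|\s|/2 - n\alpha}$ (using $\gamma > \alpha$). Passing to $m \to \infty$ and treating the base-scale scaling-function coefficients analogously, Proposition~\ref{Prop:CharactDistrib} then identifies $f\in\cE^{\alpha,p}$.

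For the quantitative convergence, the same computations give $\langle f-f_m,\psi^n_{t,x}\rangle = \langle f,\psi^n_{t,x}\rangle$ when $m\leq n$, while for $m>n$ only the tail $\sum_{k\geq m}$ survives and produces the sharper bound $\|A\|\,2^{-n|\s|/2 - m\gamma}$. Multiplying by the $\cE^{\bar\alpha,p}$-weight $2^{n|\s|/2+n\bar\alpha}$, the supremum over $n\in\N$ is attained at $n=m$ (the range $n<m$ being dominated since $\gamma > \alpha-\bar\alpha$), yielding $\|f-f_m\|_{\bar\alpha,p}\lesssim\|A\|\,2^{-m(\alpha-\bar\alpha)}$ for $\bar\alpha\in(\alpha-\gamma,\alpha)$. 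The second rate $\|\cP_n f - f_n\|_{\alpha,p}\lesssim\|A\|\,2^{-n\gamma}$ follows by a parallel argument: since $\cP_n f - f_n \in V_n$ has vanishing wavelet coefficients at all scales $\geq n$, the cross-scale estimate applied at scales $m<n$ (with the roles of coarse and fine exchanged) produces, after summing the geometric tail over $k\geq n$, the required $\gamma$-decay.

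The main technical obstacle is that the scaling function $\varphi$ carries no vanishing moments, so the cross-scale pairing $\langle\varphi^k_{s,y},\psi^n_{t,x}\rangle$ admits no improvement beyond the trivial bound $2^{-(k-n)|\s|/2}$. The geometric decay in $k$ needed for summability across scales emerges only from the delicate interplay between this pairing bound, the overlap combinatorics, the Jensen factor $M^{(p-1)/p}$ with $M=O(2^{(k-n)|\s|})$, and the $\gamma$-decay of $\delta A^k$, all tracked simultaneously inside the $\ell^p$-norm over $(t,x)\in\Lambda_n$. This constitutes an $L^p$-adaptation at the wavelet level of the $L^\infty$ sewing-type argument of \cite[Thm.~3.23]{Hairer2014}.
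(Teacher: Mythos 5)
Your proposal is correct and follows essentially the same route as the paper's proof: your decomposition of $f_{n+1}-f_n$ into the $V_n$-component with coefficients $\delta A^n$ and the orthogonal wavelet component is exactly the paper's splitting $g_n+\delta f_n$, and your two key estimates (the $O(1)$ pairing of $\varphi^{n+1}_{s,y}$ with $\psi^n_{t,x}$ for the $B^n$-coefficients, and the cross-scale bound $2^{-(k-n)|\s|/2}$ with overlap counting and Jensen for the $g_k$-contributions) are the same ones the paper uses to bound $\|\delta f_n\|_{\beta,p}$ and $\|Q_N g_n\|_{\alpha,p}$, with the conclusion drawn from Proposition~\ref{Prop:CharactDistrib} in both cases. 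The only difference is organizational (telescoping the wavelet coefficients of $f_m$ scale by scale rather than summing norm bounds on the increments), and your rate computations, including the role of the condition $\bar\alpha\in(\alpha-\gamma,\alpha)$, check out.
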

Here, $\cP_n$ denotes the orthogonal projection from $L^2(\R^{d+1})$ onto $V_n$. We also write $V_n^{\perp}$ for the orthogonal complement of $V_n$ in $V_{n+1}$. From the wavelet analysis, we know that this is obtained as the linear span of all the $\psi^n_{t,x}$ with $(t,x)\in\Lambda_n$ and $\psi\in\Psi$.
\begin{proof}
Let us write $f_{n+1}-f_n = g_n + \delta f_n$, where $g_n \in V_n$ and $\delta f_n \in V_n^{\perp}$. We bound separately the contributions of these two terms. By Proposition~\ref{Prop:CharactDistrib}, the $\cE^{\beta,p}$ norm 
is equivalent to the supremum over $n\geq 0$ of the $\cE^{\beta,p}$ norms of the projections onto 
$V_n^\perp$ and onto $V_0$. Therefore, the sequence $\sum_{n=0}^M \delta f_n$ converges in $\cE^{\bar{\alpha},p}$ 
as $M \to \infty$ to an element in $\cE^{\alpha,p}$ precisely if
\begin{equs}\label{Eq:CVdeltaf}
\lim_{n\rightarrow\infty} \| \delta f_n \|_{\bar\alpha,p} = 0\;,\quad \sup_{n\geq 0} \| \delta f_n \|_{\alpha,p} < \infty\;.
\end{equs} 
We have
\begin{equs}
\langle \delta f_n , \psi^n_{t,x} \rangle = \sum_{(s,y)\in\Lambda_{n+1}} A^{n+1}_{s,y} \langle \varphi_{s,y}^{n+1} , \psi^n_{t,x} \rangle \;.
\end{equs}
Observe that $|\langle \varphi_{s,y}^{n+1} , \psi^n_{t,x} \rangle| \lesssim 1$ uniformly over all $n\geq 0$, and that the inner product vanishes as soon as $\|(t-s,x-y)\|_{\s} > C 2^{-n}$ for some constant $C>0$ depending on the sizes of the support of $\varphi$ and $\psi$. Hence, for a given $(t,x)$, the number of $(s,y)\in\Lambda_{n+1}$ with a non-zero contribution is uniformly bounded in $n\geq 0$. Therefore, we have
\begin{equs}
\| \delta f_n \|_{\beta,p} &\lesssim \sup_{t\in 2^{-2n}\Z}\bigg( \sum_{x:(t,x)\in\Lambda_n} 2^{-nd} \Big( \sum_{\substack{(s,y)\in\Lambda_{n+1}\\\|(t-s,x-y)\|_{\s}\leq C 2^{-n}}} \frac{|A^{n+1}_{s,y}|}{2^{-n\frac{|\s|}{2}-n\beta}}\Big)^p\bigg)^{\frac{1}{p}}\\
&\lesssim \sup_{t\in 2^{-2n}\Z}\sum_{\substack{s\in 2^{-2(n+1)}\Z\\ |t-s|\leq C^2 2^{-2n}}}\bigg( \sum_{x:(t,x)\in\Lambda_n}\sum_{\substack{y:(s,y)\in\Lambda_{n+1}\\ |x-y|\leq C 2^{-n}}} 2^{-nd} \Big|  \frac{A^{n+1}_{s,y}}{2^{-n\frac{|\s|}{2}-n\beta}}\Big|^p\bigg)^{\frac{1}{p}}\\
&\lesssim \sup_{s\in 2^{-2(n+1)}\Z}2^{-n(\alpha-\beta)} \bigg( \sum_{y:(s,y)\in\Lambda_{n+1}} 2^{-(n+1)d} \Big|\frac{A^{n+1}_{s,y}}{2^{-n\frac{|\s|}{2}-n\alpha}}\Big|^p\bigg)^{\frac{1}{p}}\;,\\
\end{equs}
so that (\ref{Eq:CVdeltaf}) follows from (\ref{Eq:HypSewing}). Moreover, this yields the bound
\begin{equ}
\Big\| \sum_{n=m}^\infty\delta f_n \Big\|_{\bar\alpha,p} \lesssim \|A\| 2^{-m(\alpha-\bar\alpha)}\;.
\end{equ}
Let us now prove that the series of the $g_n$'s is also summable in $\cE^{\alpha,p}$. We have
\begin{equs}
\left\| \sum_{n=m}^{M} g_n \right\|_{\alpha,p} \lesssim \sum_{n=m}^M \sup_{N\geq 0} \| Q_N g_n \|_{\alpha,p} \vee \| \cP_0 g_n \|_{\alpha,p}\;,
\end{equs}
where $Q_N$ denotes the projection onto $V_N^\perp$ and $\cP_0$ the projection onto $V_0$. Since $g_n\in V_n$, we have
\begin{equs}
g_n = \sum_{(s,y)\in\Lambda_n} \langle g_n, \varphi_{s,y}^n\rangle \varphi_{s,y}^n = \sum_{(s,y)\in\Lambda_n} \delta A^n_{s,y} \varphi_{s,y}^n\;. 
\end{equs}
Whenever $N\geq n$, $Q_N g_n$ vanishes. On the other hand, we have $|\langle \varphi_{s,y}^n , \psi_{t,x}^N\rangle | \lesssim 2^{-(n-N)\frac{|\s|}{2}}$ uniformly over all $N < n$, and this inner product actually vanishes as soon as $\|(t-s,x-y)\|_{\s} > C 2^{-N}$. Consequently, using the triangle inequality on the sum over $s$ and Jensen's inequality on the sum over $y$ to pass from the third to the fourth line, we have
\begin{equs}
{}&\| Q_N g_n \|_{\alpha,p}\\
&\lesssim \sup_{t\in 2^{-2N}\Z} \bigg( \sum_{x:(t,x)\in\Lambda_N} 2^{-Nd} \Big( \sum_{(s,y)\in\Lambda_n} \frac{|\delta A^n_{s,y}| |\langle \varphi^n_{s,y},\psi^N_{t,x}\rangle|}{2^{-N\frac{|\s|}{2}-N\alpha}}\Big)^p \bigg)^{\frac{1}{p}}\\
&\lesssim \sup_{t\in 2^{-2N}\Z} \bigg( \sum_{x:(t,x)\in\Lambda_N} 2^{-Nd} \Big( \!\!\!\sum_{\substack{(s,y)\in\Lambda_n\\\|(t-s,x-y)\|_{\s} \leq C 2^{-N}}}\!\!\!2^{-(n-N)|\s|} \frac{|\delta A^n_{s,y}|}{2^{-n\frac{|\s|}{2}-N\alpha}}\Big)^p \bigg)^{\frac{1}{p}} \\
&\lesssim \sup_{t\in 2^{-2N}\Z}\sum_{\substack{s\in 2^{-2n}\Z\\ |t-s| \leq C^2 2^{-2N}}}2^{-2(n-N)} \bigg( \sum_{x:(t,x)\in\Lambda_N}\sum_{\substack{y:(s,y)\in\Lambda_n\\ |x-y| \leq C 2^{-N}}}\!\!\!2^{-nd} \Big| \frac{\delta A^n_{s,y}}{2^{-n\frac{|\s|}{2}-N\alpha}}\Big|^p \bigg)^{\frac{1}{p}} \\
&\lesssim \sup_{s\in 2^{-2n}\Z} \Big( \sum_{y:(s,y)\in\Lambda_n}2^{-nd} \Big|\frac{\delta A^n_{s,y}}{2^{-n\frac{|\s|}{2}-n\gamma}}\Big|^p\Big)^{\frac{1}{p}} 2^{-n\gamma}\;,
\end{equs}
uniformly over all $n>N\geq 0$. The calculation for $\cP_0 g_n$ is very similar. Consequently, $\| \sum_{n=m}^{\infty} g_n \|_{\alpha,p} \lesssim \|A\| 2^{-m\gamma}$ and the asserted convergence is proved. Moreover, the bounds (\ref{Eq:RateReconstr}) follow immediately by keeping track of constants.
\end{proof}
We now proceed to the proof of the reconstruction theorem. Even though the general method of proof is quite similar to that of Theorem 3.10 in~\cite{Hairer2014}, a specific work is needed here in order to get the refined bound (\ref{Eq:BoundReconstructionp}).
\begin{proof}[of Theorem \ref{Th:Reconstruction}]
Set
\begin{equ}
M = (\mbox{diam supp }\varphi) \vee (\max\{ \mbox{diam supp } \psi: \psi\in\Psi\}) \vee (\max\{|k|: a_k \ne 0\})\;.
\end{equ}
Let us introduce the following notation: for all $t\in\R$, we let $t^{\downarrow n} := t - C 2^{-2n}$ where $C = 7 M^2 +1$. Recall the notation $x_{n,k}$ and $t_{n,k}$ introduced above (\ref{Eq:Waveletak}). For all $n\geq 0$, we define
\begin{equ}
\cR_n f := \sum_{(t,x)\in\Lambda_n} A^n_{t,x} \varphi^n_{t,x} \;,
\end{equ}
where, for all $(t,x)\in \R^{d+1}$
\begin{equ}
A^n_{t,x} := \int_{y\in B(x,2^{-n})} 2^{nd} \langle \Pi_{t^{\downarrow n},y} f(t^{\downarrow n},y) , \varphi^n_{t,x} \rangle dy \;,
\end{equ}
with $\scal{\cdot,\cdot}$ denoting the pairing between distributions and test functions.
One can write
\begin{equs}
\delta A^n_{t,x} = \sum_{k\in\Z^{d+1}} a_k \Big( &\int_{v\in B(x_{n,k},2^{-(n+1)})} 2^{(n+1)d} \big\langle \Pi_{t^{\downarrow n+1}_{n,k} , v} f(t^{\downarrow n+1}_{n,k} , v) , \varphi^{n+1}_{t_{n,k},x_{n,k}} \big\rangle dv\\
&- \int_{u\in B(x,2^{-n})} 2^{nd} \big\langle \Pi_{t^{\downarrow n},u} f(t^{\downarrow n},u) , \varphi^{n+1}_{t_{n,k},x_{n,k}} \big\rangle du\Big)\;.
\end{equs}
Observe that any two points $v$ and $u$ appearing in the integral above are at distance at most $(M+3)2^{-(n+1)}$
from each other. A simple calculation thus shows that
\begin{equ}\label{Eq:BounddeltaAn}
|\delta A^n_{t,x}| \lesssim \sum_{\substack{k\in\Z^{d+1}\\a_k\ne 0}}\sum_{\zeta\in\cA} \int_{u \in B(x,2^{-n})} 2^{n(d-\zeta-\frac{|\s|}{2})} F_{\zeta}^n(t^{\downarrow n},t^{\downarrow n+1}_{n,k},u)\,du\;,
\end{equ}
where the quantity $F_{\zeta}^n$ is given by
\begin{equs}
F_{\zeta}^n(t,s,u) &= \| \Pi \|_{su}|f(s,u)-\Gamma_{s,t}^u f(t,u)|_\zeta\\
&+ \int_{v\in B(u,(M+3) 2^{-(n+1)})} 2^{nd} \| \Pi \|_{sv}|f(s,v)-\Gamma_{v,u}^{s} f(s,u)|_\zeta dv\;.
\end{equs}
At this stage, it is simple to check that the conditions of Proposition \ref{Prop:Reconstruction} are satisfied, so that $\cR$ can be defined as the limit of $\cR_n$ as $n\rightarrow\infty$.

Let us now establish (\ref{Eq:BoundReconstructionp}). For every $\lambda \in (0,1]$, we let $n_0$ be the smallest integer such that $2^{-n_0} \leq \lambda$. Then, we define $n_1$ as the smallest integer such that
\begin{equ}\label{Eq:Defn0n1}
2^{-n_0} \geq 6M 2^{-n_1}\;,\quad 2^{-2n_0} \geq (7M^2+C) 2^{-2n_1}\;.
\end{equ}
Then, we write
\begin{equs}
\cR f - \Pi_{t,x}f(t,x) &= \big(\cR_{n_1} f - \cP_{n_1} \Pi_{t,x} f(t,x) \big) \label{Eq:SumRn}\\
&+ \sum_{n\geq n_1} \cR_{n+1} f - \cR_n f - \big( \cP_{n+1} - \cP_n \big) \Pi_{t,x} f(t,x)\;,
\end{equs}
where $\cP_n$ is the orthogonal projection onto $V_n$. We bound the terms on the right hand side separately. To that end, we introduce the set
\begin{equ}
\Lambda_{n}^{t,x,\lambda} := \big\{ (s,y) \in \Lambda_n: |t-s| \leq \lambda^2 + 7M^2 2^{-2n}, |x-y| \leq \lambda + 5M 2^{-n}\big\}\;.
\end{equ}
We claim that
\begin{equs}
\bigg\| \sum_{(s,y)\in \Lambda_{n}^{t,x,\lambda}} &\Big|A^n_{s,y} - \langle \Pi_{t,x} f(t,x) , \varphi^n_{s,y} \rangle\Big| \bigg\|_{L^p_{x_0,1}} \label{Eq:BoundDegueu}\\
&\lesssim \| \Pi \|_{B^n_{\lambda,t,x_0}}(1+\| \Gamma \|_{B^n_{\lambda,t,x_0}}) \|f \|_{B^n_{\lambda,t,x_0}} \sum_{\zeta\in\cA} \lambda^{|\s|+\gamma-\zeta} 2^{-n(\zeta-\frac{|\s|}{2})}\;,
\end{equs}
holds uniformly over all $(t,x_0)\in\R^{d+1}$, all $\lambda\in (0,1]$ and all $n\geq n_1$. We postpone the proof of (\ref{Eq:BoundDegueu}), and proceed to bounding the terms appearing in (\ref{Eq:SumRn}). The first term on the right hand side of (\ref{Eq:SumRn}) yields the following contribution:
\begin{equs}
\langle \cR_{n_1} f - \cP_{n_1} \Pi_{t,x} f(t,x) , \eta^\lambda_{t,x} \rangle = \sum_{(s,y)\in \Lambda_{n_1}}\big( A^{n_1}_{s,y} - \langle \Pi_{t,x} f(t,x) , \varphi^{n_1}_{s,y} \rangle \big) \langle \varphi^{n_1}_{s,y} , \eta^\lambda_{t,x} \rangle\;.
\end{equs}
We have $|\langle \varphi^{n_1}_{s,y} , \eta^\lambda_{t,x} \rangle| \lesssim 2^{-n_1\frac{|\s|}{2}}\lambda^{-|\s|}$ uniformly over all the parameters, and the inner product vanishes as soon as $(s,y) \notin \Lambda_{n_1}^{t,x,\lambda}$. Therefore, using (\ref{Eq:BoundDegueu}) we obtain that 
\begin{equs}
\bigg\| \sup_{\eta\in\cB^r}\Big|&\langle \cR_{n_1} f - \cP_{n_1} \Pi_{t,x} f(t,x) , \eta^\lambda_{t,x} \rangle\Big| \bigg\|_{L^p_{x_0,1}}\\
&\lesssim \| \Pi \|_{B^{n_1}_{\lambda,t,x_0}}(1+\| \Gamma \|_{B^{n_1}_{\lambda,t,x_0}}) \|f \|_{B^{n_1}_{\lambda,t,x_0}} \lambda^\gamma\;,
\end{equs}
as required. We turn to the second term on the right hand side of (\ref{Eq:SumRn}). As before, we write
\begin{equs}
\cR_{n+1} f - \cR_n f = \delta_n f + g_n \;,
\end{equs}
with $\delta_n f\in V_n^\perp$ and $g_n \in V_n$. We then have
\begin{equs}
\langle &\delta_n f - \big( \cP_{n+1} - \cP_n \big) \Pi_{t,x} f(t,x) , \eta^\lambda_{t,x} \rangle\\
&= \sum_{(s,y)\in\Lambda_{n+1}} \sum_{(r,u)\in\Lambda_n}  \Big( A^{n+1}_{s,y} - \langle \Pi_{t,x} f(t,x) , \varphi^{n+1}_{s,y} \rangle \Big) \langle \varphi^{n+1}_{s,y} , \psi^n_{r,u} \rangle \langle \psi^n_{r,u} , \eta^\lambda_{t,x} \rangle\;.
\end{equs}
Observe that $|\langle \varphi^{n+1}_{s,y} , \psi^n_{r,u} \rangle| \lesssim 1$ and 
$|\langle \psi^n_{r,u} , \eta^\lambda_{t,x} \rangle| \lesssim 2^{-n(r+\frac{|\s|}{2})} \lambda^{-(r+|\s|)}$, 
uniformly over all the parameters. For every given $(s,y)$, the first inner product vanishes except 
for those finitely many space-time coordinates $(r,u) \in \Lambda_n$ such that $|r-s|\leq 5 M^2 2^{-2(n+1)}$ 
and $|u-y|\leq 3 M 2^{-(n+1)}$. Furthermore, the second inner product vanishes whenever 
$|r-t| > \lambda^2+M^2 2^{-2n}$ or $|u-x| > \lambda + M 2^{-n}$. Therefore, we have
\begin{equs}
{}&|\langle \delta_n f - \big( \cP_{n+1} - \cP_n \big) \Pi_{t,x} f(t,x) , \eta^\lambda_{t,x} \rangle|\\
&\lesssim \sum_{(s,y)\in\Lambda_{n+1}^{t,x,\lambda}} \Big| A^{n+1}_{s,y} - \langle \Pi_{t,x} f(t,x) , \varphi^{n+1}_{s,y} \rangle \Big| 2^{-n(r+\frac{|\s|}{2})} \lambda^{-(r+|\s|)}\;,
\end{equs}
uniformly over all the parameters. Using (\ref{Eq:BoundDegueu}), it is then easy to get
\begin{equs}
\bigg\| \sup_{\eta\in\cB^r}\Big|&\langle \delta_n f - \big( \cP_{n+1} - \cP_n \big) \Pi_{t,x} f(t,x) , \eta^\lambda_{t,x} \rangle\Big| \bigg\|_{L^p_{x_0,1}}\\
&\lesssim \| \Pi \|_{B^{n+1}_{\lambda,t,x_0}}(1+\| \Gamma \|_{B^{n+1}_{\lambda,t,x_0}}) \|f \|_{B^{n+1}_{\lambda,t,x_0}} \Big(\frac{2^{-(n+1)}}{\lambda}\Big)^{r+\alpha}\lambda^\gamma\;,
\end{equs}
as required. Finally, we treat the contribution of $g_n=\sum_{(s,y)\in\Lambda_n} \delta A_{s,y}^n \varphi_{s,y}^n$:
\begin{equs}
{}&\bigg\| \sup_{\eta\in\cB^r}|\langle g_n , \eta^\lambda_{t,x} \rangle|\bigg\|_{L^p_{x_0,1}} \lesssim \bigg\|\sum_{\substack{(s,y)\in\Lambda_{n}:|s-t|\leq \lambda^2 + M^2 2^{-2n}\\ |y-x|\leq \lambda+M 2^{-n}}} \big|\delta A^n_{s,y}\big| 2^{-n\frac{|\s|}{2}}\lambda^{-|\s|} \bigg\|_{L^p_{x_0,1}}\;.
\end{equs}
For all $s$ in the sum above and for all $k\in\Z^{d+1}$ such that $a_k\ne 0$, $s_{n,k}^{\downarrow n+1}$ belongs to $[t-\lambda^2-(5M^2+C)2^{-2(n+1)} , t+\lambda^2+(5M^2-C)2^{-2(n+1)}]$, which is a subset of $[t-2\lambda^2,t+\lambda^2-2^{-(n+1)}]$ thanks to (\ref{Eq:Defn0n1}) and the definition of $C$. By (\ref{Eq:BounddeltaAn}), a simple calculation using Jensen's inequality yields
\begin{equs}
{}\bigg\| \sup_{\eta\in\cB^r}\Big|\langle g_n , \eta^\lambda_{t,x} \rangle\Big| \bigg\|_{L^p_{x_0,1}}\lesssim \| \Pi \|_{B^n_{\lambda,t,x_0}} \|f \|_{B^n_{\lambda,t,x_0}}2^{-n\gamma}\;,
\end{equs}
so that the asserted bound follows.

We are now left with the proof of (\ref{Eq:BoundDegueu}). We split $A^n_{s,y} - \langle \Pi_{t,x} f(t,x), \varphi^n_{s,y} \rangle$ into the sum of
\begin{equ}
I_n(t,x,s,y) = \int_{u\in B(y,2^{-n})} 2^{nd} \langle \Pi_{s^{\downarrow n},u} \big(f(s^{\downarrow n},u) - \Gamma_{u,x}^{s^{\downarrow n}} f(s^{\downarrow n},x)\big) , \varphi^n_{s,y} \rangle du\;,
\end{equ}
and
\begin{equ}
J_n(t,x,s,y) = \langle \Pi_{s^{\downarrow n},y}\Gamma_{y,x}^{s^{\downarrow n}} \big(f(s^{\downarrow n},x) - \Gamma_{s^{\downarrow n},t}^x f(t,x)\big) , \varphi^n_{s,y} \rangle\;.
\end{equ}
We start with $|I_n(t,x,s,y)|$, which can be bounded by
\begin{equ}
\sum_{\zeta\in\cA}\int_{u\in B(y,2^{-n})} 2^{n(d-\zeta-\frac{|\s|}{2})} \|\Pi\|_{s^{\downarrow n},u} \big|f(s^{\downarrow n},u) - \Gamma_{u,x}^{s^{\downarrow n}} f(s^{\downarrow n},x)\big|_\zeta du\;.
\end{equ}
For all $(s,y)\in\Lambda_n^{t,x,\lambda}$, we have $|y-x|\leq \lambda+5M 2^{-n}$ so that using (\ref{Eq:Defn0n1}), we can bound the integral over all $u\in B(y,2^{-n})$ by the same integral over all $u\in B(x,2\lambda)$. This yields
\begin{equs}
{}&\bigg\| \sum_{(s,y)\in \Lambda_{n}^{t,x,\lambda}} |I_n(t,x,s,y)|\bigg\|_{L^p_{x_0,1}}\lesssim \sum_{\substack{s\in 2^{-2n}\Z\\|s-t| \leq \lambda^2 +7M^2 2^{-2n}}} \bigg\| \sum_{\zeta\in\cA}\int_{u\in B(x,2\lambda)} 2^{n(d-\zeta-\frac{|\s|}{2})}\\
&\qquad\times\|\Pi\|_{s^{\downarrow n},u}\big|f(s^{\downarrow n},u) - \Gamma_{u,x}^{s^{\downarrow n}} f(s^{\downarrow n},x)\big|_\zeta du\bigg\|_{L^p_{x_0,1}}\\
&\lesssim \| \Pi \|_{B^n_{\lambda,t,x_0}} \|f \|_{B^n_{\lambda,t,x_0}} \sum_{\zeta\in\cA} \lambda^{|\s|+\gamma-\zeta} 2^{-n(\zeta-\frac{|\s|}{2})}\;,
\end{equs}
as required. Notice that we have used the fact that the sum over $s$ at the second line contains at most $(\lambda 2^{n})^2$ elements, and that for all these $s$, we have $s^{\downarrow n} \in [t-2\lambda^2,t+\lambda-2^{-2n}]$ thanks to (\ref{Eq:Defn0n1}) and the definition of $C$.

To bound $|J_n(t,x,s,y)|$, we distinguish two cases. If $s^{\downarrow n} > t$, then it can be bounded by
\begin{equs}
{}&\lesssim \sum_{\substack{\zeta,\beta \in\cA\\\zeta\geq \beta}} \|\Pi\|_{s^{\downarrow n},y}\|\Gamma\|_{s^{\downarrow n}y,s^{\downarrow n}x} |x-y|^{\zeta-\beta} \big|f(s^{\downarrow n},x) - \Gamma_{s^{\downarrow n},t}^x f(t,x)\big|_\zeta \,2^{-n(\beta + \frac{|\s|}{2})}\\
&\lesssim \sum_{\zeta\geq\beta} \|\Pi\|_{s^{\downarrow n},y} \|\Gamma\|_{s^{\downarrow n}y,s^{\downarrow n}x}\frac{\big|f(s^{\downarrow n},x) - \Gamma_{s^{\downarrow n},t}^x f(t,x)\big|_\zeta}{\lambda^{\gamma-\zeta}} \,\lambda^{\gamma-\beta} 2^{-n(\beta + \frac{|\s|}{2})}\;.
\end{equs}
On the other hand, if $s^{\downarrow n} < t$, then we write 
\begin{equ}
J_n(t,x,s,y) = -\langle \Pi_{s^{\downarrow n},y}\Gamma_{s^{\downarrow n}y,tx} \big(f(t,x) - \Gamma_{t,s^{\downarrow n}}^x f(s^{\downarrow n},x)\big) , \varphi^n_{s,y} \rangle\;,
\end{equ}
and, for all $(s,y)\in\Lambda_n^{t,x,\lambda}$, we bound $|J_n(t,x,s,y)|$ by
\begin{equs}
{}&\lesssim \sum_{\substack{\zeta,\beta \in\cA\\\zeta\geq \beta}} \|\Pi\|_{s^{\downarrow n},y}\|\Gamma\|_{s^{\downarrow n}y,tx} \lambda^{\zeta-\beta} \big|f(t,x) - \Gamma_{t,s^{\downarrow n}}^x f(s^{\downarrow n},x)\big|_\zeta \,2^{-n(\beta + \frac{|\s|}{2})}\\
&\lesssim \sum_{\zeta\geq\beta} \|\Pi\|_{s^{\downarrow n},y}\|\Gamma\|_{s^{\downarrow n}y,tx} \frac{\big|f(t,x) - \Gamma_{t,s^{\downarrow n}}^x f(s^{\downarrow n},x)\big|_\zeta}{\lambda^{\gamma-\zeta}} \,\lambda^{\gamma-\beta} 2^{-n(\beta + \frac{|\s|}{2})}\;.
\end{equs}
In both cases, we deduce that
\begin{equs}
{}&\bigg\| \sum_{(s,y)\in \Lambda_{n}^{t,x,\lambda}} |J_n(t,x,s,y)|\bigg\|_{L^p_{x_0,1}}\\
&\lesssim \| \Pi \|_{B^n_{\lambda,t,x_0}}\| \Gamma \|_{B^n_{\lambda,t,x_0}} \|f \|_{B^n_{\lambda,t,x_0}} \sum_{\zeta\in\cA} \lambda^{|\s|+\gamma-\zeta} 2^{-n(\zeta-\frac{|\s|}{2})}\;,
\end{equs}
This ends the proof.

The uniqueness of the reconstruction follows from the same argument as in~\cite{Hairer2014}, but for completeness, we recall it briefly. Assume that $\xi_1$ and $\xi_2$ are two candidates for $\cR f$ that both satisfy (\ref{Eq:BoundReconstructionp}). Let $\psi$ be a compactly supported, smooth function on $\R^{d+1}$ and let $\eta\in\cB^r$ be even and integrating to $1$. We set
\[ \psi_\lambda(s,y) = \langle \eta^\lambda_{s,y} , \psi\rangle = \int \psi(t,x) \eta^\lambda_{t,x}(s,y) dt\,dx \;.\]
Then, we have
\[ \langle \xi_1-\xi_2 , \psi_\lambda \rangle = \int \psi(t,x) \langle \xi_1-\xi_2 , \eta^\lambda_{t,x}\rangle dt\,dx \;.\]
We obtain
\begin{equ}
|\langle \xi_1-\xi_2 , \psi_\lambda \rangle| \lesssim \|\psi\|_\infty \sup_t \Big\| \langle \xi_1-\xi_2 , \eta^\lambda_{t,x}\rangle \Big\|_{L^p(dx)}\lesssim \|\psi\|_\infty \lambda^\gamma \;,
\end{equ}
so that $\langle \xi_1-\xi_2 , \psi_\lambda \rangle$ goes to $0$ as $\lambda\downarrow 0$. Since $\psi_\lambda$ converges to $\psi$ in the $\cC^\infty$ topology, one has $\langle \xi_1-\xi_2 , \psi_\lambda \rangle \rightarrow \langle \xi_1-\xi_2 , \psi \rangle$. Hence $\xi_1=\xi_2$ and the uniqueness follows.

To complete the proof of the theorem, it remains to consider the case of two models $(\Pi,\Gamma)$ and $(\bar\Pi,\bar\Gamma)$. The reconstruction theorem applies to both $f$ and $\bar f$ separately, using the sequences $\cR_n f$ and $\bar{\cR}_n\bar f$ associated to each of them. Then, we observe that $|\delta A^n_{t,x} - \delta \bar{A}^n_{t,x}|$ satisfies the bound (\ref{Eq:BounddeltaAn}) with $F_{\zeta}^n(t,s,u)$ replaced by
\begin{equs}
{}&\tilde{F}_{\zeta}^n(t,s,u) = \| \Pi \|_{su}|f(s,u)-\bar f(s,u)-\Gamma_{s,t}^u f(t,u)+\bar\Gamma_{s,t}^u \bar f(t,u)|_\zeta\\
&+ \int_{B(u,(M+3) 2^{-(n+1)})}\!\!\! 2^{nd} \| \Pi \|_{sv}|f(s,v)-\bar f(s,v)-\Gamma_{v,u}^{s} f(s,u)+\bar\Gamma_{v,u}^{s} \bar f(s,u)|_\zeta dv \\
&+ \| \Pi-\bar\Pi \|_{su}|\bar f(s,u)-\bar \Gamma_{s,t}^u \bar f(t,u)|_\zeta\\
&+ \int_{B(u,(M+3) 2^{-(n+1)})}\!\!\! 2^{nd} \| \Pi-\bar\Pi \|_{sv}|\bar f(s,v)-\bar\Gamma_{v,u}^{s} \bar f(s,u)|_\zeta dv\;.
\end{equs}
Furthermore, in this context, (\ref{Eq:BoundDegueu}) becomes
\begin{equs}[Eq:BoundDegueuTwoModels]
\bigg\| \sum_{(s,y)\in \Lambda_{n}^{t,x,\lambda}} &\big|A^n_{s,y}-\bar{A}^n_{s,y} - \langle \Pi_{t,x} f(t,x)- \bar\Pi_{t,x} \bar{f}(t,x) , \varphi^n_{s,y} \rangle\big| \bigg\|_{L^p_{x_0,1}}\\
&\lesssim K^{n}_{t,x_0,\lambda} \sum_{\zeta\in\cA} \lambda^{|\s|+\gamma-\zeta} 2^{-n(\zeta-\frac{|\s|}{2})}\;,
\end{equs}
where $K^n_{t,x_0,\lambda}$ is given by (\ref{Eq:BoundNormModels}). The proof of (\ref{Eq:BoundDegueuTwoModels}) follows from the same arguments as above \textit{mutatis mutandis}. This being given, the proof of (\ref{Eq:BoundReconstructionTwo}) follows from exactly the same arguments as above.
\end{proof}

\section{Weighted spaces}\label{sec:Weights}

We would like to deal with white noise as the elementary input in our regularity 
structure, but unfortunately white noise does not live in any of the spaces $\cC^{\alpha}$ due to the unboundedness of the underlying space. In order to circumvent this problem, we choose to consider weighted versions of the previously mentioned spaces.
We first define the class of functions that have good enough properties to be used as weights.
\begin{definition}\label{Def:Weights}
A function $w:\R^d\rightarrow\R_+$ is a weight if there exists $C>0$ such that for all $x,y\in\R^d$ with $|x-y| \leq 1$
\[ \frac{1}{C} \leq \frac{w(x)}{w(y)} \leq C \;.\]
\end{definition}
All the weights considered in this article are built from two elementary families:
\[ \p_a (x) := (1+|x|)^a \;,\quad \e_\ell(x):= e^{\ell(1+|x|)} \;,\]
with $a,\ell\in \R$. It is easy to verify that these are indeed weights. 
We also observe that the constant $C$ can be taken uniformly over all $a$ and $\ell$ in compact sets of $\R$.
Given a weight $w$, we let $\cC^\alpha_w(\R^{d+1})$ be the set of distributions $f$ on $\R^{d+1}$ such that
\begin{equ}
\sup_{\lambda \in (0,1]} \sup_{(t,x)\in\R^{d+1}} \sup_{\eta \in \cB^r(\R^{d+1})} \frac{|\langle f , \eta^\lambda_{t,x} \rangle|}{w(x) \, \lambda^\alpha}  < \infty\;.
\end{equ}

\begin{remark}
Our setting may seem surprising since our weights are in space and not in space-time; the reason for this 
choice is twofold. First, the solution map for the SPDEs only needs to be defined on (arbitrary) 
bounded intervals of time, so that it suffices to characterise the regularity of the white noise on 
$(0,T] \times \R^d$: therefore, only the unboundedness of the space variable matters. Second, 
and this is more serious, we aim at 
using the exponential weights $\e_{\ell+t}$ for the solution, and it happens that they are \textit{not} 
space-time weights since $e^{t(1+|x|)}/e^{s(1+|y|)}$ is not uniformly bounded from above and below, when 
$(t,x)$ and $(s,y)$ are only constrained to be at distance at most $1$ from one another.
\end{remark}

We now characterise the regularity of white noise. Let us start with the case of the space-time white noise, which is the driving noise for (SHE). Let $\chi_{T}:\R\rightarrow\R$ be a compactly supported smooth function, which is equal to $1$ on $(-2T,2T)$, and let $\xi$ be a white noise on $\R^{d+1}$. Let $\rho :\R^{d+1} \to \R$ be a compactly supported, even, smooth function that integrates to one. We set $\rho_\epsilon(t,x) = \epsilon^{-|\s|}\rho(t\epsilon^{-2},x\epsilon^{-1})$, and we define the mollified noise $\xi_\epsilon = \rho_\epsilon * \xi$.

\begin{lemma}\label{Lemma:RegXi}
Fix $a >0$, set $\wPi(x):= (1+|x|)^a$, $x\in\R^d$, and let $\alpha < -|\s|/2$. Then, for any arbitrary $T>0$, $\xi\cdot\chi_T$ admits a modification that belongs almost surely to $\cC^{\alpha}_{\wPi}$, and there exists $\delta > 0$ such that
\begin{equation*}
\bbE \|\xi_\epsilon\cdot\chi_T - \xi\cdot\chi_T\|_{\alpha,\wPi} \lesssim \epsilon^{\delta}\;,
\end{equation*}
uniformly over all $\epsilon\in(0,1]$.
\end{lemma}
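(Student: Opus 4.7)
The natural strategy is to work through a wavelet characterisation of the weighted space $\cC^\alpha_{\wPi}$ analogous to Proposition~\ref{Prop:CharactDistrib} but with the bounds of \eref{e:characterisationEap} multiplied by $\wPi(x)$ on the right-hand side. Such a characterisation follows by repeating the argument of Proposition~\ref{Prop:CharactDistrib} essentially verbatim, using that $\wPi$ varies slowly (Definition~\ref{Def:Weights}) so that, for any $(t,x)\in\R^{d+1}$ and any wavelet $\psi^n_{s,y}$ or $\phi_{s,y}$ whose support intersects a ball of radius $1$ around $(t,x)$, $\wPi(y)$ and $\wPi(x)$ differ only by multiplicative constants. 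This reduces matters to controlling, for each fixed wavelet $\psi\in\Psi$ (and similarly for $\phi$), the quantities
\begin{equ}
\cZ^n := \sup_{(t,x)\in\Lambda_n,\,|t|\leq 3T} \frac{|\langle (\xi_\epsilon-\xi)\chi_T,\psi^n_{t,x}\rangle|}{\wPi(x)\,2^{-n|\s|/2-n\alpha}}\;,
\end{equ}
together with the analogous quantity for $\xi\chi_T$ in place of $(\xi_\epsilon-\xi)\chi_T$. Here $|t|\leq 3T$ suffices since $\chi_T$ has compact support and the wavelets are compactly supported.

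The key input is a moment bound on the Gaussian wavelet coefficients. Since $\xi_\epsilon = \rho_\epsilon * \xi$ with $\rho$ even, we have $\langle (\xi_\epsilon-\xi)\chi_T,\psi^n_{t,x}\rangle = \xi(\rho_\epsilon*(\chi_T\psi^n_{t,x})-\chi_T\psi^n_{t,x})$, which is centred Gaussian. Hence, for any $q \ge 2$,
\begin{equ}
\bbE|\langle (\xi_\epsilon-\xi)\chi_T,\psi^n_{t,x}\rangle|^q \lesssim_q \|\rho_\epsilon*(\chi_T\psi^n_{t,x})-\chi_T\psi^n_{t,x}\|_{L^2}^{q}\;.
\end{equ}
This $L^2$ norm is trivially $\lesssim 1$, and when $\epsilon < 2^{-n}$ we gain a factor $(\epsilon\,2^n)^\theta$ for some small $\theta\in(0,1)$ by exploiting the $\s$-scaled H\"older regularity of $\chi_T\psi^n_{t,x}$ at scale $2^{-n}$. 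Altogether, $\bbE|\langle (\xi_\epsilon-\xi)\chi_T,\psi^n_{t,x}\rangle|^q \lesssim_q \min\bigl(1,(\epsilon 2^n)^{q\theta}\bigr)$.

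Now, a union bound over $\Lambda_n$ combined with a discrete Sobolev embedding in $q$ yields
\begin{equ}
\bbE[\cZ^n]^q \le \sum_{\substack{(t,x)\in\Lambda_n\\ |t|\le 3T}} \frac{\bbE|\langle(\xi_\epsilon-\xi)\chi_T,\psi^n_{t,x}\rangle|^q}{\wPi(x)^q\,2^{-qn|\s|/2-qn\alpha}} \lesssim_q T\, 2^{n|\s|}\,\min(1,(\epsilon 2^n)^{q\theta})\,2^{qn(|\s|/2+\alpha)}\;,
\end{equ}
where the sum over $x\in 2^{-n}\Z^d$ of $\wPi(x)^{-q}$ is $\lesssim 2^{nd}$ provided $q>d/a$. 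Writing $c:=|\s|/2+\alpha<0$, we estimate the small-scale range $2^n\le \epsilon^{-1}$ by $T\,\epsilon^{q\theta}\,2^{n(|\s|+q(c+\theta))}$, summable in $n$ provided $q$ is chosen large enough so that $|\s|+q(c+\theta)<0$ (pick $\theta$ much smaller than $|c|$ first). The large-scale range $2^n>\epsilon^{-1}$ gives $T\sum 2^{n(|\s|+qc)} \lesssim T\,\epsilon^{q|c|-|\s|}$, again positive rate for $q$ large. Combining, $\bbE[\sum_n (\cZ^n)^q]^{1/q}\lesssim \epsilon^{\delta}$ for some $\delta>0$, and the same argument with $\chi_T\psi^n_{t,x}$ in place of $\rho_\epsilon*(\cdot)-(\cdot)$ gives $\bbE\|\xi\chi_T\|_{\alpha,\wPi}<\infty$ (taking $\epsilon=0$ in the bound, i.e.\ using only the trivial $L^2$ estimate). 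The $\phi$-coefficients at level $n=0$ are handled analogously.

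\textbf{Main obstacle.} There is no real conceptual difficulty: the scheme is standard Kolmogorov/Gaussian moment estimation in a wavelet basis, and the only points requiring care are (i) checking that the wavelet characterisation of Proposition~\ref{Prop:CharactDistrib} carries over to the weighted setting (easy, using slow variation of $\wPi$ and the fact that each wavelet has support in a ball of fixed size, on which $\wPi$ is essentially constant), and (ii) choosing $q$ large enough so that both $qa>d$ and $q|c|>|\s|$, which forces $a>0$ and is consistent with the assumption of the lemma. The quantitative convergence rate $\epsilon^\delta$ then comes out automatically from the split into the small-scale and large-scale regimes.
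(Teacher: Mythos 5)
Your proposal is correct and follows essentially the same route as the paper's proof: a (weighted) wavelet characterisation \`a la Proposition~\ref{Prop:CharactDistrib}, Gaussian moment bounds on the coefficients with the key estimate $\|\chi_T\psi^n_{t,x}-\rho_\epsilon*(\chi_T\psi^n_{t,x})\|_{L^2}\lesssim 1\wedge(\epsilon 2^n)^\theta$, and summation over scales and lattice points with $q$ (resp.\ $p$) large enough to beat both the volume factor and the weight. The only cosmetic difference is that the paper takes $\theta=1$ and organises the moment computation directly as a sum of variances raised to the power $p$, rather than through your explicit small-scale/large-scale split, but the mechanism and the resulting rate $\epsilon^\delta$ are the same.
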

\noindent Observe that $a$ can be taken as small as desired. In the case of (PAM), the white noise 
is only in space and an immediate adaptation of the proof shows that it admits a modification in 
$\cC^{\alpha}_{\wPi}$ for any $\alpha < -d/2$.
\begin{proof}
From Proposition \ref{Prop:CharactDistrib}, it suffices to show that almost surely
\begin{equation*}
\sup_{n\geq 0}\sup_{\psi \in \Psi}\sup_{(t,x)\in\Lambda_n}\frac{|\left\langle\xi\cdot\chi_T,\psi_{t,x}^{n}\right\rangle|}{\wPi(x)2^{-n\frac{|\s|}{2}-n\alpha}} < \infty  \;,\quad \sup_{(t,x)\in\Lambda_0}\frac{|\left\langle\xi\cdot\chi_T, \varphi_{t,x}\right\rangle|}{\wPi(x)} < \infty \;.
\end{equation*}
We only treat the first bound, since the second is similar. For any $p\geq 1$, we write
\begin{equs}
\bbE\bigg[ \sup_{n\geq 0}\sup_{\psi \in \Psi}&\sup_{(t,x)\in\Lambda_n}\bigg(\frac{|\left\langle\xi\cdot\chi_T,\psi_{t,x}^{n}\right\rangle|}{\wPi(x)2^{-n\frac{|\s|}{2}-n\alpha}} \bigg)^{2p}\bigg]\\
&\lesssim \sum_{n\geq 0}\sum_{\psi \in \Psi}\sum_{(t,x)\in\Lambda_n}\bigg(\frac{\bbE \left\langle\xi\cdot\chi_T,\psi_{t,x}^n\right\rangle^2}{\wPi(x)^{2}2^{-|\s|n-2n\alpha}}\bigg)^p
\end{equs}
where we have used the equivalence of moments of Gaussian random variables. Recall that the $L^2$ norm of $\psi_{t,x}^{n}$ is $1$, that the cardinality of the restriction of $\Lambda_n$ to the unit ($\s$-scaled parabolic) ball of $\R^{d+1}$ is of order $2^{|\s|n}$, and that $\Psi$ is a finite set. Recall also that $\chi_T$ is compactly supported. Thus we obtain that the last term is of order
\begin{equs}
\sum_{x\in\Z^{d}} \wPi(x)^{-2p}\sum_{n\geq 0} 2^{|\s|n(p+1)+2\alpha np} \;.
\end{equs}
Taking $p$ large enough, the sums over $n$ and $x$ converge. This shows that $\xi\cdot\chi_T$ admits a modification that almost surely belongs to $\cC^{\alpha}_{\wPi}$. We turn to $\|(\xi_\epsilon- \xi)\chi_T\|_{\alpha,\wPi}$. The computation is very similar, the only difference rests on the term
\begin{equs}
\bbE \left\langle(\xi-\xi_\epsilon)\chi_T,\psi_{t,x}^{n}\right\rangle^2 &= \|\psi_{t,x}^n\chi_T-\rho_\epsilon*\big(\psi_{t,x}^n\chi_T\big)\|^2_{L^2} \;.
\end{equs}
When $t\notin (-2T-\epsilon,2T+\epsilon)$, this term vanishes. Otherwise, it can be bounded by a term of order $1\wedge (\epsilon^2 2^{2n})$ uniformly over all $\epsilon\in(0,1]$, all $n\geq 0$ and all $(t,x)\in\R^{d+1}$. We obtain
\begin{equs}
\bbE \bigg[\sup_{n\geq 0}\sup_{\psi \in \Psi}&\sup_{x\in\Lambda_n}\Big( \frac{|\left\langle(\xi-\xi_\epsilon)\chi_T,\psi_x^{n}\right\rangle|}{\wPi(x)2^{-n\frac{|\s|}{2}-n\alpha}} \Big)^{2p}\bigg]\\
&\lesssim \sum_{x\in\Z^d}\sum_{n\geq 0}\frac{2^{n(|\s|+2p\alpha+|\s|p)}(1 \wedge \epsilon^{2p} 2^{2np})}{\wPi(x)^{2p}} \;,
\end{equs}
so that for $\alpha<-|\s|/2$ and $p$ large enough, the previous calculation yields the bound $\bbE \|\xi_\epsilon - \xi\|_{\alpha,p,\wPi} \lesssim (\epsilon|\log \epsilon|^{\frac{1}{2p}}) \vee \epsilon^{-\alpha-\frac{|\s|}{2}(1+\frac{1}{p})}$ uniformly over all $\epsilon \in (0,1]$.
\end{proof}

Given a weight $\wPi$ on $\R^d$, we define weighted versions of the seminorm on the model. For any subset $B\subset \R^{d+1}$, we set
\begin{equs}
\$ \Pi\$_B := \sup_{z\in B} \frac{\left\| \Pi \right\|_z}{\wPi(x)}\;,\qquad\$ \Gamma\$_B := \sup_{\substack{z,z'\in B\\\|z-z'\|_{\s}\leq 1}} \frac{\left\| \Gamma \right\|_{z,z'}}{\wPi(x)}\;,
\end{equs}
where $x$ is the space component of $z$ in the above expressions. We are now in a position to introduce the natural model associated to the mollified noise.

\begin{lemma}\label{lem:normModel}
Set $\wPi(x) = (1+|x|)^a$ for a given $a > 0$. Then, for any set $B$ of the form $[0,T]\times\R^d$ the seminorms $\$ \Pi^{(\eps)} \$_B$ and $\$ \Gamma^{(\eps)} \$_B$ are almost surely finite.
\end{lemma}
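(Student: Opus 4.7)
The plan is to exploit the inductive construction of the canonical model together with the weighted regularity of the mollified noise provided by Lemma \ref{Lemma:RegXi}.

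First, any test function $\eta^\lambda_z$ with $z=(t,x)\in B$ and $\lambda\in(0,1]$ is supported in $(-2,T+2)\times\R^d$, so that replacing $\xi_\eps$ by $\xi_\eps\cdot\chi_{T+1}$ does not affect the model on $B$. Fix $a'>0$ and set $w(x):=(1+|x|)^{a'}$. By Lemma \ref{Lemma:RegXi}, for any $\alpha<-|\s|/2$ the truncated noise belongs almost surely to $\cC^\alpha_{w}$, which directly gives the required weighted bound on $(\Pi^{(\eps)}_z\Xi)(\eta^\lambda_z)$.

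Denote by $n(\tau)$ the number of occurrences of $\Xi$ in $\tau\in\cU\cup\cF$, and let $N:=\max_{\tau\in\cT}n(\tau)$, which is finite (equal to $4$ in the table). Proceeding by induction following the recursive construction of $\cU$ and $\cF$, I would establish that for every $\tau\in\cT$,
\[
\sup_{\eta\in\cB^r,\,\lambda\in(0,1]}\frac{|(\Pi^{(\eps)}_z\tau)(\eta^\lambda_z)|}{\lambda^{|\tau|}}
+\sup_{\substack{z'\in B(z,1)\\ \beta\leq|\tau|}}\frac{|\Gamma^{(\eps)}_{z,z'}\tau|_\beta}{\|z-z'\|_\s^{|\tau|-\beta}}
\lesssim w(x)^{n(\tau)}
\]
locally uniformly over $z=(t,x)\in B$. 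The base cases are covered by the previous paragraph and by the explicit formula for the polynomial part of the model. For the product rule $(\Pi^{(\eps)}_z\tau\bar\tau)(z')=(\Pi^{(\eps)}_z\tau)(z')\,(\Pi^{(\eps)}_z\bar\tau)(z')$, the standard multiplicative estimate produces the combined bound with weight $w^{n(\tau)+n(\bar\tau)}=w^{n(\tau\bar\tau)}$. For the integration rule, the identity (\ref{Eq:CondModel2}) decomposes $\Pi^{(\eps)}_z\cI(\tau)$ into a dyadic sum of convolutions with $P_n$ minus a Taylor correction; since $P_n$ is supported in a ball of radius $\lesssim 2^{-n}$ and $w$ has uniformly bounded ratios on unit balls (Definition \ref{Def:Weights}), the weight factor pulls out of each term and Lemma \ref{Lemma:Kernel} ensures convergence of the dyadic sum. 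Bounds on $\Gamma^{(\eps)}$ follow in parallel from expressing $\Gamma^{(\eps)}_{z,z'}\tau$ in terms of $\Pi^{(\eps)}$ applied to sub-symbols of $\tau$, as in \cite[Sec.~8]{Hairer2014} and \cite[Sec.~3.2]{Etienne}.

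Finally, choosing $a':=a/N$ yields $w^{N}\leq\wPi$, whence $\$\Pi^{(\eps)}\$_B+\$\Gamma^{(\eps)}\$_B<\infty$ almost surely. The delicate step is the integration: one needs to verify that at each dyadic scale the weight at points in the support of $P_n(z'-\cdot)$ remains comparable to $w(x)$, so that the inductive weight bound survives the dyadic summation without a spurious logarithmic loss. This is however handled cleanly by the slow-variation property of $w$, the remaining computations being otherwise parallel to the unweighted analysis in \cite[Sec.~8]{Hairer2014}.
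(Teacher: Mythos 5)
Your argument is essentially the paper's own proof: bound $\Xi$ via Lemma \ref{Lemma:RegXi} after truncating with a cutoff in time, then recurse over the finitely many symbols using the locality of the product and of the convolution with the compactly supported kernels $P_n$ together with the bounded-ratio property of the weight on unit balls, picking up a power of the polynomial weight that is absorbed by shrinking the exponent $a$, with the $\Gamma^{(\eps)}$ bounds deferred to \cite[Prop.~8.27, Lem.~5.21]{Hairer2014} just as in the paper. Your bookkeeping via the number of occurrences of $\Xi$ and the choice $a'=a/N$ is merely a more explicit version of the paper's ``weight $\wPi^n$ for some $n$, then decrease $a$ accordingly''.
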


These bounds are \textit{not} uniform over $\eps \in (0,1]$: this is the reason why a renormalisation at the level of the model is required.

\begin{proof}
Let $B = [0,T]\times\R^d$ for a given $T > 0$. First, we observe that the required bound on $\Pi^{(\eps)}_z$ holds for polynomials, and also for $\Xi$ by Lemma \ref{Lemma:RegXi} since $\langle \xi_\epsilon , \eta_z\rangle$ coincides with $\langle \xi_\epsilon \cdot\chi_T, \eta_z \rangle$ for all test functions $\eta \in \cB^r(\R^{d+1})$ and all $z\in B$. Then, the key observation is that all the elements in the regularity structure are built from polynomials and $\Xi$ by multiplication and/or application of $\cI$. Additionally, for every $\|z-z'\|_{\s} \leq 1$, the definitions of $\Pi^{(\eps)}_{z} \cI \tau(z')$ and $\Pi^{(\eps)}_{z}\tau\bar\tau(z')$ only involve the values of $\Pi^{(\eps)}_{z} \tau(\cdot)$ and $\Pi^{(\eps)}_{z} \bar\tau(\cdot)$ in a neighourhood of $z$, so that, for bounding these terms, the definition of a weight allows one to disregard the precise location at which the evaluation is taken. Since the regularity structure has finitely many elements, a simple recursion shows that the analytical bound on $\Pi^{(\eps)}_z$ holds with the weight $\wPi(x)^n$ for some $n\geq 1$, instead of $\wPi(x)$. Given the expression of $\wPi(x)$, it suffices to decrease $a$ accordingly in order to get the required statement. Regarding the analytical bound on $\Gamma^{(\eps)}_{z,z'}$, the proof follows from very similar arguments, using the proof of \cite[Prop~8.27]{Hairer2014} and the bound of \cite[Lemma 5.21]{Hairer2014}.
\end{proof}
\begin{notation}
From now on, the seminorm on the model will always be taken with the set $B=[0,T]\times\R^d$ and the maximal $T$ will always be clear from the context. Therefore, we will omit the subscript $B$ on this seminorm for simplicity.
\end{notation}

Let us now introduce weighted spaces of modelled distributions. For similar reasons as for the model, 
we add weights at infinity in the spaces $\cD^{\gamma,p}$. Moreover, to allow for an irregular initial 
condition, we also weigh these spaces near time $0$. For every $\zeta\in \cA$ and $t\in\R$, we consider 
two collections of weights on $\R^d$, $\wun_t(\cdot,\zeta)$ and $\wde_t(\cdot,\zeta)$. We set
\begin{equ}[e:defww]
\w_t(x) := \inf_{\zeta\in\cA} \inf_{i\in\{1,2\}} \wi_t(x,\zeta) \;,
\end{equ}
and make the following assumption.
\begin{assumption}[Weights and reconstruction]\label{AssumptionWRecons}
All the weights $\wi_t(x,\zeta)$ are increasing functions of time. Furthermore, there exists $c > 0$ such that, for any time $T > 0$, there exists $K > 0$ such that
\begin{equs}
K^{-1} \leq \sup_{x,y\in\R^d:|x-y|\leq 1}\frac{\wi_t(x,\zeta)}{\wi_t(y,\zeta)} &\leq K \label{CondP0}\tag{W-0}\;,\\
\sup_{x\in\R^d} \frac{(\wPi(x))^2\wi_s(x,\zeta)}{\w_t(x)} &\leq K (t-s)^{-\frac{c}{2}} \label{CondP1}\tag{W-1}\;,
\end{equs}
uniformly over all $s < t \in (-\infty,T]$, all $i\in\{1,2\}$ and all $\zeta\in\cA$.
\end{assumption}

From now on, we take $L^p=L^p(\R^d,dx)$ and, by convention, the integration variable is always $x$,
so that for example $\|f(x,y)\|_{L^p}$ really means $\|f(\cdot,y)\|_{L^p}$.
\begin{definition}
Let $\eta,\gamma \in \R$ and $p\in[1,\infty)$. We define $\ccD^{\gamma,\eta,p}_{T,\rw}$ as the set of maps $f:(0,T]\times\R^d\rightarrow \cT_{<\gamma}$ such that
\begin{equs}[Eq:NormccD]
\bigg\|\frac{|f_t(x)|_\zeta}{\wun_t(x,\zeta)}\bigg\|_{L^p} &\lesssim t^{\frac{(\eta-\zeta)\wedge 0}{2}}\;,\\
\bigg\|\int_{y\in B(x,\lambda)}\lambda^{-d}\frac{|f_t(y)-\Gamma_{y,x}^tf_t(x)|_\zeta}{\wde_t(x,\zeta)\, \lambda^{\gamma-\zeta}}dy\bigg\|_{L^p} &\lesssim t^{\frac{\eta-\gamma}{2}} \;,\\
\bigg\|\frac{|f(t,x)-\Gamma_{t,t-\lambda^2}^x f(t-\lambda^2,x)|_\zeta}{\wun_{t}(x,\zeta)\, \lambda^{\gamma-\zeta}}\bigg\|_{L^p} &\lesssim t^{\frac{\eta-\gamma}{2}}\;,
\end{equs}
uniformly over all $\lambda\in(0,2]$, all $t\in (2\lambda^2,T]$, and all $\zeta\in \cA$. If $f$ takes values in $\cT(\cU)$, resp. $\cT(\cF)$, we say that $f$ belongs to $\ccD^{\gamma,\eta,p}_{T,\rw}(\cU)$, resp. $\ccD^{\gamma,\eta,p}_{T,\rw}(\cF)$. Finally, we let $\$f\$$ denote the corresponding norm.
\end{definition}
As we did in the previous subsection, we need to be able to compare two modelled distributions $f$ and $\bar f$ associated to two different models $(\Pi,\Gamma)$ and $(\bar\Pi,\bar\Gamma)$. To that end, we define $\$f;\bar{f}\$$ as the supremum of
\begin{equs}
{}&\bigg\|\frac{|f(t,x)-\bar{f}(t,x)|_\zeta}{t^{\frac{(\eta-\zeta)\wedge 0}{2}}\,\wun_t(x,\zeta)}\bigg\|_{L^p(dx)}\\
&+ \bigg\|\int_{y\in B(x,\lambda)}\frac{|f(t,y)-\bar{f}(t,y)-\Gamma_{y,x}^tf(t,x)+\bar{\Gamma}_{y,x}\bar{f}(t,x)|_\zeta}{t^{\frac{\eta-\gamma}{2}}\,\wde_t(x,\zeta)\,\lambda^{\gamma-\zeta}}dy\bigg\|_{L^p(dx)}\\
&+ \bigg\|\frac{|f(t,x)-\bar{f}(t,x)-\Gamma_{t,t-\lambda^2}^x f(t-\lambda^2,x)+\bar{\Gamma}_{t,t-\lambda^2}\bar{f}(t-\lambda^2,x)|_\zeta}{t^{\frac{\eta-\gamma}{2}}\,\wun_{t}(x,\zeta)\,\lambda^{\gamma-\zeta}}\bigg\|_{L^p(dx)} \;,
\end{equs}
over all $\lambda\in(0,2]$, all $t\in (2\lambda^2,T]$ and all $\zeta\in \cA$.

Observe that the space $\ccD^{\gamma,\eta,p}_{T,\rw}$ is actually locally identical to $\cD^{\gamma,p}$ so that, for any test function $\eta^\lambda_{t,x}$ supported away from the negative times, we can use 
Theorem~\ref{Th:Reconstruction} and define a local reconstruction operator $\langle \tilde{\cR} f, \eta^\lambda_{t,x} \rangle$. The next theorem shows that there is a canonical distribution $\cR f$ that coincides with $\tilde{\cR} f$ everywhere. First, let us define a weighted version of the space $\cE^{\alpha,p}$.
\begin{definition}\label{Def:DistribSpace}
Let $\alpha < 0$, $p\in [1,\infty)$ and $T>0$. We let $\ccE^{\alpha,p}_{\rw,T}$ be the space of distributions $f$ on $(-\infty,T)\times\R^{d}$ such that
\begin{equ}\label{Eq:BoundDistribSpace}
\sup_{\lambda \in (0,1]} \sup_{t\in (-\infty,T-\lambda^2)} \bigg\| \sup_{\eta \in \cB^r(\R^{d+1})} \frac{|\langle f , \eta^\lambda_{t,x} \rangle|}{\lambda^\alpha \rw_{t+\lambda^2}(x)} \bigg\|_{L^p(dx)} < \infty\;,
\end{equ}
where the weights $\rw_t$ were defined in \eqref{e:defww}.
\end{definition}
We start with the following extension result.
\begin{proposition}\label{Prop:UniqDistr}
Let $\alpha\in (-2,0)$, $p\in [1,\infty]$ and $T>0$. Let $f$ be a distribution on the set of all $\eta\in\cC^r(\R^{d+1})$ whose support does not intersect the hyperplane $\{t=0\}$. Assume that $f$ satisfies the bound (\ref{Eq:BoundDistribSpace}) with the second supremum restricted to all $t\in (-\infty,T-\lambda^2) \backslash [-3\lambda^2,3\lambda^2]$. Then, $f$ can be uniquely extended into an element of $\ccE^{\alpha,p}_{\rw,T}$.
\end{proposition}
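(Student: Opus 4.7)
The plan is to define the extension by a dyadic partition of unity in time and then verify the scaling bound piece by piece, with uniqueness following from the structure of distributions supported on $\{t=0\}$ combined with the hypothesis $\alpha > -2$. Fix a smooth $\chi\colon\R\to[0,1]$ with $\chi(s) = 0$ for $|s|\le 1$ and $\chi(s) = 1$ for $|s|\ge 2$, and set $\chi^0 := \chi$ as well as $\chi^n(s) := \chi(2^{2n}s) - \chi(2^{2(n-1)}s)$ for $n\ge 1$, so that $\sum_{n\ge 0}\chi^n(s) = 1$ for $s\ne 0$, each $\chi^n$ is supported in $\{|s|\sim 2^{-2n}\}$, and $\|\partial_s^k\chi^n\|_\infty \lesssim 2^{2nk}$. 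For any test function $\eta\in\cC^r$ supported in $(-\infty,T)\times\R^d$, I would declare
\begin{equ}
\langle \hat f, \eta\rangle := \sum_{n\ge 0}\langle f, \chi^n \eta\rangle\;,
\end{equ}
each summand being admissible because $\chi^n \eta$ has support disjoint from $\{t=0\}$; when $\eta$ already avoids $\{t=0\}$ dominated convergence gives $\langle\hat f,\eta\rangle = \langle f,\eta\rangle$, so $\hat f$ genuinely extends $f$.

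To prove the bound \eqref{Eq:BoundDistribSpace}, I would estimate $|\langle f,\chi^n\eta^\lambda_{t,x}\rangle|$ in $L^p(dx)$ by splitting into two regimes. When $2^{-n}\ge\lambda$, the cutoff $\chi^n$ is essentially constant on the parabolic support of $\eta^\lambda_{t,x}$, so $\chi^n\eta^\lambda_{t,x}$ is a uniformly bounded multiple of a single rescaled test function $\tilde\eta^\lambda_{t,x}$ with $\tilde\eta\in\cB^r$, whose time-centre lies in $\{|s|\sim 2^{-2n}\}\gg 3\lambda^2$; the hypothesis on $f$ then applies directly, and by support considerations only $O(1)$ values of $n$ contribute. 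When $2^{-n}<\lambda$, I would further decompose $\chi^n\eta^\lambda_{t,x}$ by a smooth spatial partition of unity at scale $c_0 2^{-n}$ (with $c_0$ small enough to enforce the time-avoidance condition $|s_n|>3(2^{-n})^2$) into $O((\lambda 2^n)^d)$ pieces of the form $c_{n,k}\,\tilde\eta_{n,k}^{2^{-n}}_{s_n,y_k}$ with $\tilde\eta_{n,k}\in\cB^r$ and $|c_{n,k}|\lesssim\lambda^{-|\s|}2^{-n|\s|}$; this bound is a product-rule calculation using $\|\partial^k\chi^n\|_\infty\lesssim 2^{2nk_0}$, $\|\partial^k\eta^\lambda_{t,x}\|_\infty\lesssim\lambda^{-|\s|-|k|}$, and $2^{-n}\le\lambda\le 1$. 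Applying the hypothesis to each piece and summing in $L^p(dx)$ via a Jensen/Minkowski reorganisation of the spatial sum along the lines of the proof of Theorem~\ref{Th:Reconstruction}, together with the support containment $s_n+(2^{-n})^2 \le t+\lambda^2$ which (by monotonicity of $\rw$ in time and by essential constancy of $\rw$ on unit spatial balls) gives $\rw_{s_n+(2^{-n})^2}(y_k)\lesssim \rw_{t+\lambda^2}(x)$, the contribution of scale $n$ in the second regime collapses to $\lambda^{-2}\,2^{-n(2+\alpha)}\rw_{t+\lambda^2}(x)$ after using $|\s|=d+2$. Summing the geometric series over $n>\log_2\lambda^{-1}$ then yields exactly $\lambda^\alpha\rw_{t+\lambda^2}(x)$, provided $2+\alpha>0$.

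For uniqueness, the difference of two extensions vanishes on test functions supported away from $\{t=0\}$ and so is supported on that hyperplane; locally in $x$ the structure theorem forces it into the form $\sum_{k=0}^N \delta_0^{(k)}(t)\otimes T_k(x)$ for some $N$ and some distributions $T_k$. Testing against $\eta^\lambda_{0,x}$ produces contributions of order $\lambda^{-2-2k}$ times the pairing of $T_k$ with a bump in $x$ at scale $\lambda$, which is incompatible with the bound $\lambda^\alpha$ as $\lambda\downarrow 0$ whenever $\alpha > -2$, successively forcing each $T_k$ to vanish. The main obstacle is the bookkeeping in the second regime: one must verify that the $O((\lambda 2^n)^d)$ spatial pieces, the coefficients $|c_{n,k}|\lesssim\lambda^{-|\s|}2^{-n|\s|}$, the factor $(2^{-n})^\alpha$ from the hypothesis, and the $L^p(dx)$ averaging of the weight all conspire to produce the sharp $\lambda^{-2}\,2^{-n(2+\alpha)}\rw_{t+\lambda^2}(x)$; the assumption $\alpha>-2$ enters critically both for convergence of this geometric series (existence) and for killing non-trivial $\delta_0^{(k)}\otimes T_k$-contributions (uniqueness).
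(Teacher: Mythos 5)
Your uniqueness argument (difference of two extensions is supported on $\{t=0\}$, hence locally of the form $\sum_k \delta_0^{(k)}(t)\otimes T_k(x)$, which is incompatible with the $\lambda^\alpha$ bound when $\alpha>-2$) is a legitimate alternative to the paper's route, which instead proves the quantitative estimate that pairing any element of $\ccE^{\alpha,p}_{\rw,T}$ against $\phi_{t,x}$ times a cutoff at scale $2^{-2n_0}$ around $\{t=0\}$ is $O(2^{-n_0(2+\alpha)}\rw_T(x))$; note the paper's version is reused later in its Step 3 to re-expand $\langle f,\eta^\lambda_{t,x}\rangle$ along a second partition, so it is not just a uniqueness device. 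Your counting in the regime $2^{-n}<\lambda$ (pieces $O((\lambda 2^n)^d)$, coefficients $\lambda^{-|\s|}2^{-n|\s|}$, factor $2^{-n\alpha}$, Jensen in $L^p$) is also consistent with the paper's Step 2 and does give $\lambda^{-2}2^{-n(2+\alpha)}$ per scale.

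The genuine gap is in the weight and in the applicability of the hypothesis near the top of the support of $\eta^\lambda_{t,x}$, which is precisely the delicate point of this proposition. Two of your claims do not hold as stated. First, in the regime $2^{-n}\ge\lambda$ the only scales that contribute satisfy $2^{-2n}\in[\lambda^2,4\lambda^2]$ (otherwise $\chi^n$ misses the support), so the time-centre of $\chi^n\eta^\lambda_{t,x}$ is \emph{not} $\gg 3\lambda^2$; it can lie inside the excluded window $[-3\lambda^2,3\lambda^2]$, so the hypothesis does not ``apply directly'', and even when it applies the resulting weight is $\rw_{t'+\lambda^2}$ with a centre $t'$ that may exceed $t$. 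Second, in the regime $2^{-n}<\lambda$ the inequality $s_n+2^{-2n}\le t+\lambda^2$ is not a support containment: the nominal top of the rescaled test function attached to a piece whose support reaches up to $t+\lambda^2$ overshoots by up to $O(2^{-2n})$ for the natural centring, and for the weights of interest ($e^{t(1+|x|)}$-type, which are \emph{not} space-time weights) the ratio $\rw_{t+\lambda^2+2^{-2n}}(x)/\rw_{t+\lambda^2}(x)$ is unbounded in $x$, so the overshoot cannot be absorbed into a constant. Moreover, if you try to force $s_n+(\lambda')^2\le t+\lambda^2$ by re-centring, you run into the competing constraint $|s_n|>3(\lambda')^2$ required to invoke the restricted bound \eqref{Eq:BoundDistribSpace}; at scale $\lambda'\sim\lambda$ these two constraints are incompatible for pieces near the top when $t\in[-3\lambda^2,3\lambda^2]$. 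This is exactly why the paper's proof has a separate Step 3: it multiplies in a second dyadic partition refined towards the level $\{z_0=t+\lambda^2\}$, so that a scale-$2^{-n}$ piece sits at distance at least $4\cdot2^{-2n}$ below $t+\lambda^2$ while staying at distance $\gtrsim 2^{-2n}$ from $\{z_0=0\}$, whence both the avoidance condition and the monotonicity bound $\rw_{s_n+2^{-2n}}\le\rw_{t+\lambda^2}$ hold; the naive partition alone (the paper's Step 2, i.e. your construction) only yields $\rw_{t+3\lambda^2}$. Your proof needs this refinement (or an equivalent careful cutting and re-centring of the top time-slabs adapted to $t+\lambda^2$, with the avoidance condition re-verified, and in any case a time-subdivision of each annulus, since a purely spatial partition does not produce parabolic pieces of scale $2^{-n}$) to reach the stated conclusion.
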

\begin{proof}
The proof is divided into three steps. First, we show uniqueness of the extension. Then, we build the extension but with a non-optimal weight. Finally, we show that the weight can actually be improved. From now on, we let $\chi:\R\rightarrow\R$ be a compactly supported, smooth function such that $\supp\chi \subset [5,\infty)$ and $\sum_{n\in\Z}\chi(2^{2n}s)=1$ for all $s\in(0,\infty)$. We also let $\tilde{\chi}:\R\rightarrow\R$ be a smooth function such that $\supp\tilde\chi \subset [-1,1]$ and $\sum_{k\in\Z}\tilde\chi(x-k)=1$ for all $x\in\R$.

\smallskip

\noindent\textit{Step 1: uniqueness.} Let  For every $n_0\geq 1$, we set $\nu_{n_0}(t) = \sum_{n\leq n_0}\big(\chi (2^{2n}t)+\chi (-2^{2n}t)\big)$. Observe that this function vanishes in $[-5\cdot 2^{-2n_0},5\cdot 2^{-2n_0}]$. We claim that for any $f\in\ccE^{\alpha,p}_{\rw,T}$ and $n_0$ large enough, we have
\begin{equ}\label{Eq:ClaimUniq}
\big| \langle f , \phi_{t,x} (1-\nu_{n_0}) \rangle \big| \lesssim 2^{-n_0(2+\alpha)} \rw_{T}(x)\;,
\end{equ}
uniformly over all $\phi\in\cB^r(\R^{d+1})$ and all $(t,x)\in\R^{d+1}$. Since $2+\alpha > 0$, this claim shows that the knowledge of $f$ away from the hyperplane $\{t=0\}$ is sufficient to characterise $f$. The uniqueness of the statement is then immediate. We now prove the claim. We use the following partition of unity:
\begin{equ}
\sum_{(s,y)\in\Lambda_{n_0}}\psi_{n_0,s,y}(z) = 1\;, \quad \psi_{n_0,s,y}(z)=\tilde{\chi}(2^{2n_0}(z_0-s))\prod_{i=1}^d \tilde{\chi}(2^{n_0}(z_i-y_i))\;.
\end{equ}
Since $(1-\nu_{n_0})$ is supported in some centred interval of length of order $2^{-2n_0}$, we deduce that there exists $C>0$ such that $\phi_{t,x}(1-\nu_{n_0})\psi_{n_0,s,y}$ is identically zero as soon as $|y-x| > C$ and $|s| > C 2^{-2n_0}$, uniformly over all $\phi\in\cB^r$, all $(t,x)\in\R^{d+1}$, all $n_0\geq 0$ and $(s,y)\in\Lambda_{n_0}$. Then, for any $\phi \in\cB^r(\R^{d+1})$ and any $(t,x)\in\R^{d+1}$, we have
\begin{equs}[Eq:tildePhi]
\langle f , \phi_{t,x} (1-\nu_{n_0}) \rangle =  \sum_{(s,y)\in\Lambda_{n_0}} \langle f,\phi_{t,x}(1-\nu_{n_0})\psi_{n_0,s,y}\rangle \;.
\end{equs}
Recall that $|\s| = 2+d$. For all $z\in B(y,2^{-n_0})$, the function $2^{n_0|\s|} \phi_{t,x}(1-\nu_{n_0})\psi_{n_0,s,y}$ can be written as $\eta_{s,z}^{2^{-n_0}}$, for some $\eta\in\cB^r$, up to some factor $C$, where $|C|$ is uniformly bounded over all $\phi\in\cB^r$, all $n_0\geq 0$, all $(s,y)\in\Lambda_{n_0}$ and all $z\in B(y,2^{-n_0})$. Using Jensen's inequality, we thus get
\begin{equs}
{}&\Big|\sum_{(s,y)\in\Lambda_{n_0}} \langle f,\phi_{t,x}(1-\nu_{n_0})\psi_{n_0,s,y}\rangle\Big|\\
&\lesssim \sup_{\substack{s\in 2^{-2n_0}\Z\\|s| \leq C 2^{-2n_0}}} \sum_{\substack{y:(s,y)\in\Lambda_{n_0}\\|y-x| \leq C}} 2^{-n_0(2+d+\alpha)} \frac{\big|\langle f , 2^{n_0|\s|} \phi_{t,x}(1-\nu_{n_0})\psi_{n_0,s,y} \rangle\big|}{2^{-n_0\alpha}}\\
&\lesssim \sup_{\substack{s\in 2^{-2n_0}\Z\\|s| \leq C 2^{-2n_0}}} \sum_{\substack{y:(s,y)\in\Lambda_{n_0}\\|y-x| \leq C}}\int_{z\in B(y,2^{-n_0})} 2^{-n_0(2+\alpha)} \frac{\big|\langle f , 2^{n_0|\s|} \phi_{t,x}(1-\nu_{n_0})\psi_{n_0,s,y} \rangle\big|}{2^{-n_0\alpha}}dz\\
&\lesssim 2^{-n_0(2+\alpha)} \rw_T(x) \sup_{\substack{s\in \R\\|s| \leq C 2^{-2n_0}}}  \bigg(\sum_{\substack{y:(s,y)\in\Lambda_{n_0}\\|y-x| \leq C}} \int_{z\in B(y,2^{-n_0})} \sup_{\eta\in\cB^r}\Big|\frac{\langle f , \eta^{2^{-n_0}}_{s,z} \rangle}{\rw_T(x)2^{-n_0\alpha}}\Big|^p dz\bigg)^{\frac{1}{p}}\\
&\lesssim 2^{-n_0(2+\alpha)} \rw_T(x) \sup_{\substack{s\in \R\\|s| \leq C 2^{-2n_0}}}  \bigg(\int_{z\in B(x,C')} \sup_{\eta\in\cB^r} \Big|\frac{\langle f , \eta^{2^{-n_0}}_{s,z} \rangle}{\rw_T(x)2^{-n_0\alpha}}\Big|^p dz\bigg)^{\frac{1}{p}}\;,
\end{equs}
uniformly over all $\phi\in\cB^r$, all $n_0\geq 0$ and all $(t,x)\in\R^{d+1}$. For all $n_0$ such that $(C+1) 2^{-2n_0} < T$, the term on the right hand side is bounded by (\ref{Eq:BoundDistribSpace}), thus concluding the proof of the claim.

\smallskip

\noindent\textit{Step 2: existence.} Let us now consider a distribution $f$ as in the statement, and let us construct its extension. We use the following partition of the complement of the hyperplane $\{t=0\}$
\begin{equ}\label{Eq:Partition1}
\sum_{n\in\Z} \big(\chi(2^{2n} z_0)+\chi(-2^{2n} z_0)\big) \sum_{(s,y)\in\Lambda_n}\tilde{\chi}\big(2^{2n}(z_0-s)\big)\prod_{i=1}^d\tilde{\chi}\big(2^{n}(z_i-y_i)\big) = 1\;,
\end{equ}
for all $z\in\R^{d+1}$ with $z_0\ne 0$. Then, for all $n\in\Z$ and all $(s,y)\in\Lambda_n$, we set
\begin{equ}\label{Eq:DefPsiPart0}
\psi_{n,s,y}(z) = \big(\chi(2^{2n} z_0)+\chi(-2^{2n} z_0)\big)\tilde{\chi}\big(2^{2n}(z_0-s)\big)\prod_{i=1}^d\tilde{\chi}\big(2^{n}(z_i-y_i)\big) \;.
\end{equ}
We need to define $\langle f, \eta^{\lambda}_{t,x}\rangle$ for all those $\eta\in\cB^r$ and $(t,x)\in\R^{d+1}$ such that $t\in[-3\lambda^2,3\lambda^2]$. The uniqueness part of the statement shows that $f$ should not have any contribution on the hyperplane $\{t=0\}$. This suggests to set
\begin{equs}
\langle f, \eta^\lambda_{t,x} \rangle := \sum_{2^{-n} < \lambda}\sum_{(s,y)\in \Lambda_n} \langle  f , \eta^\lambda_{t,x} \psi_{n,s,y} \rangle\;. \label{Eq:DefExtf}
\end{equs}
Notice that we restricted the sum to those $n$ such that $2^{-n} < \lambda$, since otherwise the product $\eta^\lambda_{t,x} \psi_{n,s,y}$ is identically zero. We only need to check that the right hand side makes sense. First, we notice that for any given $n$, the sum over $s$ in (\ref{Eq:DefExtf}) can be restricted to the set
\begin{equs}
\cS^{t,\lambda}_n = \Big\{s\in 2^{-2n}\Z:\quad &s\in [t-\lambda^2-2^{-2n},t+\lambda^2+2^{-2n}]\;,\\
& B(s,2^{-2n}) \cap \mbox{ supp }\big(\chi(2^{2n}\cdot)+\chi(-2^{2n}\cdot)\big) \ne \emptyset \Big\}\;.
\end{equs}
The cardinality of this set is uniformly bounded in $n\geq 0$. Then, for every $n\geq 0$ such that $2^{-n} < \lambda$, we write
\begin{equs}
{}&\bigg\| \sup_{\eta \in \cB^r}\Big| \sum_{s\in\cS_n^{t,\lambda}}\sum_{y\in 2^{-n}\Z^d} \langle f , \eta^\lambda_{t,x} \psi_{n,s,y} \rangle \Big| \bigg\|_{L^p_{x_0,1}}\\
&\lesssim \sup_{s\in \cS_n^{t,\lambda}}\bigg\| \sup_{\eta \in \cB^r} \sum_{\substack{y\in 2^{-n}\Z^d\\|y-x|\leq \lambda+C2^{-n}}} \int_{u\in B(y,2^{-n})} 2^{n|\s|} \big|\langle f , \eta^\lambda_{t,x} \psi_{n,s,y} \rangle \big|du \bigg\|_{L^p_{x_0,1}}\;,
\end{equs}
where $C>0$ depends on the size of the support of $\psi$, and where we have artificially added the integral over $u$ at the second line. At this point, we use Jensen's inequality, the bound (\ref{Eq:BoundDistribSpace}), and the fact that the function $\eta^\lambda_{t,x}\psi_{n,s,y}$ can be written $C'\big(\lambda2^{n}\big)^{-|\s|} \phi^{2^{-n}}_{s,u}$ for some function $\phi\in\cB^r$ and some constant $C'$, where $|C'|$ is bounded uniformly over all $t,x,s,y,u,n$ as above. This yields
\begin{equs}
{}&\lesssim \sup_{s\in\cS_n^{t,\lambda}}2^{-2n}\lambda^{-2}\Big( \int_{u\in B(x_0,3)} \sup_{\phi \in \cB^r}\Big| \langle f , \phi^{2^{-n}}_{s,u} \rangle\Big|^p du \Big)^{\frac{1}{p}}\\
&\lesssim \lambda^{-2} 2^{-n(2+\alpha)}\rw_{t+3\lambda^2}(x_0)\;, 
\end{equs}
uniformly over all $\lambda\in (0,1]$, all $t\leq \lambda^2$, all $x_0\in\R^d$ and all $n\in\Z$ such that $2^{-n} <\lambda$. To get the last bound, we used the fact that for all $s\in\cS_n^{t,\lambda}$, we have $s>3\cdot 2^{-2n}$ and $s<t+2\lambda^2$. Using the assumption $\alpha > -2$, we deduce that
\begin{equs}
{}&\sum_{2^{-n} <\lambda}\bigg\| \sup_{\eta \in \cB^r}\Big| \sum_{s\in\cS_n^{t,\lambda}}\sum_{y\in 2^{-n}\Z^d} \langle f , \eta^\lambda_{t,x} \psi_{n,s,y} \rangle \Big| \bigg\|_{L^p_{x_0,1}}\lesssim \lambda^{\alpha}\rw_{t+3\lambda^2}(x_0)\;, 
\end{equs}
uniformly over all the parameters. Therefore, we have extended $f$ into a genuine distribution over $\R^{d+1}$, with the right regularity index but with a slightly worse weight than desired.

\smallskip

\noindent\textit{Step 3: optimal bound.} We now show that the weight in the last bound can be replaced by $\rw_{t+\lambda^2}(x_0)$ as required. To that end, we refine the mesh of our partition of unity near the maximal time of the support of the test function. We fix $t,x,\lambda$ and assume that $t \leq 3\lambda^2$. We have:
\begin{equ}\label{Eq:Partition2}
\sum_{n\in\Z} \chi\big(2^{2n}(t+\lambda^2-z_0)\big) \!\!\sum_{(s,y)\in\Lambda_n}\tilde{\chi}\big(2^{2n}(z_0-s)\big)\tilde{\chi}\big(2^{n}(z_1-y_1)\big)\ldots\tilde{\chi}\big(2^{n}(z_d-y_d)\big) = 1\;,
\end{equ}
for all $z\in (-\infty,t+\lambda^2)\times\R^d$. Taking the product of (\ref{Eq:Partition1}) and (\ref{Eq:Partition2}), we deduce the existence of a set $\cS_n^{t,\lambda} \subset \R$ and a collection of smooth functions $\psi_{n,s,y}$, compactly supported in $B\big((s,y), 2^{-n}\big)$, indexed by $(s,y)\in \cS_n^{t,\lambda}\times \big(2^{-n}\Z^d\big)$, such that:\begin{enumerate}
\item For all $z\in\R^{d+1}$ such that $z_0\in (-\infty,0)\cup(0,t+\lambda^2)$,
\begin{equ}
\sum_{2^{-n} <\lambda} \sum_{s\in\cS_n^{t,\lambda}}\sum_{y\in 2^{-n}\Z^d} \psi_{n,s,y}(z) = 1\;.
\end{equ}
\item The number of elements of $\cS_n^{t,\lambda}$ is bounded uniformly over all 
$n\in \Z$, and $\cS_n^{t,\lambda} \subset (-\infty,-4\cdot 2^{-2n}] \cup [4\cdot 2^{-2n},t+\lambda^2-4\cdot 2^{-2n}]$,
\item For all $k\in\N^{d+1}$ with $|k|\leq r$, we have $|D^k \psi_{n,s,y}| \lesssim 2^{n|k|}$ uniformly over all $n\in\Z$ and all $(s,y)\in\cS_n^{t,\lambda}\times \big(2^{-n}\Z^d\big)$.
\end{enumerate}
This allows us to write
\begin{equ}\label{Eq:EtaUnity}
\eta^\lambda_{t,x}(z) = \sum_{2^{-n} < \lambda} \sum_{s\in\cS_n^{t,\lambda}}\sum_{y\in 2^{-n}\Z^d} \eta^\lambda_{t,x}(z)\psi_{n,s,y}(z)\;,
\end{equ}
for all $z\in\R^{d+1}$ with $z_0\ne 0$. In the sum over $y$, the number of elements with a non-zero contribution is of order at most $\big(\lambda 2^{n}\big)^d$. From Step 1, we know that the following equality holds
\begin{equs}
\langle f, \eta^\lambda_{t,x} \rangle = \sum_{2^{-n} < \lambda}\sum_{s\in\cS_n^{t,\lambda}}\sum_{y\in 2^{-n}\Z^d} \langle f , \eta^\lambda_{t,x} \psi_{n,s,y} \rangle\;. \label{Eq:DefRfSmallt2}
\end{equs}
Then, we can apply the calculations made in Step 2, the only difference comes from the set $\cS_n^{t,\lambda}$ whose elements are at distance at least $4\cdot 2^{-2n}$ from $t+\lambda^2$. This ensures the required weight.
\end{proof}
\begin{theorem}[Reconstruction with weights]\label{Th:ReconstructionWeight}
Let $(\cT,\cG,\cA)$ be a regularity structure. Let $\gamma > 0$, $p\in[1,\infty)$, $\alpha:=\min \cA$, $r>|\alpha|$ 
and $(\Pi,\Gamma)$ be a model with the weight $\wPi(x)=(1+|x|)^{\frac{c}{2}},x\in\R^d$. In addition to Assumption \ref{AssumptionWRecons} on the weights, we require that 
$\alpha'=\eta\wedge\alpha-c > -2$ and $\gamma-c > 0$. Then, there exists a unique continuous linear map 
$\cR:\ccD^{\gamma,\eta,p}_{\rw,T}\rightarrow\ccE^{\alpha',p}_{\rw,T}$ such that 
$\langle \cR f, \eta \rangle = 0$ whenever $\eta$ is supported in $(-\infty,0)\times \R^d$, and
\begin{equ}[Eq:BoundReconstructionpLocal]
\bigg\| \sup_{\eta\in\cB^r}\big|\langle\cR f - \Pi_{t,x} f(t,x) , \eta_{t,x}^\lambda \rangle\big| \bigg\|_{L^p_{x_0,1}}\lesssim C \lambda^{\gamma-c}t^{\frac{\eta-\gamma}{2}}\rw_{t+\lambda^2}(x_0) \;,
\end{equ}
uniformly over all $\lambda\in(0,1]$, all $x_0\in\R^d$, all $t\in[3\lambda^2,T-\lambda^2]$, all $f\in\ccD^{\gamma,\eta,p}_{\rw,T}$ and all admissible models $(\Pi,\Gamma)$. Here $C:= \$\Pi\$ (1+\$\Gamma\$)\$f\$$. Furthermore, we have the bound
\begin{equ}[Eq:BoundReconstructionWeight]
\bigg\| \sup_{\eta\in\cB^r}\big|\langle\cR f, \eta_{t,x}^\lambda \rangle\big|\bigg\|_{L^p_{x_0,1}} \lesssim C\lambda^{\alpha\wedge\eta-c}\rw_{t+\lambda^2}(x_0) \;,
\end{equ}
uniformly over all $\lambda\in(0,1]$, all $x_0\in\R^d$, all $t\in(0,T-\lambda^2]$ and all $f\in\ccD^{\gamma,\eta,p}_{\rw,T}$.

If $(\bar{\Pi},\bar{\Gamma})$ is a second model for $\cT$ and if $\bar{\cR}$ is its associated reconstruction operator, then we set
\begin{equ}
\tilde{C}:=\$\Pi\$(1+\$\Gamma\$)\$f;\bar{f}\$+\$\Pi-\bar{\Pi}\$(1+\$\Gamma\$)\$\bar f\$+\$\bar{\Pi}\$\$\Gamma-\bar\Gamma\$\$\bar f\$\;,
\end{equ}
and we have the bound
\begin{equs}[Eq:BoundReconstTwoModelsLoc]
\bigg\| \sup_{\eta\in\cB^r}\big|&\langle \cR f - \bar{\cR}\bar{f} - \Pi_{t,x} f(t,x) + \bar{\Pi}_{t,x}\bar{f}(t,x) , \eta_{t,x}^\lambda \rangle\big|\bigg\|_{L^p_{x_0,1}}\\
&\lesssim \tilde{C}\lambda^{\gamma-c} t^{\frac{\eta-\gamma}{2}}\rw_{t+\lambda^2}(x_0)\;,
\end{equs}
uniformly over all $\lambda\in(0,1]$, all $x_0\in\R^d$, all $t\in(3\lambda^2,T-\lambda^2)$, all $f\in\cD^{\gamma,\eta,p}_{\rw,T}$, all $\bar{f}\in\bar{\ccD}^{\gamma,\eta,p}_{\rw,T}$ and all admissible models $(\Pi,\Gamma)$, $(\bar\Pi,\bar\Gamma)$.
We also have
\begin{equ}[Eq:BoundReconstTwoModels]
\bigg\| \sup_{\eta\in\cB^r}\big|\langle \cR f - \bar{\cR}\bar{f}, \eta_{t,x}^\lambda \rangle\big|\bigg\|_{L^p_{x_0,1}}\lesssim \tilde{C}\lambda^{\alpha\wedge\eta-c}\rw_{t+\lambda^2}(x_0)\;,
\end{equ}
uniformly over the same parameters.
\end{theorem}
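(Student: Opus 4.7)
The plan is to obtain the weighted reconstruction operator $\cR$ as a two-step construction: first define a ``local'' reconstruction $\tilde\cR f$ using Theorem~\ref{Th:Reconstruction} on test functions whose support avoids the hyperplane $\{t=0\}$, declare $\tilde\cR f$ to vanish on test functions supported in $\{t<0\}$, and then invoke Proposition~\ref{Prop:UniqDistr} to extend $\tilde\cR f$ uniquely to an element of $\ccE^{\alpha',p}_{\rw,T}$. Uniqueness of the extension with the property that $\langle \cR f,\eta\rangle=0$ for $\eta$ supported in $(-\infty,0)\times\R^d$ is automatic from the uniqueness part of Proposition~\ref{Prop:UniqDistr}, so the entire argument boils down to proving the quantitative bounds.

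For the local bound (\ref{Eq:BoundReconstructionpLocal}), my plan is to apply Theorem~\ref{Th:Reconstruction} directly: restricted to $s\in[t-2\lambda^2,t+\lambda^2-2^{-2n}]$, which for $t\ge 3\lambda^2$ is bounded below by $\lambda^2$ and above by $t+\lambda^2$, the modelled distribution $f$ lies in the local space $\cD^{\gamma,p}$, and the refined bound (\ref{Eq:RefinedBoundReconstructionp}) applies. The weights are then injected as follows: on $B^n_{\lambda,t,x_0}$ we have $\|\Pi\|_z\le \$\Pi\$\wPi(y)$ and $\|\Gamma\|_{z,z'}\le \$\Gamma\$\wPi(y)$ with $|y-x_0|\le 3$, so that by (\ref{CondP0}) we may replace $\wPi(y)$ by $\wPi(x_0)$ up to a constant; and $\|f\|_{B^n}$ introduces factors of the form $\wi_s(x,\zeta)\,s^{(\eta-\gamma)/2}$, where $s\asymp t$ since $s\in[t-2\lambda^2,t+\lambda^2]$ and $t\ge 3\lambda^2$. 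The product $\wPi(x_0)^2\wi_s(x,\zeta)$ is then controlled by (\ref{CondP1}) applied with reference time $t+\lambda^2$: because the constraint $s\le t+\lambda^2-2^{-2n}$ in the refined box guarantees $(t+\lambda^2)-s\ge 2^{-2n}$, one obtains
\begin{equ}
\wPi(x_0)^2\wi_s(x,\zeta)\,\lesssim\,2^{nc}\,\w_{t+\lambda^2}(x)\;.
\end{equ}
Plugging this into (\ref{Eq:RefinedBoundReconstructionp}) and summing produces
\[ \sum_{2^{-n}\le\lambda}\Big(\tfrac{2^{-n}}{\lambda}\Big)^{\gamma\wedge(r+\alpha)}2^{nc}\,\lesssim\,\lambda^{-c}\;, \]
provided $r$ is chosen large enough and $\gamma>c$ (which is one of our hypotheses), giving the asserted $\lambda^{\gamma-c}t^{(\eta-\gamma)/2}\w_{t+\lambda^2}(x_0)$.

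For the global bound (\ref{Eq:BoundReconstructionWeight}), I would split cases according to $t$. On $t\ge 3\lambda^2$ I write $\cR f=(\cR f-\Pi_{t,x}f(t,x))+\Pi_{t,x}f(t,x)$; the first summand is handled by the local bound just established, and for the second one expands over $\zeta\in\cA_{<\gamma}$ and uses $|\langle\Pi_{t,x}\tau,\eta^\lambda_{t,x}\rangle|\lesssim\$\Pi\$\wPi(x)\lambda^\zeta$ together with the pointwise bound $|f(t,x)|_\zeta\lesssim\$f\$\wun_t(x,\zeta)\,t^{((\eta-\zeta)\wedge 0)/2}$; a short case analysis on the sign of $\zeta-\eta$ (using $t\ge 3\lambda^2$ to convert time factors into powers of $\lambda$) yields $\lambda^\zeta\cdot t^{((\eta-\zeta)\wedge 0)/2}\le\lambda^{\alpha\wedge\eta}$, and one final appeal to (\ref{CondP1}) with reference time $t+\lambda^2$ supplies the factor $\lambda^{-c}\w_{t+\lambda^2}(x_0)$. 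For $t\le-3\lambda^2$ the bound is trivial since $\tilde\cR f=0$ there, and for the intermediate range $t\in(-3\lambda^2,3\lambda^2)$ the bound is obtained as a free by-product of Proposition~\ref{Prop:UniqDistr}, whose hypothesis $\alpha'>-2$ is exactly our assumption on $\eta\wedge\alpha-c$. The two model-comparison estimates (\ref{Eq:BoundReconstTwoModelsLoc}) and (\ref{Eq:BoundReconstTwoModels}) follow by the same route, using the bound (\ref{Eq:BoundReconstructionTwo}) with $K^n$ as given by (\ref{Eq:BoundNormModels}), and the obvious weighted version of the expansion $\Pi_{t,x}f(t,x)-\bar\Pi_{t,x}\bar f(t,x)$.

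The main obstacle is the second step: tracking the weights precisely through (\ref{Eq:RefinedBoundReconstructionp}). In particular, the refinement $s\le t+\lambda^2-2^{-2n}$ (rather than merely $s\le t+\lambda^2$) in the definition of $B^n_{\lambda,t,x_0}$ is indispensable, because with the looser box one would only obtain $\wPi(x_0)^2\wi_s(x,\zeta)\lesssim(t+\lambda^2-s)^{-c/2}\w_{t+\lambda^2}(x)$ without any lower bound on $(t+\lambda^2)-s$, and the series over $n$ would diverge. This is precisely the delicate point illustrated in Figure~\ref{Fig} and motivates the whole reformulation of the reconstruction theorem carried out in Section~\ref{sec:Besov}.
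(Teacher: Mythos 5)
Your proposal is correct and follows essentially the same route as the paper: define the reconstruction locally on test functions avoiding $\{t=0\}$ via Theorem~\ref{Th:Reconstruction}, inject the weights through (\ref{CondP0})--(\ref{CondP1}) using the refined box $B^n_{\lambda,t,x_0}$ (whose constraint $s\le t+\lambda^2-2^{-2n}$ yields exactly the $2^{nc}$ factor and the summability under $\gamma>c$), and then extend and handle the range $t\in(-3\lambda^2,3\lambda^2)$ via Proposition~\ref{Prop:UniqDistr}, with the two-model bounds obtained from (\ref{Eq:BoundReconstructionTwo}). Your explicit derivation of (\ref{Eq:BoundReconstructionWeight}) for $t\ge 3\lambda^2$ by splitting off $\Pi_{t,x}f(t,x)$ is a detail the paper leaves implicit, and it is carried out correctly.
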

Notice that in these statements we lose a factor $\lambda^{-c}$ compared to what one 
would have expected: this is the price we pay for having added weights to our spaces and requiring uniformity in 
space. However, we will see in the sequel that we can choose the constant $c$ as small as we want.
\begin{proof}
We only need to show that there is a unique distribution $\cR f$, on the set of all test functions whose support does not intersect the hyperplane $\{t=0\}$, that fulfills the requirements of the theorem for these test functions. Then, Proposition \ref{Prop:UniqDistr} yields the desired result.\\
First, we set $\langle \cR f , \eta \rangle:=0$ for every $\eta\in\cB^r$ which is supported in the half-space $\{t<0\}$. Second, let $\lambda \in (0,1]$, $x\in\R^d$ and $t\in[3\lambda^2,T-\lambda^2]$. By a simple localisation argument, one can build an element $\tilde{f}\in\cD^{\gamma,p}$ such that $\tilde{f}$ coincides with $f$ in $[t-2\lambda^2,t+\lambda^2]\times B(x,3)$ and vanishes outside $[t-3\lambda^2,t+2\lambda^2]\times B(x,4)$. Indeed, it suffices to lift into the polynomial regularity structure a smooth function equal to $1$ on $[t-2\lambda^2,t+\lambda^2]\times B(x,3)$, and vanishing outside $[t-3\lambda^2,t+2\lambda^2]\times B(x,4)$, and to define $\tilde{f}$ as the product of $f$ with this smooth function (this may require to extend our original regularity structure with the polynomials, and to define the canonical product between elements in the regularity structure and polynomials).

Using the reconstruction theorem in $\cD^{\gamma,p}$, we set $\langle \cR f , \eta^\lambda_{t,x} \rangle := \langle \cR \tilde{f} , \eta^\lambda_{t,x} \rangle$. We now show (\ref{Eq:BoundReconstructionpLocal}). Recall the definition of $B^n=B^n_{\lambda,t,x_0}$ from Theorem \ref{Th:Reconstruction}. Notice that
\begin{equ}
\$\Pi \$_{B^n}\big(1+\$ \Gamma \$_{B^n}\big)\$f \$_{B^n} \lesssim t^{\frac{\eta-\gamma}{2}}\wPi(x_0)^2 \sup_{\zeta}\sup_{i\in\{1,2\}}\wi_{t+\lambda^2-2^{-2n}}(x_0,\zeta)\;,
\end{equ}
uniformly over all $\lambda\in(0,1]$, all $x_0\in\R^d$, all $t\in[3\lambda^2,T-\lambda^2]$, all $f\in\ccD^{\gamma,\eta,p}_{\rw,T}$ and all $n\geq 0$. Using (\ref{CondP1}), we deduce that the right hand side is actually bounded by a term of order $t^{\frac{\eta-\gamma}{2}} \rw_{t+\lambda^2}(x_0) 2^{nc}$ uniformly over all the parameters. Therefore, by (\ref{Eq:BoundReconstructionp}), we deduce that (\ref{Eq:BoundReconstructionpLocal}) holds.\\
This determines the value of $\langle \cR f , \phi \rangle$, for any test function $\phi$ whose support does not intersect the hyperplane $\{t=0\}$. Indeed, any such function can be split into a \textit{finite} sum of functions of the form $\eta^\lambda_{t,x}$, with $t\geq 3\lambda^2$, on which $\cR f$ has already been constructed. It is then simple to check that $\cR f$ is a well-defined distribution on the set of test functions whose support does not intersect the hyperplane $\{t=0\}$. We can apply Proposition \ref{Prop:UniqDistr}, and the statement of the theorem follows.\\
The case of two models is handled similarly, using the bound (\ref{Eq:BoundReconstructionTwo}) from the reconstruction theorem in $\cD^{\gamma,p}$, thus
concluding the proof.
\end{proof}

\section{Convolution with the heat kernel}\label{sec:Schauder}
The goal of this section is to define an operator that plays the role of the convolution with the heat kernel, but at the level of modelled distributions. This will be carried out separately for the singular part $P_+$ and the smooth part $P_-$ of the heat kernel, as defined in Lemma \ref{Lemma:Kernel}. Although such an abstract operator was defined in Section 5 of~\cite{Hairer2014}, the fact that we have incorporated weights in our spaces imposes some additional constraints on this map. The main difficulty comes with the singular part of the kernel $P_+$, which is handled in Theorem \ref{Th:Integration}. The smooth part is simpler, and is addressed in Proposition \ref{Prop:IntegrationSmooth}. We end this section with the treatment of the initial condition.

From now on, we take the following values for the parameters:
\begin{equs}
\alpha = -\frac{3}{2} - \kappa \;,\qquad \eta = -\frac{1}{2}+3\kappa\;,\qquad \gamma = \frac{3}{2}+2\kappa \;.\end{equs}
They fulfill the requirements that $\gamma > -\alpha$ and $\eta-\gamma>-2$. Recall that $\alpha$ is the regularity of the noise, $\eta$ is the regularity of the initial condition and $\gamma$ is the upper bound of the homogeneities involved in the regularity structure.

We also consider, for all $t\in \R$ and all $\zeta\in\cA$, two collections of weights $\wun_t(\cdot,\zeta)$ and $\wde_t(\cdot,\zeta)$ on $\R^d$. Observe that it is meaningful to write $\wi_t(\cdot,\tau)$ to denote $\wi_t(\cdot,|\tau|)$ for any $\tau\in\cT$.

\begin{assumption}[Weights and convolution]\label{AssumptionW}
Let $c>0$ and $\gamma' > 0$. In addition to Assumption \ref{AssumptionWRecons}, we impose that:
\begin{equs}
\wi_t(x,\tau) &\leq \wi_t(x,\cI(\tau\Xi))\;,\label{CondP2}\tag{W-2}\\[2pt]
\wPi(x)\wi_t(x,\tau\Xi)&\leq \wi_t(x,X^k)\;,\qquad \mbox{whenever } |\tau|+\alpha\leq |k|-2\;,\qquad\label{CondP3}\tag{W-3}\\[2pt]
\wPi(x)\wun_t(x,\tau\Xi)&\leq \wde_t(x,X^k) \;,\label{CondP4}\tag{W-4}\\[2pt]
\wi_t(x,\tau\Xi) &= \wi_t(x,\tau)\;,\label{CondP5}\tag{W-5}
\end{equs}
for all $x\in \R^d$, all $s<t \in (-\infty,T]$, all $\tau\in \cU_{<\gamma'}$, all $k\in\N^{d+1}$ such that $|k| < \gamma'$ and all $i\in\{1,2\}$.
\end{assumption}
\noindent Take $\gamma'=\gamma+\alpha+2-c$ with $c \in (0,\frac{\kappa}{2})$. Here is a possible choice of weights satisfying Assumption \ref{AssumptionW}:
\begin{equs}[Eq:Weights]
\wPi(x) &:= (1+|x|)^{\frac{c}{28}(1-2\kappa)}\;,\\
\wun_t(x,\zeta) &:= (1+|x|)^{\frac{c}{14} \zeta} \, e^{t (1+|x|)} \,e^{\ell (1+|x|)} \;,\\
\wde_t(x,\zeta) &:= (1+|x|)^{\frac{c}{14}(\zeta+3)} \, e^{t (1+|x|)}\,e^{\ell (1+|x|)} \;,
\end{equs}
where $\zeta\in\cA_{<\gamma'}(\cU)$ and $\ell$ is a constant which will allow us to consider an initial condition in a weighted space.

\begin{lemma}
Suppose that $u\in\ccD^{\gamma,\eta,p}_{\w,T}(\cU)$. Then, the map $f=u\cdot\Xi$ belongs to the space $\ccD^{\gamma+\alpha,\eta+\alpha,p}_{\w,T}(\cF)$.
\end{lemma}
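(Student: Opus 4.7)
The plan is to observe that multiplication by $\Xi$ is a homogeneity-shift: it sends $\cT_{<\gamma}(\cU)$ isomorphically onto (a subspace of) $\cT_{<\gamma+\alpha}(\cF)$ by $\tau \mapsto \tau\Xi$, with $|\tau\Xi| = |\tau| + \alpha$. So the strategy is to rewrite every quantity appearing in $\$f\$$ as the corresponding quantity for $u$ at a shifted homogeneity and conclude directly from $\$u\$ < \infty$.

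The algebraic key is that $\Gamma\Xi = \Xi$ for every $\Gamma\in\cG$. Indeed, since $|\Xi|=\alpha = \min \cA$, the defining property $\Gamma\Xi -\Xi \in \cT_{<\alpha} = \{0\}$ forces equality. Because the structure group acts on $\cT$ as an algebra morphism with respect to the tree product (see \cite[Sec.~8.1]{Hairer2014}), for any $a\in\cT(\cU)$ one has $\Gamma(a\Xi) = (\Gamma a)\cdot\Xi$. Applied pointwise to $u$ and to the increments in the spatial/temporal translation parts of the $\ccD$-norm, this yields
\begin{equs}
f_t(y) - \Gamma^t_{y,x} f_t(x) &= \bigl(u_t(y) - \Gamma^t_{y,x} u_t(x)\bigr)\,\Xi\;,\\
f(t,x) - \Gamma^x_{t,t-\lambda^2}f(t-\lambda^2,x) &= \bigl(u(t,x) - \Gamma^x_{t,t-\lambda^2}u(t-\lambda^2,x)\bigr)\,\Xi\;.
\end{equs}
Projecting onto $\cT_\zeta(\cF)$ for $\zeta\in\cA(\cF)$ and writing $\zeta' := \zeta - \alpha \in \cA(\cU)$, one obtains $|f_t(x)|_\zeta = |u_t(x)|_{\zeta'}$, and likewise $|f_t(y) - \Gamma^t_{y,x}f_t(x)|_\zeta = |u_t(y)-\Gamma^t_{y,x}u_t(x)|_{\zeta'}$ and similarly in time (up to a harmless constant coming from the identification of norms on $\cT_\zeta(\cF)$ and $\cT_{\zeta'}(\cU)$ through $\tau\mapsto\tau\Xi$).

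It remains to match weights and time exponents. Condition \eref{CondP5} gives $\wi_t(x,\zeta) = \wi_t(x,\tau\Xi) = \wi_t(x,\tau) = \wi_t(x,\zeta')$ for $i\in\{1,2\}$, so the denominators $\wun_t(x,\zeta)$ and $\wde_t(x,\zeta)$ appearing in the $\$f\$$-seminorm are exactly the $\wun_t(x,\zeta')$ and $\wde_t(x,\zeta')$ appearing in $\$u\$$. The exponents of $\lambda$ become $\lambda^{(\gamma+\alpha)-\zeta} = \lambda^{\gamma-\zeta'}$. For the time prefactors,
\begin{equs}
\frac{\bigl((\eta+\alpha)-\zeta\bigr)\wedge 0}{2} = \frac{(\eta-\zeta')\wedge 0}{2}\;,\qquad \frac{(\eta+\alpha)-(\gamma+\alpha)}{2} = \frac{\eta-\gamma}{2}\;,
\end{equs}
which are exactly the exponents provided by the hypothesis $u\in\ccD^{\gamma,\eta,p}_{\w,T}(\cU)$. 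Each of the three bounds defining the norm $\$f\$$ of $f$ as an element of $\ccD^{\gamma+\alpha,\eta+\alpha,p}_{\w,T}(\cF)$ thus reduces to (a constant multiple of) the corresponding bound for $u$, yielding in particular $\$f\$ \lesssim \$u\$$. I do not anticipate any real obstacle; the only minor check is that $\tau\in\cU$ implies $\tau\Xi\in\cF$ for all basis vectors that actually appear (which is immediate from the inductive definition of $\cU$ and $\cF$ in Section~\ref{sec:regstruct}), so that $f$ genuinely takes values in $\cT(\cF)_{<\gamma+\alpha}$.
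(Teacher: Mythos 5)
Your proposal is correct and follows essentially the same route as the paper: the whole argument rests on the identity $\Gamma_{z,z'}(\tau\Xi)=(\Gamma_{z,z'}\tau)\Xi$ (which the paper simply states holds by construction of the structure group, and which you justify via $\Gamma\Xi=\Xi$ and multiplicativity), giving $|f(z)-\Gamma_{z,z'}f(z')|_\zeta=|u(z)-\Gamma_{z,z'}u(z')|_{\zeta-\alpha}$, after which condition \eref{CondP5} and the shift of the exponents $\gamma\mapsto\gamma+\alpha$, $\eta\mapsto\eta+\alpha$ make the three bounds in the definition of $\ccD^{\gamma+\alpha,\eta+\alpha,p}_{\w,T}(\cF)$ reduce to those for $u$. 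Your extra bookkeeping of the weights and time exponents is exactly the ``immediate check'' the paper leaves to the reader.
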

\begin{proof}
By construction, we have $\Gamma_{z,z'} (\tau\Xi) = (\Gamma_{z,z'}\tau) \Xi$ for all $\tau\in\cU$ and all $z,z'\in\R^{d+1}$, 
so that $|f(z)-\Gamma_{z,z'} f(z')|_{\zeta} = |u(z)-\Gamma_{z,z'} u(z')|_{\zeta-\alpha}$ for all $\zeta\in\cA(\cF)$. Using (\ref{CondP5}), it is then immediate to check the statement.
\end{proof}

\subsection{Singular part of the heat kernel}

Let $u$ be an element of $\ccD:=\ccD^{\gamma,\eta,p}_{T,\rw}(\cU)$, and set $f=u\cdot\Xi \in \ccD^{\gamma+\alpha,\eta+\alpha,p}_{T,\rw}$. For any given $\gamma'>0$, we define the abstract convolution map as follows:
\begin{equs}
\big(\cP_+^{\gamma'} f\big)(t,x) &= \cI(f(t,x)) \label{Eq:DefConvAbstract}\\
&\quad+ \sum_{\zeta\in \cA(\cF)}\sum_{|k| < (\zeta+2)\wedge {\gamma'}} {X^k\over k!} \big\langle \Pi_{t,x} \cQ_\zeta f(t,x) , D^{k}P_+\big((t,x)-\cdot\big) \big\rangle\\
&\quad+ \sum_{|k| < {\gamma'}} \frac{X^k}{k!} \big\langle \cR f - \Pi_{t,x} f(t,x) , D^k P_+\big((t,x)-\cdot\big) \big\rangle \;.
\end{equs}
The well-definiteness of this operator is a consequence of the next result, which is the second main technical step of the present work.

\begin{theorem}\label{Th:Integration}
Take $c\in (0,\frac{\kappa}{2})$ and set $\gamma' =\gamma +2+\alpha-c$, $\eta'=\eta+2+\alpha-c$. We assume that $\gamma',\eta'\notin\N$. Let $u\in\ccD=\ccD^{\gamma,\eta,p}_{T,\rw}(\cU)$ and set $f=u\cdot\Xi \in\ccD^{\gamma+\alpha,\eta+\alpha,p}_{T,\rw}(\cF)$. Then, under Assumption \ref{AssumptionW} on the weights, we have $\cP_+^{\gamma'} f \in \ccD':=\ccD^{\gamma',\eta',p}_{T,\rw}(\cU)$ and the bound
\begin{equs}
\$\cP_+ f\$_{\ccD'} \lesssim \$\Pi\$(1+\$\Gamma\$)\$u\$_{\ccD}\;.
\end{equs}
holds uniformly over all $T$ in a compact set of $\R_+$, all $\ell$ in a compact set of $\R$, all $u\in\ccD$ and all admissible models $(\Pi,\Gamma)$. In addition, we have the identity
\begin{equs}[e:commutInt]
\cR \cP_+ f = P_+ * \cR f \;.
\end{equs}
Moreover, if $(\bar \Pi, \bar \Gamma)$ is another model with the same weight $\wPi$ and if $\bar{u}$ belongs to the corresponding space $\bar{\ccD}$ equipped with the same weights $\wun,\wde$, then we have the bound
\begin{equs}
\;&\$\cP_+ f;\bar{\cP}_+ \bar{f}\$_{\ccD', \bar{\ccD}'} \lesssim \$\Pi\$(1+\$\Gamma\$)\$u;\bar{u}\$_{\ccD,\bar{\ccD}}\\
\;&+\big(\$\Pi-\bar{\Pi}\$(1+\$\bar{\Gamma}\$)+ \$\bar\Pi\$\$\Gamma-\bar\Gamma\$\big)\$\bar u\$_{\ccD}\;,
\end{equs}
uniformly over all $T$ in a compact set of $\R_+$, all $\ell$ in a compact set of $\R$, all $\Pi,\bar\Pi,\Gamma,\bar\Gamma$ and all $u,\bar u$.
\end{theorem}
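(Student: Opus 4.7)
The plan is to adapt the Schauder estimate of \cite[Sec.~5]{Hairer2014} by decomposing $P_+ = \sum_{n\geq 0} P_n$ (Lemma~\ref{Lemma:Kernel}) and working scale by scale, while carefully tracking the weights through the assumptions (W-0)--(W-5). I would write $\cP_+^{\gamma'} f = \sum_{n\geq 0} \cP_n f$, where
\begin{equs}
\cP_n f(t,x) &= \cI_n(f(t,x)) + \sum_{\zeta,|k| < \zeta+2} \frac{X^k}{k!} \langle \Pi_{t,x} \cQ_\zeta f(t,x), D^k P_n((t,x)-\cdot)\rangle \\
&\quad + \sum_{|k|<\gamma'} \frac{X^k}{k!} \langle \cR f - \Pi_{t,x} f(t,x), D^k P_n((t,x)-\cdot)\rangle\;,
\end{equs}
with $\cI_n$ the ``kernel-$n$ piece'' of $\cI$, defined so that $\Pi_z \cI_n \tau(z') = \langle \Pi_z \tau, P_n(z'-\cdot)\rangle$ minus the appropriate Taylor remainder of order $|\cI\tau|$. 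One checks first that for each $\zeta' < \gamma'$ the polynomial-coefficient pieces are controlled by at most $2^{-n(\zeta+2-\zeta')}\cdot t^{(\eta-\gamma)/2}$ times a product of the model norm and $\$u\$_\ccD$ in the natural $L^p_x$ sense; the factor $\wPi(x)$ coming from the bound on $\Pi$ paired against a function of spatial support $2^{-n}$ is then absorbed into the weight of $X^k$ (or of $\cI(\tau\Xi)$) using (W-3), (W-4) and (W-5). Because $|\cI\tau|+|\tau\Xi|+2$ agree, the abstract homogeneity $\zeta+2$ that governs the size of $\cP_n f$ aligns with the target space $\cT_{<\gamma'}(\cU)$, and the contribution of $n$ is summable exactly when the resulting exponent is positive, which is the condition $\gamma' \notin \N$.

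For the spatial regularity bound at scale $\lambda$ I would split the sum over $n$ into $2^{-n}\geq \lambda$ (where a direct bound using the smoothness of $P_n$ works, yielding $2^{n|k|}$-growth for derivatives) and $2^{-n} < \lambda$ (where one performs a Taylor expansion in the base point and uses the translation bounds on $f$ and $\Gamma$, crucially bounding $|f(t,y)-\Gamma^t_{y,x}f(t,x)|_\zeta$ by an average on $B(x,\lambda)$ via the second bound in the definition of $\ccD^{\gamma,\eta,p}$). The third term of $\cP_n f$ is estimated by feeding $D^k P_n$ (of ``scale'' $2^{-n}$) into the reconstruction bound \eqref{Eq:BoundReconstructionpLocal}, which pays $\lambda^{-c}$ and thereby forces the shift $\gamma'=\gamma+2+\alpha-c$. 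The time-regularity bound is analogous, using the third estimate in the definition of $\ccD^{\gamma,\eta,p}_{T,\rw}$ for the analogous comparison at $(t,x)$ vs $(t-\lambda^2,x)$; this is where the separate treatment of space and time translations in the definition of our modelled distribution spaces pays off.

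The weight bookkeeping is the main obstacle. At each scale $n$ the model bound produces a factor $\wPi(x)$ at the evaluation point of $\Pi_{z}$, and near $t=0$ one has to tolerate an extra factor $\wPi(x)^2$ in front of $\wi_s$. This is absorbed using (W-1), which trades it for $(t-s)^{-c/2}$; in the scale-by-scale splitting this converts cleanly into the claimed $\lambda^{-c}$ loss, and the time factor $t^{(\eta-\gamma)/2}$ is unchanged at the level of $\ccD^{\gamma',\eta',p}$ because $\eta'-\gamma' = \eta - \gamma$. The inequalities (W-2)--(W-5) then ensure that each weight produced at the right of the above estimates is bounded by the weight needed at the left, after applying $\cI$ (which shifts from $\tau\Xi$ to $\cI(\tau\Xi)$, handled by (W-2) and (W-5)) or after producing a pure polynomial term $X^k$ (handled by (W-3) and (W-4)); one must be careful that the polynomial terms require the stronger $\wde$ weight when comparing against the spatial translation norm, which is exactly the role of (W-4). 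One also has to check uniformity as $T$ and $\ell$ vary in compact sets, which follows because the constants in (W-0)--(W-5) can be taken uniform in those ranges, as noted after the definition of the exponential weight family.

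The commutation identity \eqref{e:commutInt} follows from the uniqueness part of Proposition~\ref{Prop:UniqDistr}: by construction of $\cP_+ f$ one has, for any $z$ and any test function $\phi^\lambda_z$ with support away from $\{t=0\}$, the identity
\begin{equ}
\langle \Pi_z \cP_+ f(z), \phi^\lambda_z\rangle = \langle P_+ * \cR f, \phi^\lambda_z\rangle - R_z(\phi^\lambda_z)\;,
\end{equ}
where $R_z$ is the order-$\gamma'$ Taylor remainder of $P_+ * \cR f$ around $z$; feeding this into the reconstruction bound for $\cP_+ f$ shows that $P_+ * \cR f$ satisfies \eqref{Eq:BoundReconstructionpLocal} for $\cP_+ f$, so by the uniqueness in Theorem~\ref{Th:ReconstructionWeight} it must equal $\cR \cP_+ f$. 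Finally, the two-model bound is obtained by repeating the scale-by-scale estimates on the difference $\cP_+ f - \bar{\cP}_+ \bar f$, using in each of the three components the bilinear decomposition producing the three terms appearing in $\tilde{C}$, and using \eqref{Eq:BoundReconstTwoModelsLoc} in place of \eqref{Eq:BoundReconstructionpLocal} for the third term; the weight analysis is identical.
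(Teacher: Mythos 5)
Your overall architecture (decomposition $P_+=\sum_n P_n$, scale-by-scale Schauder argument, weight bookkeeping via (W-1)--(W-5), uniqueness of the weighted reconstruction for \eqref{e:commutInt}, bilinearity for the two-model bound) is the same as the paper's, but the step carrying the whole difficulty of this theorem does not close as you have written it. You propose to estimate the terms $\langle \cR f-\Pi_{t,x}f(t,x),D^kP_n((t,x)-\cdot)\rangle$ by ``feeding $D^kP_n$ into \eqref{Eq:BoundReconstructionpLocal}''. Viewing $D^kP_n((t,x)-\cdot)$ as a test function of scale $\lambda=2^{-n}$ centred at $(t,x)$, that bound produces the weight $\rw_{t+2^{-2n}}(x)$, whereas the target $\ccD'$-norm requires the weight at time $t$ (namely $\wun_t(x,|k|)$, resp.\ $\wde_t$). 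For the exponential weights $e^{t(1+|x|)}$ the ratio $\rw_{t+2^{-2n}}(x)/\wun_t(x,\cdot)\sim e^{2^{-2n}(1+|x|)}$ is unbounded in $x$ for every fixed $n$, so this loss cannot be absorbed, and (W-1) does not repair it: (W-1) trades powers of $\wPi$ against a factor $(t-s)^{-c/2}$ for a comparison at an \emph{earlier} time $s<t$, not against a weight evaluated at a \emph{later} time. This is precisely the feature (weights increasing in time, not comparable across time shifts) that the whole construction is designed around.

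The missing idea is to exploit that $P_n((t,x)-\cdot)$ is supported in the past $\{s\le t\}$, i.e.\ is of the form $\eta^{2^{-n}}_{t,x}$ with $\eta\in\cB^r_-$, and to apply the weighted reconstruction bound with the base point shifted \emph{backwards} in time: this yields the two inputs \eqref{Eq:I1} and \eqref{Eq:I2}, in which the weight is taken at time $t$ but the comparison term is $\Pi_{t-\lambda^2,x}f(t-\lambda^2,x)$ rather than $\Pi_{t,x}f(t,x)$. One must then, at every scale $n$ with $t\gtrsim 2^{-2n}$, recentre the expansion at $t_n=t-O(2^{-2n})$ and control the resulting commutator terms $f(t,\cdot)-\Gamma^x_{t,t_n}f(t_n,\cdot)$ (via the time-translation part of the $\ccD^{\gamma,\eta,p}$-norm) and $\Pi_{t_n,x}\cQ_\zeta f(t_n,x)$ (via (W-1), at the price of a factor $2^{nc}$, summable only because $c<\kappa/2$); for $t\lesssim 2^{-2n}$ one must instead use \eqref{Eq:I1} and a separate argument, which is also what produces the correct singular factor $t^{(\eta'-\gamma')/2}$, relying on $\eta'\notin\N$ (the non-integrality is there to avoid logarithmic sums, not to make an exponent positive as you state). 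So in addition to your dyadic splitting $2^{-n}\lessgtr\lambda$, a second case analysis comparing $t$ with $2^{-2n}$ (and $\lambda^2$) is indispensable, and the refined form \eqref{Eq:RefinedBoundReconstructionp} of the reconstruction bound, which only uses values of $f$ strictly before the top of the support, is exactly what makes the time-$t$ weight in \eqref{Eq:I1}--\eqref{Eq:I2} legitimate. Your treatment of the commutation identity and of the two-model bound is otherwise in line with the paper.
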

Before we proceed to the proof of the theorem, we collect a few technical facts. Let us denote by $\cB^r_-$ the subset of $\cB^r$ whose elements are supported in the half-space $\{t\leq 0\}$. Using Theorem \ref{Th:ReconstructionWeight}, we immediately get\begin{equ}\label{Eq:I1}
\bigg\| \sup_{\eta \in \cB^r_-} \Big|\frac{\langle \cR f, \eta^{\lambda}_{t,x}\rangle}{\rw_t(x)} \Big| \bigg\|_{L^p} \lesssim \lambda^{\eta+\alpha-c}\;,
\end{equ}
uniformly over all $t\in (0,T]$, all $\lambda\in(0,1]$ and all $f\in\ccD^{\gamma+\alpha,\eta+\alpha,p}_{T,\rw}$, as well as
\begin{equ}\label{Eq:I2}
{}\bigg\| \sup_{\eta \in \cB^r_-} \Big|\frac{\langle \cR f- \Pi_{t-\lambda^2,x} f(t-\lambda^2,x), \eta^{\lambda}_{t,x}\rangle}{\rw_t(x)} \Big| \bigg\|_{L^p} \lesssim \lambda^{\gamma+\alpha-c} t^{\frac{\eta-\gamma}{2}}\;,
\end{equ}
uniformly over all $t\in [4\lambda^2,T]$, all $\lambda\in(0,1]$ and all $f\in\ccD^{\gamma+\alpha,\eta+\alpha,p}_{T,\rw}$. These two bounds will be applied repeatedly to the function $P_0\big((t,x)-\cdot\big) \in \cB^r_-$ as well as its rescalings $P_n$, $n\geq 0$.

For all $z,z' \in \R^{d+1}$, all $k\in\N^{d+1}$ such that $|k| < \gamma'$, and all $n\geq 0$, we define
\begin{equs}
P_{n;z,z'}^{k,\gamma'}(\cdot) := D^k P_n(z-\cdot) - \sum_{\ell:|k|+|\ell| < \gamma'} \frac{(z-z')^\ell}{\ell !} D^{k+\ell} P_n(z'-\cdot)\;.
\end{equs}
Using the classical Taylor formula, one obtains the following identities:
\begin{equs}[Eq:TaylorTime]
{}&P_{n;tx,sx}^{k,\gamma'}(\cdot) =\sum_{\substack{\ell=(\ell_0,0,\ldots,0)\\ \gamma' < |k|+2\ell_0 < \gamma'+2}}(t-s)^\ell\\
&\int_0^1 (1-u)^{|\ell|-1}\frac{|\ell|}{\ell!} D^{k+\ell} P_n\big((s+u(t-s),x)-\cdot\big) du\;,
\end{equs}
and
\begin{equs}[Eq:TaylorSpace]
{}&P_{n;ty,tx}^{k,\gamma'}(\cdot) =\sum_{\substack{\ell=(0,\ell_1,\ldots,\ell_d)\\ \gamma' < |k|+|\ell| < \gamma'+1}}(y-x)^\ell\\
&\int_0^1 (1-u)^{|\ell|-1}\frac{|\ell|}{\ell!} D^{k+\ell} P_n\big((t,x+u(y-x))-\cdot\big) du\;,
\end{equs}
for all $(t,x),(s,y) \in \R^{d+1}$. In these equations and later on in the proof of the theorem, we use the notation $(y-x)^\ell$ and $(t-s)^\ell$ for $(z-z')^\ell$ where $z=(0,y)$, $z'=(0,x)$ in the first case, and $z=(t,0)$, $z'=(s,0)$ in the second case. Notice that in the two formulae (\ref{Eq:TaylorTime}) and (\ref{Eq:TaylorSpace}), we do not consider space and time translations simultaneously. For space-time translations, the situation is slightly more involved due to the scaling $\s$ so we rely on the following result.
\begin{lemma}[Prop 11.1~\cite{Hairer2014}]\label{Lemma:TaylorSpaceTime}
Let $\partial \gamma'$ be the set of indices
\begin{equ}
\{\ell'\in\N^{d+1}: |\ell'| >\gamma', |\ell'-e_{\m(\ell')}| < \gamma'\} \;,
\end{equ}
where $e_i$ is the unit vector of $\R^{d+1}$ in the direction $i\in\{0,\ldots,d\}$, and $\m(\ell'):=\inf\{i: \ell'_i \ne 0\}$. For all $z,z'\in\R^{d+1}$ and all $k\in\N^{d+1}$ such that $|k|<\gamma'$, we have
\begin{equs}
P_{n;z,z'}^{k,\gamma'}(\cdot) &= \sum_{\ell:k+\ell \in \partial \gamma'} \int_{\R^{d+1}} D^{k+\ell} P_n(z'+h-\cdot) \mu^{k+\ell}(z-z',dh)\;.
\end{equs}
Here, $\mu^{k+\ell}(z-z',dh)$ is a signed measure on $\R^{d+1}$, supported in the set $\{\tilde{z}\in\R^{d+1}: \tilde{z}_i \in [0,z_i-z'_i]\}$ and whose total mass is given by $\frac{(z-z')^{k+\ell}}{(k+\ell)!}$.
\end{lemma}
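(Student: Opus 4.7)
The lemma is a $\s$-scaled Taylor expansion with integral remainder for the smooth map $z \mapsto D^k P_n(z-y)$ around $z=z'$: the quantity $P_{n;z,z'}^{k,\gamma'}(y)$ is precisely this Taylor remainder truncated at polynomial $\s$-degree $|k+\ell|<\gamma'$, and the goal is to represent it as an integral against signed measures $\mu^{k+\ell}$ supported in the product box $\prod_{i=0}^{d}[0, z_i - z'_i]$ (interpreted with the standard signed-integration convention when some $z_i - z'_i$ is negative).

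My strategy is to first prove the following abstract Taylor statement: for any smooth $h:\R^{d+1}\to\R$, any $z,z'\in\R^{d+1}$, and any $M>0$,
\begin{equ}
h(z) - \sum_{|\ell|<M}\frac{(z-z')^\ell}{\ell!}D^\ell h(z') = \sum_{\ell\in\partial M}\int_{\R^{d+1}}D^\ell h(z'+w)\,\nu^\ell(z-z', dw),
\end{equ}
where $\partial M := \{\ell \in \N^{d+1}: |\ell| > M,\ |\ell - e_{\m(\ell)}| < M\}$ and each $\nu^\ell$ is a signed measure supported in $\prod_{i=0}^{d}[0, z_i-z'_i]$ of total mass $\frac{(z-z')^\ell}{\ell!}$. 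Once this is established, I deduce the lemma by applying the abstract statement to $h(z):=P_n(z-y)$ at level $M:=\gamma'$ and differentiating in $z^k$ on both sides. The derivative $D^k_z$ acting on the polynomial $\frac{(z-z')^{\ell'}}{\ell'!}$ selects only terms with $\ell' \geq k$, reproducing the polynomial part of $P_{n;z,z'}^{k,\gamma'}$; on the remainder side it acts on the measures, producing $\mu^{\ell'}(z-z', dw) := D^k_z \nu^{\ell'}(z-z', dw)$, which remains supported in the box (with possible boundary Dirac contributions when $D^k_z$ differentiates the indicator $\mathbf{1}_{[0, z_i-z'_i]}$).

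I would construct the $\nu^\ell$ by iterated one-dimensional Taylor expansion with integral remainder, processing the coordinates in the order $d, d-1, \ldots, 0$. The parabolic scaling $\s_0 = 2 > \s_j = 1$ for $j \geq 1$ forces this order: the time coordinate must be handled last for the remainder indices to coincide with $\partial M$. At stage $i$, each retained polynomial branch carries partial indices $(n_{i+1}, \ldots, n_d)$ with accumulated $\s$-degree $D := \sum_{j>i} \s_j n_j < M$; one applies the one-dimensional Taylor formula in coordinate $i$ at order $N_i := \min\{n \in \N : \s_i n + D \geq M\}$, registering the remainder with multi-index $\ell = (0, \ldots, 0, N_i, n_{i+1}, \ldots, n_d)$ and explicit measure
\begin{equ}
\nu^\ell(z-z', dw) = \bigg(\prod_{j<i}\delta_0(dw_j)\bigg) \frac{(z_i-z'_i-w_i)^{N_i-1}}{(N_i-1)!} \mathbf{1}_{[0, z_i-z'_i]}(w_i)\,dw_i \bigg(\prod_{j>i}\frac{(z_j-z'_j)^{n_j}}{n_j!}\delta_0(dw_j)\bigg).
\end{equ}
Each such $\ell$ satisfies $\m(\ell) = i$, $|\ell| \geq M$ by minimality of $N_i$, and $|\ell - e_{\m(\ell)}| = \s_i (N_i - 1) + D < M$, hence $\ell \in \partial M$. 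Conversely, every $\ell \in \partial M$ arises from the unique branch $n_j = \ell_j$ for $j > \m(\ell)$, which is genuinely retained at each earlier stage because $\sum_{m \geq j}\s_m \ell_m \leq |\ell| - \s_{\m(\ell)} < M$ (using $\ell_{\m(\ell)} \geq 1$ and the defining condition of $\partial M$). The support property is immediate, and the total mass $\frac{(z-z')^\ell}{\ell!}$ follows from $\int_0^{z_i-z'_i}\frac{(z_i-z'_i-w)^{N_i-1}}{(N_i-1)!}\,dw = \frac{(z_i-z'_i)^{N_i}}{N_i!}$ combined with the polynomial factors from retained coordinates.

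The hard part is the combinatorial bookkeeping needed to verify the bijection between remainder indices produced by the iteration and the set $\partial M$, together with the correct support and total mass for each $\nu^\ell$. This hinges on the monotonicity of the parabolic scaling $\s_0 \geq \s_j$ for $j \geq 1$; for a non-monotone scaling the coordinate ordering of the iteration would need to be adapted accordingly. The subsequent passage to the lemma via $D^k_z$ differentiation is then a routine algebraic identification.
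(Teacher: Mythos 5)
Your Step 1 is correct: the iterated one-dimensional Taylor expansion with integral remainders, processing the space coordinates first and time last, together with your verification of the bijection of the remainder indices with $\partial M$, of the supports, and of the masses $\frac{(z-z')^\ell}{\ell!}$, is essentially the proof of the result the paper actually invokes here (\cite{Hairer2014}, Prop.~11.1/A.1, specialised to the lower set $\{\ell:|\ell|<M\}$); the paper itself gives no proof beyond that citation. One small mis-attribution: what forces time to be processed last is the convention $\m(\ell)=\inf\{i:\ell_i\neq 0\}$ in the definition of $\partial M$, not the monotonicity of the scaling; monotonicity is needed elsewhere, see below.

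The genuine gap is in Step 2, the passage from $k=0$ to general $k$ by applying $D^k_z$ to the level-$\gamma'$ identity. First, the differentiated identity still carries remainder terms for \emph{all} $\ell'\in\partial\gamma'$, not only $\ell'=k+\ell\geq k$, and these extra terms do not all vanish: if $k_{\m(\ell')}>\ell'_{\m(\ell')}$ while $k_j\leq\ell'_j$ for $j\neq\m(\ell')$, then $D^k_z\nu^{\ell'}(z-z',\cdot)$ is a derivative of a Dirac mass at the moving endpoint (not a measure), and it multiplies $D^{\ell'}P_n$ with $\ell'$ not of the form $k+\ell$; for instance $d+1=2$, $\s=(2,1)$, $\gamma'=4.5$, $k=(2,0)$, $\ell'=(1,3)\in\partial\gamma'$. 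Second, even for the surviving terms the total mass of $D^k_z\nu^{\ell'}$ is $\frac{(z-z')^{\ell'-k}}{(\ell'-k)!}$, not $\frac{(z-z')^{\ell'}}{\ell'!}$: already in one dimension with $\gamma'=2.5$, $k=1$ the remainder equals $\int_0^{z-z'}(z-z'-h)f'''(z'+h)\,dh$, whose mass is $\frac{(z-z')^2}{2!}$, so no differentiation-based bookkeeping can reproduce the exponent $\frac{(z-z')^{k+\ell}}{(k+\ell)!}$ appearing in the statement (that normalisation is the $k=0$ one; note the paper only ever uses the lemma with $k=0$, in the fourth step of the proof of Theorem \ref{Th:Integration}). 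The repair is to not differentiate at all: apply your Step 1 machinery directly to the already-differentiated kernel $w\mapsto D^kP_n(w-y)$ at threshold $M=\gamma'-|k|$, i.e.\ run the same iteration for the lower set $\{\ell:|k|+|\ell|<\gamma'\}$. The remainder indices then satisfy $|k+\ell|>\gamma'$ and $|k+\ell-e_{\m(\ell)}|<\gamma'$, hence $k+\ell\in\partial\gamma'$ because $\s_{\m(k+\ell)}\geq\s_{\m(\ell)}$ --- this is where the monotonicity $\s_0\geq\s_i$ is genuinely used --- and one obtains measures supported in the required box with total mass $\frac{(z-z')^\ell}{\ell!}$ (setting the measures for the remaining indices of $\{\ell:k+\ell\in\partial\gamma'\}$ to zero), which is exactly what the bound (\ref{Eq:ObservationSpaceTime}) requires.
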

For the sake of readibility, we drop the superscript $\gamma'$ in the operator $\cP_+^{\gamma'}$.
\begin{proof}[of Theorem \ref{Th:Integration}]
From now on, the symbol $\lesssim$ will be taken uniformly over all $\ell$ in a given compact set of $\R$ and all $T$ in a given compact set of $\R_+$. Also, the implicit constant associated to this symbol always dominates the constant of (\ref{CondP1}) as well as all the constants associated with Definition \ref{Def:Weights} for the corresponding weights. We provide a complete proof of the statement concerning a single model. To prove the part with two different models, the arguments work almost verbatim given the following two identities:
\begin{equs}
\Pi_z \cQ_\zeta a - \bar\Pi_z\cQ_\zeta\bar{a} &= \Pi_z\cQ_\zeta\big(a-\bar{a}\big) + \big(\Pi_z-\bar\Pi_z\big)\cQ_\zeta \bar a\;,\\
\big(\Pi_{z'} \cQ_\zeta \Gamma_{z',z}-\bar\Pi_{z'}\cQ_\zeta\bar\Gamma_{z',z}\big)\bar a &= \Pi_{z'} \cQ_\zeta \big(\Gamma_{z',z}-\bar\Gamma_{z',z}\big)\bar a + \big(\Pi_{z'}-\bar\Pi_{z'}\big)\cQ_\zeta\bar\Gamma_{z',z}\bar a\;.
\end{equs}
Let $u\in\ccD$ and set $f=u\cdot\Xi$. For simplicity, we assume that $\$u\$=1$. The proof is divided into four steps. We will use repeatedly Lemma \ref{Lemma:Kernel} without further mention.\\
For all $n\geq 0$ and all $(t,x)\in (0,T]\times\R^d$, we define
\begin{equs}
\big(\cP_n^{\gamma'} f\big)(t,x) &:= \sum_{\zeta\in \cA(\cF)}\sum_{|k| < (\zeta+2)\wedge {\gamma'}} {X^k\over k!} \big\langle \Pi_{t,x} \cQ_\zeta f(t,x) , D^{k}P_n\big((t,x)-\cdot\big) \big\rangle\\
&\quad+ \sum_{|k| < {\gamma'}} \frac{X^k}{k!} \big\langle \cR f - \Pi_{t,x} f(t,x) , D^k P_n\big((t,x)-\cdot\big) \big\rangle \;.
\end{equs}
We will make sense of (\ref{Eq:DefConvAbstract}) by showing that the series of the coefficients on the monomials of $\big(\cP_n^{\gamma'} f\big)(t,x)$ is absolutely convergent. We distinguish three types of terms in the $\ccD$-norm: the \textit{local terms} that appear at the first line of (\ref{Eq:NormccD}), the terms of \textit{translation in space} at the second line, and the terms of \textit{translation in time} at the third line. We argue differently for each of them.


\medskip
\noindent\textit{First step: local terms.} For all non-integer values $\zeta\in \cA_{<\gamma'}(\cU)$, we have:
\begin{equs}
\bigg\| \frac{|\cI f(t,x)|_{\zeta}}{t^{\frac{(\eta'-\zeta)\wedge 0}{2}}\wun_t(x,\zeta)}\bigg\|_{L^p} \lesssim \bigg\| \frac{|u(t,x)|_{\zeta-2-\alpha}}{t^{\frac{(\eta-\zeta+2+\alpha)\wedge 0}{2}}\wun_t(x,\zeta-2-\alpha)}\bigg\|_{L^p} \leq 1\;,
\end{equs}
where we have used Condition (\ref{CondP2}) and the fact that $\eta'-\zeta$ and $\eta'+c-\zeta$ have the same sign. Therefore, the desired bound follows.

We turn to the integer levels $k$ such that $|k| < \gamma'$. We distinguish two sub-cases. First, if $t\leq 4\cdot 2^{-2n}$, we write $k!\cQ_k(\cP_n f)(t,x)$ as:
\begin{equs}[Eq:Punctual1]
\langle\cR f , D^kP_n\big((t,x)-\cdot\big)\rangle -\sum_{\zeta \leq |k|-2}\langle \Pi_{t,x} \cQ_{\zeta}f(t,x) , D^kP_n\big((t,x)-\cdot\big) \rangle\;.\;\;
\end{equs}
Using $(\ref{Eq:I1})$, we get
\begin{equ}
\bigg\| \frac{\big\langle \cR f, D^kP_n\big((t,x)-\cdot\big) \big\rangle}{\wun_t(x,|k|)} \bigg\|_{L^p} \lesssim 2^{-n(\eta'-|k|)}\;,
\end{equ}
uniformly over all the corresponding $n$ and $t$. Since $\eta'\notin\N$, the sum over these $n$ yields a bound of order $t^{\frac{(\eta'-|k|)\wedge 0}{2}}$, as required. We now bound the second term of (\ref{Eq:Punctual1}). When $\zeta=|k|-2$,
this term has a zero contribution since $P_n$ kills polynomials of degree $r$. On the other hand, we use (\ref{CondP3}) to get for all $\zeta < |k|-2$
\begin{equ}
\bigg\| \frac{\big\langle \Pi_{t,x} \cQ_{\zeta}f(t,x), D^kP_n\big((t,x)-\cdot\big) \big\rangle}{\wun_t(x,|k|)} \bigg\|_{L^p} \lesssim 2^{-n(2+\zeta-|k|)}t^{\frac{\eta+\alpha-\zeta}{2}}\;,
\end{equ}
uniformly over all the corresponding $n$ and $t$. Summing over all the corresponding $n$ yields a bound of the required order.

We now treat the case $t\geq 4\cdot 2^{-2n}$. We set $t_n=t-2^{-2n}$, and write $k!\cQ_k(\cP_n f)(t,x)$ as:
\begin{equs}[Eq:Punctual2]
{}&\langle \cR f-\Pi_{t_n,x} f(t_n,x) , D^kP_n\big((t,x)-\cdot\big)\rangle\\
&-\sum_{\zeta \leq |k|-2}\langle \Pi_{t,x} \cQ_{\zeta}\big(f(t,x)-\Gamma_{t,t_n}^xf(t_n,x)\big) , D^kP_n\big((t,x)-\cdot\big)\rangle\\
&+\sum_{\zeta > |k|-2}\langle \Pi_{t_n,x}\cQ_\zeta f(t_n,x) , D^kP_n\big((t,x)-\cdot\big)\rangle\;.
\end{equs}
The first and second terms can be treated easily using (\ref{Eq:I2}) and (\ref{CondP3}) respectively. We now deal with the third term. Using (\ref{CondP1}), we get for all $\zeta > |k|-2$
\begin{equs}
{}&\bigg\|\frac{\langle \Pi_{t_n,x}\cQ_\zeta f(t_n,x) , D^kP_n\big((t,x)-\cdot\big)\rangle}{\wun_t(x,|k|)}\bigg\|_{L^p} \lesssim t^{\frac{\eta+\alpha-\zeta}{2}}2^{-n(2+\zeta-|k|-c)}\;,
\end{equs}
uniformly over all $n$ such that $t\geq 4\cdot 2^{-2n}$. Since $c < \kappa/2$, we have $2+\zeta-|k| - c > 0$, so that the sum over these $n$ yields the required bound.

\medskip
\noindent\textit{Second step: translation in space.} We now look at $(\cP_+^{\gamma'} f)(t,y)-\Gamma_{y,x}^t(\cP_+^{\gamma'} f)(t,x)$ with $|x-y| \leq 1$. If $\zeta\in\cA_{<\gamma'}(\cU)\backslash\N$, then the only contribution comes from $\cI$ and we have:
\begin{equs}
{}&\bigg\|\frac{\int_{y\in B(x,\lambda)}\lambda^{-d}|\cI \big(f(t,y) - \Gamma_{y,x}^t f(t,x)\big)|_\zeta dy}{t^{\frac{\eta'-\gamma'}{2}}\lambda^{\gamma'-\zeta}\wde_t(x,\zeta)}\bigg\|_{L^p}\\
\lesssim &\; \bigg\|\frac{\int_{y\in B(x,\lambda)}\lambda^{-d}| \big(u(t,y) - \Gamma_{y,x}^t u(t,x)\big)|_{\zeta-\alpha-2}dy}{t^{\frac{\eta-\gamma}{2}}\lambda^{\gamma-\zeta+\alpha+2}\wde_t(x,\zeta-\alpha-2)}\bigg\|_{L^p}\;,
\end{equs}
where we have used (\ref{CondP2}) and the identity $\eta'-\eta=\gamma'-\gamma=2+\alpha-c$ with $c>0$. The required bound follows.

We turn to the integer levels $k$ with $|k|< \gamma'$. We first treat the case $\lambda^2 \leq t \leq 36\cdot 2^{-2n}$. By Taylor's formula, we write $k!\cQ_k\big((\cP_n f)(t,y)-\Gamma_{y,x}^t (\cP_n f)(t,x)\big)$ as:
\begin{equs}[Eq:Transla1]
{}& \langle \cR f , P_{n;ty,tx}^{k,\gamma'}\rangle -\langle \Pi_{t,x} f(t,x), P_{n;ty,tx}^{k,\gamma'} \rangle\\
&-\sum_{\zeta \leq |k| - 2} \langle \Pi_{t,y}\cQ_\zeta\big(f(t,y)-\Gamma_{y,x}^t f(t,x)\big) , D^k P_n\big((t,y)-\cdot\big) \rangle \;.
\end{equs}
Using (\ref{Eq:TaylorSpace}), we deduce that for any distribution $g$, we have
\begin{equ}\label{Eq:Observation}
\big|\big\langle g, P_{n;ty,tx}^{k,\gamma'} \big\rangle\big| \lesssim \sup_{\eta\in\cB^r_-} \big|\big\langle g, \eta_{t,x}^{C2^{-n}} \big\rangle\big| |y-x|^{\lceil \gamma'\rceil-|k|} 2^{-n(2-\lceil \gamma'\rceil)}\;,
\end{equ}
uniformly over all $y\in B(x,\lambda)$ and all $n\geq 0$, for some constant $C$ independent of everything. 
Using (\ref{Eq:I1}), we thus get
\begin{equ}
\bigg\| \frac{ \int_{y\in B(x,\lambda)} \lambda^{-d} \big|\big\langle \cR f, P_{n;ty,tx}^{k,\gamma'} \big\rangle\big| dy}{\wde_t(x,|k|)} \bigg\|_{L^p} \lesssim \lambda^{\lceil \gamma'\rceil-|k|}2^{-n(\eta'-\lceil \gamma' \rceil)}\;,
\end{equ}
uniformly over all $\lambda^2\leq t \leq 36\cdot 2^{-2n}$. Since $\eta'-\gamma' < 0$, the sum over all these $n$ yields a bound of order $t^{\frac{\eta'-\gamma'}{2}}\lambda^{\gamma'-|k|}$. We turn to the second term of (\ref{Eq:Transla1}). Using (\ref{CondP4}) and (\ref{Eq:Observation}), we get for all $\zeta \in\cA_{<\gamma+\alpha}(\cF)$
\begin{equ}
\bigg\| \frac{\int_{y\in B(x,\lambda)} \lambda^{-d}\big|\big\langle \Pi_{t,x} \cQ_\zeta f(t,x), P_{n;ty,tx}^{k,\gamma'} \big\rangle\big| dy}{\wde_t(x,|k|)} \bigg\|_{L^p} \lesssim \lambda^{\lceil \gamma'\rceil-|k|}2^{-n(2+\zeta-\lceil \gamma' \rceil)} t^{\frac{\eta+\alpha-\zeta}{2}}\;.
\end{equ}
Since $2+\zeta < \gamma'$, the sum over all $n$ such that $t\leq 36\cdot 2^{-2n}$ yields a bound of the right order. Regarding the third term of (\ref{Eq:Transla1}), notice that it actually vanishes whenever $\zeta=|k|-2$ since $P_n$ kills polynomials of order $r$. We use (\ref{CondP3}) to obtain for every $\zeta < |k|-2$
\begin{equs}
{}&\bigg\| \frac{\int_{y\in B(x,\lambda)} \lambda^{-d}\big|\big\langle \Pi_{t,y}\cQ_\zeta\big(f(t,y)-\Gamma_{y,x}^tf(t,x)\big) , D^k P_n\big((t,y)-\cdot\big) \big\rangle\big| dy}{\wde_t(x,|k|)} \bigg\|_{L^p}\\
&\lesssim \bigg\| \frac{\int_{y\in B(x,\lambda)}\lambda^{-d}|f(t,y)-\Gamma_{y,x}^tf(t,x)|_\zeta dy}{\wde_t(x,|k|)} \bigg\|_{L^p} 2^{-n(2+\zeta-|k|)}\\
&\lesssim t^{\frac{\eta-\gamma}{2}} \lambda^{\gamma+\alpha-\zeta} 2^{-n(2+\zeta-|k|)}\;,
\end{equs}
uniformly over all the corresponding parameters. Summing over the corresponding $n$, one gets a bound of the right order.

We now turn to the case $\lambda^2 \leq 4\cdot 2^{-2n} < 36\cdot 2^{-2n} \leq t$. Recall that $2^{-n}+\lambda$ is the size of the support of the test functions involved in (\ref{Eq:Observation}). We set $t_n=t-9\cdot 2^{-2n}$, and we observe that $t_n \geq 3(2^{-n}+\lambda)^2$. Then, we write $k!\cQ_k\big((\cP_n f)(t,y)-\Gamma_{y,x}^t (\cP_n f)(t,x)\big)$ as:
\begin{equs}
{}& \langle \cR f - \Pi_{t_n,x} f(t_n,x) , P_{n;ty,tx}^{k,\gamma'}\rangle - \langle \Pi_{t,x} \big(f(t,x)- \Gamma_{t,t_n}^x f(t_n,x)\big) , P_{n;ty,tx}^{k,\gamma'}\rangle \\
&-\sum_{\zeta \leq |k| - 2} \big\langle \Pi_{t,y}\cQ_\zeta\big(f(t,y)-\Gamma_{y,x}^t f(t,x)\big) , D^{k} P_n\big((t,y)-\cdot\big) \big\rangle\;. \label{Eq:Transla2}
\end{equs}
The first two terms can be easily bounded using (\ref{Eq:Observation}), together with (\ref{Eq:I2}) and (\ref{CondP4}) respectively. The third term coincides with the third term of (\ref{Eq:Transla1}), and the bound follows from the same arguments.

In the case $4\cdot 2^{-2n} \leq \lambda^2 \leq t$, we set $t_n=t-2^{-2n}$ and write $k!\cQ_k\big((\cP_n f)(t,y)-\Gamma_{y,x}^t (\cP_n f)(t,x)\big)$ as:
\begin{equs}
{}& \langle \cR f - \Pi_{t_n,y} f(t_n,y) , D^{k} P_n\big((t,y)-\cdot\big)\rangle \\
&- \langle \cR f - \Pi_{t_n,x} f(t_n,x) , \sum_{|k|+|\ell| < \gamma'}\frac{(y-x)^\ell}{\ell !}D^{k+\ell} P_n\big((t,x)-\cdot\big)\rangle\\
&- \sum_{\zeta \leq |k| - 2}\langle \Pi_{t,y}\cQ_\zeta \big(f(t,y)-\Gamma_{t,t_n}^y f(t_n,y)\big) , D^{k} P_n\big((t,y)-\cdot\big)\rangle \\
&+\sum_{\zeta > |k| - 2}\langle \Pi_{t,y}\cQ_\zeta \Gamma_{t,t_n}^y \big(f(t_n,y)-\Gamma_{y,x}^{t_n}f(t_n,x)\big) , D^{k} P_n\big((t,y)-\cdot\big)\rangle \label{Eq:Transla3}\\
&-\sum_{\zeta > |k| - 2}\langle \Pi_{t,y}\cQ_\zeta \Gamma_{y,x}^t \big(f(t,x)-\Gamma_{t,t_n}^x f(t_n,x)\big) , D^{k} P_n\big((t,y)-\cdot\big)\rangle\\
&+\langle \Pi_{t,x}\big(f(t,x)- \Gamma_{t,t_n}^x f(t_n,x)\big) , \sum_{|k|+|\ell| < \gamma'}\frac{(y-x)^\ell}{\ell !}D^{k+\ell} P_n\big((t,x)-\cdot\big)\rangle\;.
\end{equs}
The bounds for the two first terms follow easily from (\ref{Eq:I2}). The third term vanishes when $\zeta=|k|-2$ since $P_n$ kills polynomials of order $r$. On the other hand, for all $\zeta < |k|-2$ we have
\begin{equs}
{}&\bigg\| \frac{\int_{y\in B(x,\lambda)} \lambda^{-d}\big|\big\langle \Pi_{t,y}\cQ_\zeta \big(f(t,y)-\Gamma_{t,t_n}^y f(t_n,y)\big) , D^{k} P_n\big((t,y)-\cdot)\big) \big\rangle\big| dy}{\wde_t(x,|k|)} \bigg\|_{L^p}\\
& \lesssim \bigg\| \frac{\int_{y\in B(x,\lambda)} \lambda^{-d}\big|f(t,y)-\Gamma_{t,t_n}^y f(t_n,y)\big|_\zeta dy}{\wun_t(x,\zeta)} \bigg\|_{L^p} 2^{-n(2+\zeta-|k|)}\\
&\lesssim \bigg\| \frac{\big|f(t,x)-\Gamma_{t,t_n}^x f(t_n,x)\big|_\zeta }{\wun_t(x,\zeta)} \bigg\|_{L^p} 2^{-n(2+\zeta-|k|)}\\
&\lesssim t^{\frac{\eta-\gamma}{2}} 2^{-n(\gamma'-|k|)}\;,
\end{equs}
where we have used (\ref{CondP4}) at the second line and Jensen's inequality at the third line. Summing over all $n$ such that $4\cdot 2^{-2n} \leq \lambda^2$, one gets a bound of the right order. Regarding the fourth term of (\ref{Eq:Transla3}), we have for all $\gamma+\alpha > \beta \geq \zeta >|k|-2$
\begin{equs}
{}& \bigg\| \frac{\int_{y\in B(x,\lambda)}\lambda^{-d} \big|\big\langle \Pi_{t,y}\cQ_\zeta \Gamma_{t,t_n}^y\cQ_\beta \big(f(t_n,y)-\Gamma_{y,x}^{t_n}f(t_n,x)\big) , D^{k} P_n\big((t,y)-\cdot)\big) \big\rangle\big| dy}{\wde_t(x,|k|)} \bigg\|_{L^p}\\
& \lesssim \bigg\| \frac{\int_{y\in B(x,\lambda)}\lambda^{-d} \big|f(t_n,y)-\Gamma_{y,x}^{t_n}f(t_n,x)\big|_\beta dy}{\wde_{t_n}(x,\beta)} \bigg\|_{L^p} 2^{-n(2-|k|+\beta-c)}\\
& \lesssim 2^{-n(2-|k|+\beta-c)} t^{\frac{\eta-\gamma}{2}} \lambda^{\gamma+\alpha-\beta}\;,
\end{equs}
where we have used (\ref{CondP1}). $c$ being small, we have $2+\beta-|k|-c > 0$ so that the sum over all the corresponding $n$ yields a bound of order $\lambda^{\gamma'-|k|}t^{\frac{\eta-\gamma}{2}}$ as desired. The fifth term of (\ref{Eq:Transla3}) is treated similarly, using (\ref{CondP4}). The bound of the sixth term follows easily from (\ref{CondP4}) as well.

\medskip
\noindent\textit{Third step: translation in time.} We need to control $(\cP_+ f)(t,x) - \Gamma_{t,s}^x(\cP_+ f)(s,x)$ for all $t > s > 0$ such that $(t-s) < s$. We start with the non-integer levels $\zeta\in \cA_{<\gamma'}(\cU)$, for which we have:
\begin{equs}
{}\bigg\| \frac{\big|\cI(f(t,x)-\Gamma_{t,s}^xf(s,x))\big|_\zeta}{(t-s)^{\frac{\gamma'-\zeta}{2}}s^{\frac{\eta'-\gamma'}{2}}\wun_t(x,\zeta)}\bigg\|_{L^p}\lesssim\bigg\| \frac{\big|u(t,x)-\Gamma_{t,s}^x u(s,x)\big|_{\zeta-2-\alpha}}{(t-s)^{\frac{\gamma-\zeta+2+\alpha}{2}}s^{\frac{\eta-\gamma}{2}}\wun_t(x,\zeta-2-\alpha)}\bigg\|_{L^p} \;,
\end{equs}
where we have used (\ref{CondP2}) and the identity $\gamma'-\gamma=\eta'-\eta=2+\alpha-c$ with $c>0$. This ensures the required bound.

We now turn to the terms at integer levels $k$ with $|k|<\gamma'$. Actually we need to distinguish three sub-cases. First, we assume that $t-s < s\leq 36\cdot 2^{-2n}$ and we write $\cQ_k \big((\cP_n f)(t,x)-\Gamma_{t,s}^x(\cP_n f)(s,x)\big)$ as:
\begin{equs}[Eq:RegTime1]
{}& \langle\cR f , P_{n;tx,sx}^{k,\gamma'}\rangle - \langle \Pi_{s,x} f(s,x) , P_{n;tx,sx}^{k,\gamma'} \rangle\\
&-\sum_{\zeta\leq |k|-2}\langle \Pi_{t,x}\cQ_\zeta\big(f(t,x)-\Gamma_{t,s}^x f(s,x)\big), D^k P_n\big((t,x)-\cdot\big)\rangle\;.
\end{equs}
By (\ref{Eq:TaylorTime}), we deduce that there exists $\delta > \gamma'+c$, such that for any distribution $g$ we have
\begin{equ}\label{Eq:ObservationTime}
\big|\big\langle g, P_{n;tx,sx}^{k,\gamma'} \big\rangle\big| \lesssim \sup_{\eta\in\cB^r_-} \big|\big\langle g, \eta_{t,x}^{2^{-n}+\sqrt{t-s}} \big\rangle\big| |t-s|^{\frac{\delta-|k|}{2}} 2^{-n(2-\delta)}\;,
\end{equ}
uniformly over all $s,t,n,\lambda$ as above. This being given, the bounds of the two first terms of (\ref{Eq:RegTime1}) follow easily from (\ref{Eq:I1}) and (\ref{CondP1}). Regarding the third term, we notice that the values $\zeta$ such that $\zeta=|k|-2$ have a zero contribution, since $P_n$ kills polynomials of degree $r$. On the other hand, for all $\zeta < |k|-2$, we use (\ref{CondP3}) to get
\begin{equs}
{}&\bigg\| \frac{\big\langle \Pi_{t,x}\cQ_\zeta\big(f(t,x)-\Gamma_{t,s}^x f(s,x)\big), D^k P_n\big((t,x)-\cdot\big)\big\rangle}{\wun_t(x,|k|)} \bigg\|_{L^p}\\
&\lesssim s^{\frac{\eta-\gamma}{2}} (t-s)^{\frac{\gamma+\alpha-\zeta}{2}} 2^{-n(2+\zeta-|k|)}\;.
\end{equs}
The sum over the corresponding $n$ yields a bound of order $s^{\frac{\eta-\gamma}{2}} (t-s)^{\frac{\gamma'-|k|}{2}}$ as required.

Second, we treat the case $t-s \leq 4\cdot 2^{-2n} < 36\cdot 2^{-2n} \leq s$. Set $s_n=t-9\cdot 2^{-2n}$, notice that $s_n \geq 3(2^{-n}+\sqrt{t-s})^2$. We write $k!\cQ_k \big((\cP_n f)(t,x)-\Gamma_{t,s}^x(\cP_n f)(s,x)\big)$ as:
\begin{equs}[Eq:RegTime2]
{}& \langle\cR f - \Pi_{s_n,x} f(s_n,x) , P_{n;tx,sx}^{k,\gamma'}\rangle - \langle \Pi_{s,x}\big(f(s,x)-\Gamma_{s,s_n}^x f(s_n,x)\big) , P_{n;tx,sx}^{k,\gamma'}\rangle\\
&-\sum_{\zeta\leq |k|-2}\langle \Pi_{t,x}\cQ_\zeta\big(f(t,x)-\Gamma_{t,s}^x f(s,x)\big), D^k P_n\big((t,x)-\cdot\big)\rangle\;.
\end{equs}
The bound of the first term is a direct consequence of (\ref{Eq:I2}) and (\ref{Eq:ObservationTime}), while the third term coincides with the third term of (\ref{Eq:RegTime1}) and the calculation made above applies. Regarding the second term, by (\ref{CondP1}) and (\ref{Eq:ObservationTime}) we have for all $\zeta\in\cA(\cF)$
\begin{equs}
{}&\bigg\| \frac{\big\langle \Pi_{s,x}\cQ_\zeta \big(f(s,x)-\Gamma_{s,s_n}^x f(s_n,x)\big) , P_{n;tx,sx}^{k,\gamma'}\big\rangle}{\wun_t(x,|k|)} \bigg\|_{L^p}\\
&\lesssim s^{\frac{\eta-\gamma}{2}} (t-s)^{\frac{\delta-|k|-c}{2}} 2^{-n(2+\gamma+\alpha-\delta)}\;.
\end{equs}
Since $2+\gamma+\alpha-\delta < 0$, the sum over the corresponding $n$ of the last expression yields a bound of order $s^{\frac{\eta-\gamma}{2}} (t-s)^{\frac{\gamma'-|k|}{2}}$ as required.

Finally, we consider the case $4\cdot 2^{-2n} \leq t-s \leq s$. We set $s_n=s-2^{-2n}$, $t_n=t-2^{-2n}$, and we write $k!\cQ_k \big((\cP_n f)(t,x)-\Gamma_{t,s}^x(\cP_n f)(s,x)\big)$ as:
\begin{equs}[Eq:RegTime3]
{}& \langle\cR f - \Pi_{t_n,x} f(t_n,x) , D^k P_n\big((t,x)-\cdot\big)\rangle \\
&- \langle \cR f- \Pi_{s_n,x} f(s_n,x) , \sum_{|k|+|\ell| < \gamma'} \frac{(t-s)^{\ell}}{\ell!} D^{k+\ell} P_n\big((s,x)-\cdot\big)\rangle \\
&- \sum_{\zeta\leq |k|-2}\langle \Pi_{t,x}\cQ_\zeta\big(f(t,x) - \Gamma_{t,t_n}^x f(t_n,x)\big) , D^k P_n\big((t,x)-\cdot\big)\rangle\\
&+ \sum_{\zeta > |k|-2} \langle \Pi_{t,x} \cQ_\zeta \Gamma_{t,t_n}^x\big(f(t_n,x)-\Gamma_{t_n,s}^x f(s,x)) , D^k P_n\big((t,x)-\cdot\big) \rangle \\
&+ \langle \Pi_{s,x}\big(f(s,x)- \Gamma_{s,s_n}^x f(s_n,x)\big) , \sum_{|k|+|\ell| < \gamma'} \frac{(t-s)^{\ell}}{\ell!} D^{k+\ell} P_n\big((s,x)-\cdot\big)\rangle \;.
\end{equs}
The required bound for the first two terms follows easily from (\ref{Eq:I2}), while the third term can be bounded using (\ref{CondP3}). Let us treat the fourth term. For all $\beta \geq \zeta > |k|-2$, using (\ref{CondP1}) we have
\begin{equs}
{}&\bigg\| \frac{\langle \Pi_{t,x} \cQ_\zeta \Gamma_{t,t_n}^x \cQ_\beta\big(f(t_n,x)-\Gamma_{t_n,s}^x f(s,x)) , D^k P_n\big((t,x)-\cdot\big)  \rangle}{\wun_t(x,|k|)}\bigg\|_{L^p}\\
&\lesssim s^{\frac{\eta-\gamma}{2}} (t-s-2^{-2n})^{\frac{\gamma+\alpha-\beta}{2}} 2^{-n(2+\beta-|k|-c)}\;.
\end{equs}
Since $c$ is small, we have $2-c+\beta-|k|>0$. Therefore, the sum over all $n$ such that $4\cdot 2^{-2n}\leq (t-s)$ is bounded by a term of order $s^{\frac{\eta-\gamma}{2}}(t-s)^{\frac{\gamma'-|k|}{2}}$ as required. Finally, the fifth term of (\ref{Eq:RegTime3}) can be bounded using (\ref{CondP1}).

\medskip
\noindent\textit{Fourth step: equality with the convolution.} Let us show that $\cR \cP_+ f = P_+ * \cR f$. By the uniqueness of the reconstruction theorem (Theorem \ref{Th:ReconstructionWeight}), it suffices to show that
\begin{equs}\label{Eq:EgConv}
\bigg\|\sup_{\eta\in\cB^r}\frac{\big|\langle (P_+ * \cR f) - \Pi_{t,x} (\cP_+ f)(t,x) , \eta^\lambda_{t,x} \rangle \big|}{\w_{t+\lambda^2}(x)}\bigg\|_{L^p} \lesssim \lambda^{\gamma'} t^{\frac{\eta'-\gamma'}{2}} \;,
\end{equs}
uniformly over all $\lambda\in (0,1]$ and all $t\in [3\lambda^2,T-\lambda^2]$. Using (\ref{Eq:CondModel2}) and (\ref{Eq:DefConvAbstract}), it is elementary to get:
\begin{equ}
\big\langle (P_+ * \cR f) - \Pi_{t,x} (\cP_+ f)(t,x) , \eta_{t,x}^\lambda \big\rangle = \int_{s,y} \eta_{t,x}^\lambda(s,y) \sum_{n\geq 0} R_n(t,x,s,y) ds\,dy\;,
\end{equ}
where
\begin{equs}
R_n(t,x,s,y) &= \langle \cR f - \Pi_{t,x} f(t,x) , P_n\big((s,y)-\cdot\big) \rangle \\
&- \sum_{|\ell| < \gamma'} \frac{(s-t,y-x)^\ell}{\ell!} \langle \cR f - \Pi_{t,x} f(t,x) , D^\ell P_n\big((t,x)-\cdot\big) \rangle\;.
\end{equs}
By the scaling properties of $\eta^\lambda$, we have
\begin{equs}\label{Eq:EquaConvo}
{}&\bigg\|\sup_{\eta\in\cB^r}\frac{\big|\big\langle (P_+ * \cR f) - \Pi_{t,x} (\cP_+ f)(t,x) , \eta_{t,x}^\lambda \big\rangle\big|}{\w_{t+\lambda^2}(x)}\bigg\|_{L^p}\\
&\lesssim \sum_{n\geq 0}\bigg\|\int_{(s,y)\in B\big((t,x),\lambda\big)} \lambda^{-2-d} \frac{|R_n(t,x,s,y)|}{\w_{t+\lambda^2}(x)} ds\,dy \bigg\|_{L^p}\;,
\end{equs}
uniformly over all the parameters. Then, we distinguish three cases. First, if $3\lambda^2 \leq t \leq 36\cdot 2^{-2n}$, we write
\begin{equ}
R_n(t,x,s,y) = \langle \cR f, P^{0,\gamma'}_{n,sy,tx} \rangle - \langle \Pi_{t,x} f(t,x) , P^{0,\gamma'}_{n,sy,tx} \rangle\;.
\end{equ}
By Lemma \ref{Lemma:TaylorSpaceTime}, we deduce that for any distribution $g$ we have
\begin{equs}\label{Eq:ObservationSpaceTime}
{}&\int_{(s,y)\in B\big((t,x),\lambda\big)}\lambda^{-2-d}\big|\big\langle g, P_{n;sy,tx}^{0,\gamma'} \big\rangle\big| ds\,dy \\
&\lesssim \sup_{\eta\in\cB^r_-} \big|\big\langle g, \eta_{t+\lambda^2,x}^{2^{-n}+2\lambda} \big\rangle\big| \sum_{\ell\in\partial\gamma'}\lambda^{|\ell|} 2^{-n(2-|\ell|)}\;,
\end{equs}
uniformly over all the parameters. Therefore, arguments very similar to those presented below (\ref{Eq:Transla1}) ensure that
\begin{equ}
\bigg\|\int_{(s,y)\in B\big((t,x),\lambda\big)} \lambda^{-2-d} \frac{|R_n(t,x,s,y)|}{\w_{t+\lambda^2}(x)} ds\,dy \bigg\|_{L^p} \lesssim \sum_{\ell\in\partial\gamma'}\lambda^{|\ell|} 2^{-n(\eta'-|\ell|)}\;,
\end{equ}
so that the sum over the corresponding $n$ yields a bound of order $\lambda^{\gamma'} t^{\frac{\eta'-\gamma'}{2}}$. Second, if $3\lambda^2 \leq 3\cdot 2^{-2n} < 36\cdot 2^{-2n} \leq t$, we set $t_n= t+\lambda^2-(2^{-n}+2\lambda)^2$. Notice that $t_n\geq 3(2^{-n}+2\lambda)^2$. Then, we write
\begin{equs}
R_n(t,x,s,y) &= \langle \cR f-\Pi_{t_n,x} f(t_n,x) , P^{0,\gamma'}_{n,sy,tx} \rangle\\
&+ \langle \Pi_{t,x}\big(f(t,x)-\Gamma^x_{t,t_n} f(t_n,x)\big) , P^{0,\gamma'}_{n,sy,tx} \rangle\;,
\end{equs}
and the arguments below (\ref{Eq:Transla2}) can easily be adapted to obtain a bound of order $\lambda^{\gamma'}t^{\frac{\eta'-\gamma'}{2}}$ as above. Finally, when $3\cdot 2^{-2n} \leq 3\lambda^2 \leq t$, the desired bound follows from the arguments presented below (\ref{Eq:Transla3}). This completes the proof of the theorem.
\end{proof}

\subsection{Smooth part of the heat kernel}

We now deal with the smooth part $P_-$ of the heat kernel defined in Lemma \ref{Lemma:Kernel}. For any $u\in\ccD$, we set $f=u\cdot\Xi$ and we let $\cP_-\cR f$ denote the map
\[ (t,x) \mapsto \sum_{k\in\N^{d+1}, |k| < \gamma'} \frac{X^k}{k!} \big\langle \cR f , D^k P_-\big((t,x)-\cdot\big)\big\rangle\;,\]
which takes values in the polynomial regularity structure. The following result shows that this is an element of $\ccD'$. Here we consider the weights defined in (\ref{Eq:Weights}), but the only important feature of these weights is that they do not grow faster than $e^{\frac{|x|^2}{T}}$.
\begin{proposition}\label{Prop:IntegrationSmooth}
Let $u\in \ccD=\ccD^{\gamma,\eta}_{T,\rw}(\cU)$ and $f=u\cdot\Xi$. Then, $\cP_-\cR f\in\ccD'=\ccD^{\gamma',\eta',p}_{T,\rw}$ and we have
\begin{equs}\label{Eq:Smooth1}
\$ \cP_- \cR f \$_{\ccD'} \lesssim \$\Pi\$ (1+\$\Gamma\$)\$ u \$_{\ccD}
\end{equs}
uniformly over all $T$ in a compact domain of $(0,\infty)$, all $\ell$ in a compact domain of $\R$, all $u\in\ccD$ and all admissible models $(\Pi,\Gamma)$. Moreover, if $(\bar \Pi, \bar \Gamma)$ is another admissible model with the same weight $\wPi$ and if $\bar{u}$ belongs to the corresponding space $\bar{\ccD}$, then we have the bound
\begin{equs}\label{Eq:Smooth2}
\$\cP_- \cR f;\cP_- \bar{\cR} \bar{f}\$_{\ccD',\bar{\ccD}'} &\lesssim  \$\Pi\$(1+\$\Gamma\$)\$u;\bar{u}\$_{\ccD,\bar{\ccD}}\\
\;&+\big(\$\Pi-\bar{\Pi}\$(1+\$\bar{\Gamma}\$)+ \$\bar\Pi\$\$\Gamma-\bar\Gamma\$\big)\$\bar u\$_{\ccD}\;,\qquad
\end{equs}
uniformly over all $T,\ell$ as above, all admissible models $(\Pi,\Gamma)$, $(\bar \Pi, \bar \Gamma)$, and all $u\in\ccD$, $\bar{u}\in\bar{\ccD}$.
\end{proposition}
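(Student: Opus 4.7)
The plan: my approach relies on the fact that $\cP_-\cR f$ takes values in the polynomial part of the regularity structure, so all three seminorms defining $\ccD^{\gamma',\eta',p}_{T,\rw}$ reduce to estimates on the scalar coefficients $Q_k(t,x):=\langle \cR f,\, D^k P_-((t,x)-\cdot)\rangle$ and their Taylor remainders. The action of the structure group on polynomials being explicit, the $\cQ_j$-component of any increment of $\cP_-\cR f$ is exactly a Taylor remainder of $D^j P_-$ paired against $\cR f$. The main analytic inputs are Theorem \ref{Th:ReconstructionWeight} together with the fact that, by Lemma \ref{Lemma:Kernel}, $P_-$ is globally smooth and inherits the Gaussian decay of the heat kernel, so every derivative of $P_-$ is uniformly integrable against the exponential weights (\ref{Eq:Weights}).

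The general recipe, applied to each seminorm in turn, is to decompose $D^k P_-((t,x)-\cdot)$ against a fixed-scale partition of unity on $\R^{d+1}$ subordinate to parabolic balls of radius one, writing
\[
D^k P_-\bigl((t,x)-\cdot\bigr) = \sum_\alpha c_{k,\alpha}(t,x)\,\eta^{1}_{z_\alpha},\qquad \eta_\alpha\in\cB^r,
\]
with coefficients $|c_{k,\alpha}(t,x)|$ inheriting the Gaussian decay of $D^k P_-$ in $z_\alpha-(t,x)$. Applying the reconstruction bound (\ref{Eq:BoundReconstructionWeight}) piecewise and summing, the Gaussian decay dominates the exponential growth of $\w$, which yields a bound of order $\wun_t(x,|k|)$ in $L^p$. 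The punctual estimate then follows; the factor $t^{((\eta'-|k|)\wedge 0)/2}$ is extracted from the $t$-dependence in (\ref{Eq:BoundReconstructionWeight}), which provides a nontrivial improvement only for the pieces that sit close to the singular time $t=0$.

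The spatial-increment seminorm is treated by the same recipe with $D^j P_-((t,y)-\cdot)$ replaced by its Taylor remainder
\[
D^j P_-\bigl((t,y)-\cdot\bigr) - \sum_{|j+\ell| < \gamma'} \frac{(y-x)^\ell}{\ell!}\, D^{j+\ell} P_-\bigl((t,x)-\cdot\bigr),
\]
which, by smoothness of $P_-$, is of size $|y-x|^{\lceil\gamma'\rceil-|j|}$ times a higher spatial derivative of $P_-$. This produces the required factor $\lambda^{\gamma'-|j|}$ together with $t^{(\eta'-\gamma')/2}$ coming from the scaling of the weights. The time increment is handled in exactly the same way using Taylor's formula in the time variable, mimicking the strategy used at the end of the proof of Theorem \ref{Th:Integration}: the partition of unity is shifted in time so that $\cR f$ is probed only at times $\leq t+\lambda^2$, which is precisely what is needed in order to recover the weight $\wun_t$ from the reconstruction bound.

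Finally, the two-model bound (\ref{Eq:Smooth2}) is obtained verbatim by substituting the two-model reconstruction estimate (\ref{Eq:BoundReconstTwoModels}) for (\ref{Eq:BoundReconstructionWeight}), the algebraic structure being unchanged. The only mildly delicate point I anticipate is weight bookkeeping: after $\cR f$ has been probed at a spacetime point at unit distance from $(t,x)$, the resulting weight $\w_{s+1}(y)$ produced by the reconstruction bound must be shown comparable to $\wun_t(x,|k|)$, uniformly over $T$ and $\ell$ in compacts. This reduces to controlling the ratio $e^{t(1+|x|)}/e^{s(1+|y|)}$ for $(s,y)$ at parabolic distance at most $1$ from $(t,x)$, which is uniformly bounded on such compacts, leaving ample room for the Gaussian decay of $P_-$ to absorb the summation over the partition.
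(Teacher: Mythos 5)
Your overall strategy coincides with the paper's: reduce all three seminorms of $\cP_-\cR f$ to the single bound $\sup_{|k|<\gamma'+2}\big\|\langle \cR f, D^kP_-((t,x)-\cdot)\rangle/\w_t(x)\big\|_{L^p}\lesssim \$\Pi\$(1+\$\Gamma\$)\$u\$_{\ccD}$ via Taylor remainders of $P_-$ (incidentally, no $t$-power needs to be ``extracted'' at all: since $\eta'-\gamma'<0$ and $(\eta'-|k|)\wedge 0\le 0$, a $t$-uniform bound is stronger than what the $\ccD'$-norm requires), then prove this bound with a unit-scale partition of unity, the weighted reconstruction estimate (\ref{Eq:BoundReconstructionWeight}) applied piecewise, and the Gaussian decay of $P_-$ dominating the exponential weights (\ref{Eq:Weights}); the two-model case by substituting (\ref{Eq:BoundReconstTwoModels}). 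However, the point you flag as ``mildly delicate'' and then dismiss is exactly where there is a genuine gap: the assertion that the relevant weight ratio is uniformly bounded for points at parabolic distance at most $1$ is false. If a partition piece sits at times within distance $1$ below $t$ and is fed into (\ref{Eq:BoundReconstructionWeight}) at unit scale centred at time $s_0\in(t-1,t]$, the bound returns the weight $\rw_{s_0+1}(x-j)$ with $s_0+1$ as large as $t+1$, and $\rw_{t+1}(x-j)/\rw_t(x)\gtrsim e^{1+|x|}$; this blows up in $|x|$ and cannot be absorbed by the Gaussian decay, which decays only in the offset $|j|$, not in $|x|$. (Likewise $e^{t(1+|x|)}/e^{s(1+|y|)}$ with $0<t-s\le 1$ is at least $e^{(t-s)(1+|x|)-T}$, hence unbounded.) This failure of the weights $e^{t(1+|x|)}$ to be space--time weights is the central difficulty of the whole paper, and it cannot be waved away here; nor can one pass from $\rw_{t+1}$ back down to $\rw_t$ via (\ref{CondP1}), which controls ratios only in the opposite time direction.

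The repair uses the structure of the kernel: by Lemma \ref{Lemma:Kernel}, $D^kP_-((t,x)-\cdot)$ is supported in $\{s<t\}$, so every partition piece near time $t$ can be re-centred so that the top of its temporal support is $\le t$; for instance the piece localised in $[t-1,t]\times B(x-j,C)$ is, up to a harmless constant and a finite covering in space, of the form $\eta^{1}_{t-1,\,x-j}$ with $\eta\in\cB^r$, i.e.\ centre time plus squared scale equals exactly $t$. Then (\ref{Eq:BoundReconstructionWeight}) produces the weight $\rw_{t}(x-j)$ at the \emph{same} time $t$ for every piece, and only the same-time spatial ratio $\rw_t(x-j)/\rw_t(x)\lesssim e^{(t+\ell)|j|}$ remains, which the factor $e^{-(|j|^2-d)_+/(8t)}$ absorbs uniformly over $T,\ell$ in compacts; this is precisely how the paper's computation arrives at $\rw_t(x-j)/\rw_t(x)$. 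With this book-keeping corrected, the rest of your argument (Taylor remainders for the space and time increments, and the verbatim two-model version) goes through as in the paper.
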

\begin{proof}
Suppose that
\begin{equs}\label{Eq:BoundSmooth}
\sup_{t\in(0,T]}\sup_{|k| < \gamma'+2} \bigg\| \frac{ \big\langle \cR f , D^k P_-\big((t,x)-\cdot\big) \big\rangle}{\w_t(x)} \bigg\|_{L^p} \lesssim \$\Pi\$ (1+\$\Gamma\$)\$ u \$_{\ccD} \;,
\end{equs}
uniformly over all $T$, $\ell$, $(\Pi,\Gamma)$ and $u$ as in the statement. We stress that this implies (\ref{Eq:Smooth1}). Indeed, for the local terms of the norm this is immediate. Regarding the space translations, we have for every $k\in\N^{d+1}$ such that $|k| < \gamma'$ and all $x,y\in\R^d$:
\begin{equs}
\cQ_k\Big( \cP_- \cR f(t,y) - \Gamma^t_{y,x} \cP_- \cR f(t,x) \Big) = \langle \cR f , P^{k,\gamma'}_{-,ty,tx}\rangle\;,
\end{equs}
where $P^{k,\gamma'}_{-,ty,tx}$ is the function obtained from (\ref{Eq:TaylorSpace}) upon replacing $P_n$ by $P_-$. This being given, a simple application of Jensen's inequality shows that
\begin{equs}
{}&\bigg\| \int_{y\in B(x,\lambda)} \lambda^{-d}\frac{ \big|\big\langle \cR f , P^{k,\gamma'}_{-,ty,tx} \big\rangle\big|}{\w_t(x)} dy\bigg\|_{L^p}\\
&\lesssim \sum_{\ell \in \partial \gamma'} \bigg\| \frac{ \big\langle \cR f , D^{\ell} P_-\big((t,x)-\cdot\big) \big\rangle}{\w_t(x)} \bigg\|_{L^p} \lambda^{|\ell|-|k|}\;,
\end{equs}
so that the desired bound holds. Concerning the time translation, we have for every $k\in\N^{d+1}$ such that $|k| < \gamma'$ and all $0 < t-s < s$:
\begin{equs}
\cQ_k\Big( \cP_- \cR f(t,x) - \Gamma^x_{t,s} \cP_- \cR f(s,x) \Big) = \langle \cR f , P^{k,\gamma'}_{-,tx,sx}\rangle\;,
\end{equs}
where $P^{k,\gamma'}_{-,tx,sx}$ is the function obtained from (\ref{Eq:TaylorTime}) upon replacing $P_n$ by $P_-$. Similarly as above, a simple application of Jensen's inequality shows that
\begin{equs}
{}&\bigg\| \frac{ \big|\big\langle \cR f , P^{k,\gamma'}_{-,tx,sx} \big\rangle\big|}{\w_t(x)} \bigg\|_{L^p}\\
&\lesssim \sup_{u\in [s,t]}\sum_{\ell \in \partial \gamma'} \bigg\| \frac{ \big\langle \cR f , D^{\ell} P_-\big((u,x)-\cdot\big) \big\rangle}{\w_t(x)} \bigg\|_{L^p} |t-s|^{\frac{|\ell|-|k|}{2}}\;,
\end{equs}
and the desired bound follows.

We now prove (\ref{Eq:BoundSmooth}). Let $\tilde{\phi}:[-1,1]\rightarrow\R$ be a smooth function such that for all $x\in\R$, $\sum_{i\in\Z}\tilde{\phi}(x-i) = 1$. Then, we define $\phi(t,x)=\tilde{\phi}(t)\prod_{i=1}^d\tilde{\phi}(x_i)$ for every $(t,x)\in\R^{d+1}$, so that we obtain $\sum_{i\in\Z,j\in\Z^d}\phi\big((t-i,x-j)\big) = 1$. In particular, we have
\begin{equs}
D^k P_-\big((t,x)-\cdot\big) = \sum_{i\in\Z,j\in\Z^d} D^k P_-\big((t,x)-\cdot\big) \phi\big((t-i,x-j)-\cdot\big)\;.
\end{equs}
Since $P_-(t,x)$ is smooth and equals the heat kernel outside the parabolic unit ball, the following bound
\begin{equs}
\left\|D^k P_-\big((t,x)-\cdot\big) \phi\big((t-i,x-j)-\cdot\big) \right\|_{\cC^r} \lesssim e^{-\frac{(|j|^2-d)_+}{8t}} \;,
\end{equs}
holds uniformly over all $t\in(0,T]$, all $k\in\N^{d+1}$ such that $|k| < \gamma'+2$ and all $(i,j)\in\Z^{d+1}$. The expression (\ref{Eq:Weights}) of the weights yield that $\rw_t(x) = e^{(t+\ell)(1+|x|)}$. Using (\ref{Eq:BoundReconstructionWeight}) and setting $C=\$\Pi\$ (1+\$\Gamma\$)\$ u \$_{\ccD}$, we get
\begin{equs}
{}\Big\|\frac{\big\langle \cR f , D^k P_-\big((t,x)-\cdot\big)\big\rangle}{\w_t(x)}\Big\|_{L^p}
&\lesssim  C\sum_{i=-1}^{T+1}\sum_{j\in\Z^d} e^{-\frac{(|j|^2-d)_+}{8t}} \Big\| \frac{\rw_t(x-j)}{\rw_t(x)} \Big\|_{L^p}\\
&\lesssim C \sum_{j\in\Z^d} e^{(t+\ell)|j|-\frac{(|j|^2-d)_+}{8t}}\\
&\lesssim C\;,
\end{equs}
uniformly over all $t\in(0,T]$, all $T$ in a compact domain of $\R_+$, all $k\in\N^{d+1}$ such that $|k|< \gamma'+2$. This ends the proof of (\ref{Eq:Smooth1}). To obtain (\ref{Eq:Smooth2}), we proceed similarly. Using (\ref{Eq:BoundReconstTwoModels}), the same calculation as above gives
\begin{equs}
{}\Big\|\frac{\big\langle \cR f-\bar{\cR} \bar{f} , D^k P_-\big((t,x)-\cdot\big)\big\rangle}{\w_t(x)}\Big\|_{L^p}
&\lesssim \$\Pi\$(1+\$\Gamma\$)\$u;\bar{u}\$\\
&+\big(\$\Pi-\bar{\Pi}\$(1+\$\Gamma\$)+\$\bar{\Pi}\$\$\Gamma-\bar\Gamma\$\big)\$\bar u\$ \;,
\end{equs}
uniformly over all $t\in(0,T]$, all $T$ in a compact domain of $\R_+$, all $k\in\N^{d+1}$ such that $|k|< \gamma'+2$. This ends the proof.
\end{proof}

\subsection{Initial condition}\label{SubsectionIC}
We take (\ref{Eq:Weights}) as our choice of weights. Recall that $\ell$ is involved in the weight  at time $0$. We define $\cC^{\eta,p}_{\rw_0}(\R^d)$ as the space of distributions $f$ on $\R^d$ such that
\begin{equ}
\sup_{\lambda \in (0,1]} \bigg\| \sup_{\phi \in \cB^r(\R^d)} \frac{|\langle f , \phi^\lambda_{x} \rangle|}{\lambda^\eta \rw_0(x)} \bigg\|_{L^p(dx)} < \infty\;.
\end{equ}
When $\rw_0(x) =1$, this space coincides with the usual Besov space $\cB^\alpha_{p,\infty}(\R^d)$.

Given $u_0\in \cC^{\eta,p}_{\rw_0}(\R^d)$, we define $v=\cP u_0$ as follows:
\[ v(t,x) := \sum_{\substack{k\in\N^{d+1}\\|k|< \gamma'}} \frac{X^k}{k!} \langle u_0 , D^k P(t,x-\cdot) \rangle \;.\]
This is the lift into the polynomial regularity structure of the smooth map $(t,x)\mapsto \big(P(t,\cdot)*u_0\big)(x)$.
\begin{lemma}\label{LemmaIC}
Let $u_0\in \cC^{\eta,p}_{\rw_0}(\R^d)$ then $v=\cP u_0$ belongs to $\ccD$.
\end{lemma}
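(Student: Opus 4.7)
My approach will be to verify the three bounds defining $\ccD^{\gamma,\eta,p}_{T,\rw}$ for the polynomial-valued map $v = \cP u_0$. The key preliminary observation is that, since $P$ satisfies the heat equation, every coefficient $D^k P(t,x-\cdot)$ appearing in $v$ (with $D^k$ a mixed space-time derivative) can be rewritten as a purely spatial differential operator of order $|k|$ applied to $P_t$. In particular, there exist functions $\psi_k:\R^d\to\R$ with Gaussian decay, inherited from the heat kernel, such that $D^kP(t,x-\cdot) = t^{-|k|/2}\psi_k^{\sqrt t}(x-\cdot)$ on $\R^d$, where $\psi_k^\lambda$ denotes the $L^1$-preserving rescaling. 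Hence $v(t,x)|_m$ at integer level $m < \gamma$ equals, up to factorials, a spatial derivative of order $m$ of $P_t\ast u_0$, evaluated at $x$.

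First I would establish the punctual bound. Introducing a smooth compactly supported partition of unity $\sum_{j\in\Z^d}\chi(\cdot-j)=1$ on $\R^d$, set $\phi_j(z):=\psi_k(z+j)\chi(z)$, which is supported in the unit ball and, crucially, enjoys the Gaussian bound $\|\phi_j\|_{\cC^r}\lesssim e^{-c|j|^2}$ for some $c>0$. For $\sqrt t \leq 1$, this decomposes $D^kP(t,x-\cdot)$ into a rapidly convergent sum of scale-$\sqrt t$ test functions in $\cB^r(\R^d)$ centred at $x-\sqrt t j$. Plugging into the $\cC^{\eta,p}_{\rw_0}$-bound on $u_0$ and using the change of variable $y=x-\sqrt t j$, one arrives at
\begin{equation*}
\||v(t,\cdot)|_m/\wun_t(\cdot,m)\|_{L^p} \lesssim t^{(\eta-m)/2}\,\|u_0\|_{\cC^{\eta,p}_{\rw_0}}\sum_{j\in\Z^d}e^{-c|j|^2}\sup_y\frac{\rw_0(y)}{\wun_t(y+\sqrt t j,m)}\,.
\end{equation*}
The ratio can be controlled by $e^{-t(1+|y|)}e^{(T+|\ell|)|j|}$ via the triangle inequality, and the Gaussian decay $e^{-c|j|^2}$ then dominates the exponential factor $e^{(T+|\ell|)|j|}$ uniformly in $\ell,T$ in compact sets. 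Combined with $t^{(\eta-m)/2}\leq t^{((\eta-m)\wedge 0)/2}$ for $t\leq 1$, this yields the required punctual bound. For $\sqrt t > 1$ I would write $P_t = P_1 \ast P_{t-1}$ and apply the same argument at unit scale to $D^k P_1\ast(P_{t-1}\ast u_0)$, which is smooth by the smoothing effect of $P_{t-1}$ already at positive times.

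Next, I would handle the space and time translation bounds. For integer $m<\gamma$, the component $\cQ_m\bigl(v(t,y)-\Gamma^t_{y,x}v(t,x)\bigr)$ equals (up to combinatorial factors) the remainder of the spatial Taylor expansion of $z\mapsto D^m(P_t\ast u_0)(z)$ at $x$ to order $\lfloor\gamma\rfloor-m$, evaluated at $z=y$; here $D^m$ is once again a purely spatial operator after using the heat equation. The integral form of Taylor's theorem bounds this remainder by $|y-x|^{\gamma-m}$ times a supremum on $[x,y]$ of $|D^{m+k'}(P_t\ast u_0)|$ with $|k'|=\lceil\gamma-m\rceil$, which are the same objects estimated in the first step at level $m+|k'|$. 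Integration over $y\in B(x,\lambda)$ and the $L^p$-norm in $x$ then produce the desired bound by $t^{(\eta-\gamma)/2}\lambda^{\gamma-m}\wde_t(\cdot,m)$. The time translation is entirely analogous, with $\partial_t$ traded for $\Delta$ and the Taylor expansion performed in time.

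The hard part will be the careful book-keeping of the two weights: the spatial weight $\rw_0(x)=e^{\ell(1+|x|)}$ attached to the initial condition versus the space-time weight $\wun_t$ (resp.~$\wde_t$) built into $\ccD^{\gamma,\eta,p}_{T,\rw}$. The Gaussian decay of the heat kernel, rather than mere Schwartz decay, is essential: one needs to dominate the exponential factor $e^{(T+|\ell|)|j|}$ arising from translating $\rw_0$ by $\sqrt t j$ against the Gaussian $e^{-c|j|^2}$ coming from the heat kernel decomposition. The extra factor $e^{t(1+|x|)}$ present in $\wun_t$ compared to $\rw_0$ provides precisely the room to absorb the residual spatial spread, which is why uniformity in the statement is only asked for $T$ and $\ell$ in a compact set.
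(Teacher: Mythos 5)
Your proposal is correct in substance, but it takes a genuinely different route from the paper's. The paper does not work with the exact heat kernel at scale $\sqrt t$: it reuses the dyadic decomposition $P=\sum_{n\geq 0}P_n+P_-$ of Lemma~\ref{Lemma:Kernel}. For the singular part, each $D^kP_n(t,x-\cdot)$ is, up to a factor $2^{n|k|}$, a compactly supported test function at scale $2^{-n}$, so the $\cC^{\eta,p}_{\rw_0}$ hypothesis gives $\big\|\langle u_0,D^kP_n(t,x-\cdot)\rangle/\rw_0(x)\big\|_{L^p}\lesssim 2^{-n(\eta-|k|)}$; since $P_n(t,\cdot)$ vanishes for $t>2^{-2n}$, summing the geometric series over the surviving $n$ produces the factor $t^{(\eta-|k|)/2}$ directly, with no heat-equation trick, no Gaussian-versus-exponential comparison, and essentially no weight bookkeeping (the pieces are compactly supported, and $\rw_0\leq\wun_t(\cdot,|k|)$). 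The Gaussian decay and the exponential weights only meet at unit scale, in the treatment of $P_-$ and of the space/time translation terms, which the paper delegates to the proof of Proposition~\ref{Prop:IntegrationSmooth} (Taylor remainders as in (\ref{Eq:TaylorSpace})--(\ref{Eq:TaylorTime}) plus Jensen), exactly as in your second and third steps. What your single-scale argument buys is independence from Lemma~\ref{Lemma:Kernel}: you exploit the self-similarity of $P$ so that $D^kP_t$ is a single rescaled profile $t^{-|k|/2}\psi_k^{\sqrt t}$, at the price of trading time derivatives for spatial ones via the heat equation, carrying the Gaussian-versus-$e^{C|j|}$ comparison at every scale $\sqrt t\leq 1$, and treating $t>1$ separately; the paper's route is shorter given its existing machinery and applies to any kernel admitting such a dyadic decomposition, not just the exact Gaussian. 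One small repair to your large-time case: the splitting $P_t=P_1*P_{t-1}$ buys nothing when $t-1$ is small (there is no smoothing to invoke then), but none is needed, since for $t\in[1,T]$ the kernels $D^kP_t$ have uniformly bounded $\cC^r$ norms with Gaussian spatial decay, so your unit-scale partition-of-unity argument applies to them directly; this is in effect how the paper handles these times, since $P_+(t,\cdot)=0$ for $t>1$ and only $P_-$ contributes there.
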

\begin{proof}
The contribution coming from the smooth part of the heat kernel is handled similarly as in the proof of Proposition \ref{Prop:IntegrationSmooth} so we do not provide the details. We focus on the contribution due to the singular part of the heat kernel. By hypothesis, we have
\[ \Big\| \frac{\langle u_0 , D^k P_n(t,x-\cdot)\rangle}{\rw_0(x)} \Big\|_{L^p} \lesssim 2^{-n(\eta-|k|)}\;, \]
uniformly over all $t>0$, all $n\geq 0$ and all $k\in\N^{d+1}$ such that $|k|< \gamma+2$. Notice that the definition of the kernels $P_n$ ensures that the left hand side actually vanishes whenever $t>2^{-2n}$. Therefore, summing over $n\geq 0$ the latter bound yields
\[ \Big\| \frac{\langle u_0 , D^k P_+(t,x-\cdot)\rangle}{\rw_0(x)} \Big\|_{L^p} \lesssim t^{\frac{\eta-|k|}{2}}\;, \]
uniformly over all $t>0$. This yields the required bound for the local terms of the norm, while the bounds on the time and space translation terms follow from the same arguments as in the proof of Proposition \ref{Prop:IntegrationSmooth}.
\end{proof}

\section{Solution map and renormalisation}\label{sec:final}

We are now in position to obtain a fixed point for the solution map:
\begin{equs}[2][e:fp]
\cM_{T,v}:\ccD &\rightarrow \ccD\\
u&\mapsto (\cP_+ + \cP_-)(u\cdot\Xi) + v
\end{equs}
where $v$ is a given element in $\ccD$. In practice, we will take $v=\cP u_0$ with $u_0\in\cC^{\eta,p}_{\rw_0}$ as in Lemma \ref{LemmaIC}. Recall that the weight $\rw_0$ depends on the parameter $\ell \in \R$. We start with a simple lemma.
\begin{lemma}\label{Lemma:ReconstructionPoly}
Let $u\in\ccD^{\gamma,\eta,p}_{\rw,T}(\cU)$. Then, $\cR u$ is a function and we have $\cR u(t,x)=\cQ_0 u(t,x)$ together with $\cR u(t,\cdot) \in \cC^{\eta,p}_{\rw_t}(\R^d)$. If in addition $u$ only takes values in the strictly positive levels of the polynomial regularity structure, then $u=0$.
\end{lemma}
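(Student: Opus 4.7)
The proof splits into three claims sharing a common structural input: since $\cU$ contains only symbols of non-negative homogeneity, with $\tun$ the unique symbol of homogeneity zero, one has $(\Pi_z \tau)(z) = 0$ for every $\tau \in \cU$ with $|\tau| > 0$. For $\tau = X^k$ with $k \neq 0$ this is immediate from \eqref{Eq:CondModel2}; for $\tau = \cI(\sigma)$ (with $|\cI(\sigma)| > 0$) the $k = 0$ term in the last line of Definition~\ref{def:admissible} exactly cancels $\langle \Pi_z \sigma, P_+(z - \cdot)\rangle$ upon setting $z' = z$. Hence $(\Pi_z u(z))(z) = \cQ_0 u(z)$ for every $u$ valued in $\cU$.

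For the first claim I set $\tilde u(z) := \cQ_0 u(z)$ and verify the reconstruction bound of Theorem~\ref{Th:ReconstructionWeight}, which then gives $\cR u = \tilde u$ by uniqueness. The key identity is $\tilde u(z') - (\Pi_z u(z))(z') = (\Pi_z(\Gamma_{z,z'}u(z') - u(z)))(z')$, which follows from $\Pi_{z'} = \Pi_z \Gamma_{z,z'}$. Combining the modelled-distribution bound on $|\Gamma_{z,z'}u(z') - u(z)|_\zeta$ (of order $\|z-z'\|_\s^{\gamma - \zeta}$ in the averaged $L^p$ sense of the definition of $\ccD^{\gamma, \eta, p}_{\rw, T}$) with the pointwise control $|(\Pi_z \tau)(z')| \lesssim \|z-z'\|_\s^{|\tau|}$ (valid for $\tau \in \cU$ since $(\Pi_z \tau)(z) = 0$ and admissibility yields the corresponding local regularity) produces the required $\lambda^\gamma$ decay after integrating against $\eta^\lambda_z$. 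The second claim, $\cR u(t, \cdot) \in \cC^{\eta, p}_{\rw_t}(\R^d)$, then reduces to an $L^p$-bound on the function $\cQ_0 u(t, \cdot)$: for $\phi \in \cB^r(\R^d)$ we have $|\langle \cQ_0 u(t, \cdot), \phi^\lambda_x\rangle| \lesssim \lambda^{-d}\int_{B(x,\lambda)}|\cQ_0 u(t, y)|\, dy$, and Young's inequality together with the bounded-oscillation property of the weight (Definition~\ref{Def:Weights}) transfers the $\zeta = 0$ bound in the $\ccD^{\gamma, \eta, p}_{\rw, T}$ norm into the required estimate, using that $\lambda^\eta \geq 1$ for $\lambda \in (0, 1]$ and $\eta < 0$.

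For the third claim, assume $u$ is valued only in the strictly positive polynomial levels, so $u(z) = \sum_{1 \leq |k| < \gamma} u_k(z) X^k$ and the $\tun$-component vanishes. Applying the $\zeta = 0$ part of the space-translation condition in $\ccD^{\gamma, \eta, p}_{\rw, T}$ and computing
\[
(u(t, y) - \Gamma^t_{y, x} u(t, x))_0 = -\sum_{k_0 = 0,\, 1 \leq |k| < \gamma} u_k(t, x) (y - x)^k\;,
\]
one recognises the absolute value of a polynomial of bounded degree in $y - x$. By standard norm-equivalence on finite-dimensional polynomial spaces, its average over $B(x, \lambda)$ is comparable to $\sum_k |u_k(t, x)| \lambda^{|k|}$, so the estimate forces $\sum_{1 \leq |k| < \gamma} |u_k(t, x)|\lambda^{|k| - \gamma} \lesssim t^{(\eta - \gamma)/2} \wde_t(x, 0)$ in $L^p(dx)$ uniformly in $\lambda \in (0, 2]$. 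Letting $\lambda \to 0$ forces each coefficient $u_k$ with $1 \leq |k| < \gamma$ to vanish in $L^p$; any purely temporal monomial that happens to lie below $\gamma$ is handled identically via the time-translation condition. The main subtlety is Part 1: combining the pointwise $(\Pi_z \tau)$ bounds with the averaged modelled-distribution bounds and the weight relations \eqref{CondP0}--\eqref{CondP1} requires some care.
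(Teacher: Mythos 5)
Your proposal is correct in substance and its overall skeleton matches the paper's: identify $\cQ_0 u$ as the candidate, verify a local reconstruction-type bound, invoke uniqueness of the reconstruction, and for the last claim use the equivalence of norms on polynomials (the lower bound $\int_{B(x,\lambda)}\lambda^{-d}|\sum_k a_k (y-x)^k|\,dy \gtrsim \sum_k |a_k|\lambda^{|k|}$) together with the $\zeta=0$ space-translation estimate and $\lambda\to 0$; your third step is essentially the paper's argument, just stated for general monomials rather than only $|k|=1$. Where you genuinely diverge is in how the bound for the first claim is verified: you work pointwise, using $(\Pi_z\tau)(z)=0$ for $\tau\in\cU$ with $|\tau|>0$ and the pointwise estimate $|(\Pi_z\tau)(z')|\lesssim \|z-z'\|_\s^{|\tau|}$, aiming for the full $\lambda^{\gamma}$ rate. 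This is fine, but it silently relies on the fact that an admissible model is \emph{function-valued and continuous} on $\cU$ (so that evaluation at a point and the identity $\Pi_z=\Pi_{z'}\Gamma_{z',z}$ can be used pointwise); this is true here because every $\tau\in\cU$ is $\tun$, $X^k$ or of the form $\cI(\sigma)$ and the admissibility identity \eqref{Eq:CondModel2} plus \eqref{Eq:AlgPpty} give local uniform convergence of the kernel sums, but it is not a consequence of the analytic bounds \eqref{Eq:BoundPi} alone and deserves a proof. The paper avoids this entirely: it estimates $\langle \cQ_0 u(\cdot)-\Pi_{t,x}u(t,x),\eta^\lambda_{t,x}\rangle$ purely distributionally, splitting $u(s,y)-u(t,x)$ through the intermediate point $(s,x)$ so that only the spatially averaged and backward-in-time increments appearing in the $\ccD^{\gamma,\eta,p}_{\rw,T}$ norm are ever used, and settles for the weaker rate $\lambda^{\zeta_0}$ (with $\zeta_0$ the smallest positive homogeneity), which is all the uniqueness argument needs. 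Your route buys a sharper exponent at the cost of the extra function-valuedness input and of the space/time intermediate-point decomposition that you flag but do not carry out; the paper's route is more economical and stays within the stated model bounds. Your treatment of the membership $\cR u(t,\cdot)\in\cC^{\eta,p}_{\rw_t}$ via a maximal-function/Young estimate and the weight property is at the same level of detail as the paper's ``immediate to check'' and is fine.
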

\begin{proof}
Observe that uniformly over all $\lambda \in (0,1]$, all $t\in (2\lambda^2,T-\lambda^2]$ and all $x_0\in\R^d$, we have
\begin{equs}
{}&\Big\| \int_{(s,y)\in B((t,x),\lambda)} \lambda^{-d-2}|u(s,y) - u(t,x)|_0 \,ds\,dy  \Big\|_{L^p_{x_0,1}}\\
&\leq \sup_{s\in(t-\lambda^2,t+\lambda^2)} \Big\| \int_{y\in B(x,\lambda)}\lambda^{-d} |u(s,y) - \Gamma_{y,x}^s u(s,x)|_0 \,dy  \Big\|_{L^p_{x_0,1}}\\
&+ \Big\| \int_{(s,y)\in B((t,x),\lambda)}\lambda^{-d-2} |\Gamma_{y,x}^s (u(s,x) - \Gamma_{s,t}^x u(t,x))|_0 \,ds\,dy  \Big\|_{L^p_{x_0,1}}\\ &+ \sum_{\zeta > 0}\Big\| \int_{(s,y)\in B((t,x),\lambda)}\lambda^{-d-2} |\Gamma_{sy,tx}\cQ_\zeta u(t,x)|_0\, ds\,dy  \Big\|_{L^p_{x_0,1}}\\
&\lesssim \sup_{i=1,2}\sup_{\beta\in\cA} \wi_{t+\lambda^2}(x,\beta) \lambda^{\zeta_0}\;,
\end{equs}
where $\zeta_0$ is the smallest non-zero element of $\cA(\cU)$. Then, we write
\begin{equs}
\langle \cQ_0 u(\cdot) - \Pi_{t,x} u(t,x) , \eta_{t,x}^\lambda \rangle& = \int_{s,y} \cQ_0\big(u(s,y)-u(t,x)\big)\eta_{t,x}^\lambda(s,y) ds\,dy\\
&- \sum_{\zeta > 0}\langle \Pi_{t,x}\cQ_\zeta u(t,x) , \eta_{t,x}^\lambda\rangle\;,
\end{equs}
so that, taking the $L^p_{x_0,1}$-norm, one gets a bound of order $\lambda^{\zeta_0}$ times some weight. From the uniqueness of the reconstruction, we deduce that $\cR u(\cdot)=\cQ_0 u(\cdot)$ on $(0,T)\times\R^d$. It is then immediate to check that $\cR u(t,\cdot)$ belongs to $\cC^{\eta,p}_{\rw_t}(\R^d)$.

Recall that $\gamma \in (1,2)$. We now assume that $u(t,x) = \sum_{k\in\N^{d+1}:|k|=1} \cQ_k\big(u(t,x)\big) X^k$. Let $e_i,i=1\ldots d$ be the unit vector in the space direction $i$. We start with the following simple observation. There exists a constant $C>0$ such that
\begin{equ}
\int_{y\in B(0,\lambda)} \lambda^{-d} \Big| \sum_{i=1}^d y_i a_i \Big| dy \geq C \lambda |a|\;,
\end{equ}
uniformly over all $\lambda \in (0,1]$ and all $a\in\R^d$. This being given, we take $a= \sum_{i=1}^d (\cQ_{e_i} u(t,x))e_i$ and use the equivalence of norms in $\R^d$ to get
\begin{equs}
\Big\| \sum_{i=1}^d |\cQ_{e_i} u(t,x)| \Big\|_{L^p_{x_0,1}} &\lesssim \lambda^{-1} \Big\| \int_{y\in B(x,\lambda)} \lambda^{-d}\sum_{i=1}^d |(y-x)_i\cQ_{e_i} u(t,x)|dy \Big\|_{L^p_{x_0,1}}\\
&\lesssim \lambda^{-1}\Big\| \int_{y\in B(x,\lambda)} \lambda^{-d}|u(t,y)-\Gamma^t_{y,x} u(t,x)|_0 dy \Big\|_{L^p_{x_0,1}}\\
&\lesssim \lambda^{\gamma-1} \wde_{t}(x_0,0)\;,
\end{equs}
uniformly over all $\lambda\in (0,1]$, all $t\in(2\lambda^2,T-\lambda^2]$ and all $x_0\in\R^d$. Therefore, the l.h.s.~vanishes. This concludes the proof.
\end{proof}
\begin{theorem}\label{Th:FixedPt}
For any $T>0$ and any $u_0\in \cC^{\eta,p}_{\rw_0}$, the equation $u = \cM_{T,v}(u)$ admits a unique solution in $\ccD$. Furthermore, the map $v\mapsto u$ is Lipschitz continuous, while the map $(v,\Pi,\Gamma)\mapsto u$ is locally Lipschitz continuous.
\end{theorem}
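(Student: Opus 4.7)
The plan is a standard Banach fixed-point argument, where the crucial ingredient is showing that, for $T$ small, the affine map $\cM_{T,v}$ is a strict contraction on $\ccD = \ccD^{\gamma,\eta,p}_{T,\w}(\cU)$. First I observe that $\cM_{T,v}$ is indeed affine in $u$: the nonlinearity $u\mapsto u\cdot\Xi$ is linear from $\ccD$ into $\ccD^{\gamma+\alpha,\eta+\alpha,p}_{T,\w}(\cF)$, and both $\cP_+$ (Theorem~\ref{Th:Integration}) and $\cP_-\cR$ (Proposition~\ref{Prop:IntegrationSmooth}) are linear. Together they send $u$ into $\ccD' := \ccD^{\gamma',\eta',p}_{T,\w}(\cU)$ with $\gamma'=\gamma+2+\alpha-c$ and $\eta'=\eta+2+\alpha-c$, bounded by $\$\Pi\$(1+\$\Gamma\$)\$u\$_\ccD$; combined with $v=\cP u_0 \in \ccD$ from Lemma~\ref{LemmaIC}, this ensures $\cM_{T,v}$ is well-defined on $\ccD$ (after composing with the embedding $\ccD' \hookrightarrow \ccD$).

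The heart of the argument is then to show a continuous embedding $\ccD'_T \hookrightarrow \ccD_T$ whose operator norm is $O(T^\theta)$ with $\theta=(2+\alpha-c)/2>0$. Since $\gamma'-\gamma=\eta'-\eta=2+\alpha-c$, the prefactor $t^{(\eta'-\gamma')/2}$ in the translation bounds of $\ccD'$ equals $t^{(\eta-\gamma)/2}$, but the bound on $|f(t,y)-\Gamma^t_{y,x}f(t,x)|_\zeta$ carries an extra $\lambda^{\gamma'-\gamma}$, and the constraint $\lambda^2<t/2\leq T/2$ in the definition of $\ccD_T$ forces $\lambda\leq\sqrt{T/2}$, yielding a factor $(T/2)^\theta$; the same works for the temporal translation. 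For the punctual terms, a case analysis on the sign of $\eta'-\zeta$ versus $\eta-\zeta$ shows that $t^{((\eta'-\zeta)\wedge 0)/2}/t^{((\eta-\zeta)\wedge 0)/2}$ is controlled by $T^\theta$ uniformly, using $t\leq T$. Combining with the estimates above gives
\begin{equ}
\$\cM_{T,v}(u)-\cM_{T,v}(\bar u)\$_{\ccD_T} \leq C T^\theta \$\Pi\$(1+\$\Gamma\$)\,\$u-\bar u\$_{\ccD_T}\;.
\end{equ}
Choosing $T_0$ small enough that $CT_0^\theta\$\Pi\$(1+\$\Gamma\$)<1/2$, we get a strict contraction on $\ccD_{T_0}$, and Banach's theorem produces a unique fixed point there.

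To reach arbitrary $T>0$, I iterate in time. By Lemma~\ref{Lemma:ReconstructionPoly}, $\cR u(T_0,\cdot)=\cQ_0 u(T_0,\cdot)$ is a function belonging to $\cC^{\eta,p}_{\w_{T_0}}$; taking this as the new initial datum (and shifting time) gives, via Lemma~\ref{LemmaIC}, a new admissible $v_1=\cP(\cR u(T_0,\cdot))$ for the shifted equation on $[T_0,2T_0]$. The seminorm $\$\Pi\$$ on a strip of width $T_0$ is unchanged under time-translation (being defined on $[0,T]\times\R^d$), and the only effect of the restart is to shift $\ell\mapsto \ell+T_0$; since the constants of Theorems~\ref{Th:ReconstructionWeight}, \ref{Th:Integration} and Proposition~\ref{Prop:IntegrationSmooth} are uniform in $\ell$ on compact sets, the same $T_0$ can be used throughout. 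After $\lceil T/T_0\rceil$ iterations we obtain the solution on $[0,T]$, and uniqueness on each piece gives global uniqueness.

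Finally, the continuous dependence is read off from the contraction structure. For fixed $(\Pi,\Gamma)$, writing $\ccN(u) := \cP_+(u\Xi)+\cP_-\cR(u\Xi)$, the fixed point satisfies $(I-\ccN)u=v$; since $\|\ccN\|<1/2$ on each time slice, $u-\bar u=(I-\ccN)^{-1}(v-\bar v)$, giving Lipschitz dependence in $v$ (composing the estimates across the finitely many subintervals). For $(v,\Pi,\Gamma)$ jointly, the two-models bounds in the last parts of Theorem~\ref{Th:Integration} and Proposition~\ref{Prop:IntegrationSmooth} produce
\begin{equ}
\$u-\bar u\$_{\ccD_{T_0}} \leq CT_0^\theta\$\Pi\$(1+\$\Gamma\$)\$u-\bar u\$_{\ccD_{T_0}} + R(\$\Pi-\bar\Pi\$,\$\Gamma-\bar\Gamma\$,\$\bar u\$)+\$v-\bar v\$\;,
\end{equ}
which, absorbing the first term on the right, yields local Lipschitz continuity on bounded sets. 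The main obstacle is the verification in the second paragraph: one must carefully track the exponents of $\lambda$, $t$, and $T$ in all three terms of the $\ccD^{\gamma,\eta,p}_T$-norm to confirm that the embedding constant is genuinely $T^\theta$ with $\theta>0$; the subtlety lies in the punctual terms, where the behaviour as $t\downarrow 0$ depends delicately on the relative sizes of $\zeta$, $\eta$ and $\eta'$.
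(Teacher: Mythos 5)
Your proposal is correct and follows essentially the same route as the paper: a small-time contraction obtained from the embedding $\ccD'\hookrightarrow\ccD$ with a positive power of $T$ (the paper simply asserts the existence of $\rho>0$ with $\$\cdot\$_{\ccD'}\le T^\rho\$\cdot\$_{\ccD}$), followed by restarting at the reconstructed solution via Lemmas \ref{Lemma:ReconstructionPoly} and \ref{LemmaIC}, with the restart absorbed into the shift $\ell\mapsto\ell+s$ of the weights (the paper phrases this through the shifted model $\Pi^{\downarrow s}$) and continuity read off from the two-model bounds. The only caveat, which you flag yourself, is that the uniform gain in the punctual terms is $T^{\theta'}$ with $\theta'=\min\{\theta,(\zeta-\eta)/2:\zeta\in\cA(\cU),\ \eta<\zeta<\eta'\}$ rather than exactly $T^\theta$, but this is still a positive power since all $\zeta\in\cA(\cU)$ are nonnegative while $\eta<0$, so the argument goes through unchanged.
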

\begin{proof}
We first introduce a shift map on the models and the modelled distributions. For all $s\geq 0$, we let $\Pi^{\downarrow s}$ and $\Gamma^{\downarrow s}$ be defined as follows
\begin{equ}
\langle \Pi^{\downarrow s}_{z} \tau , \varphi \rangle := \langle \Pi_{z+(s,0)} \tau , \varphi(\cdot+s,\cdot)\rangle \;,\quad \Gamma^{\downarrow s}_{z,z'} \tau = \Gamma_{z+(s,0),z'+(s,0)} \tau\;.
\end{equ}
We let $\ccD^{\downarrow s,\gamma,\eta,p}_{\rw,T}$ be the space of modelled distributions associated with the shifted model $(\Pi^{\downarrow s},\Gamma^{\downarrow s})$ and the shifted weights $\rw^{\downarrow s}$ defined by setting
\begin{equ}
{\rw}^{\downarrow s,(i)}_t(x,\zeta) := \wi_{t+s}(x,\zeta) \;.
\end{equ}
This amounts to shifting the parameter $\ell$ by $s$, in the definition (\ref{Eq:Weights}) of the weights. Formally, one should also write $\cR^{\downarrow s}$ and $\cP^{\downarrow s}$ for the convolution and reconstruction operators associated with the shifted model, but we refrain from doing that for the sake of readability.

Recall that the spaces $\ccD$ and $\ccD'$ differ by their parameters $\eta,\gamma$ and $\eta',\gamma'$. Since $\eta'-\eta=\gamma'-\gamma > 0$, we deduce that there exists $\rho > 0$ such that $\$ \cdot \$_{\ccD'} \leq T^\rho \$ \cdot \$_{\ccD}$. Until the end of the proof, we will be working in the spaces $\ccD^{\gamma,\eta,p}_{\rw,T}$ as well as their shifted counterparts and we will play with only two parameters, namely $T$ and $\ell$. Recall that $\ell$ is the parameter involved in the weight at time $0$. We will use the notation $\ccD_{T,\ell}$ instead of $\ccD^{\gamma,\eta,p}_{\rw,T}$ for simplicity.

Using Theorem \ref{Th:Integration} and Proposition \ref{Prop:IntegrationSmooth}, we deduce the existence of $C > 0$ such that
\begin{equ}
\$ \cM_{T,v}(u)-\cM_{T,v}(\bar{u}) \$_{\ccD^{\downarrow s}_{T,\ell}} = \$ (\cP_+ + \cP_-)\big((u-\bar u)\Xi\big) \$_{\ccD^{\downarrow s}_{T,\ell}} \leq C\,T^\rho\$ u-\bar{u} \$_{\ccD^{\downarrow s}_{T,\ell}} \;,
\end{equ}
as well as
\begin{equ}\label{Eq:FixedPt}
\$ \cM_{T,v}(u) \$_{\ccD^{\downarrow s}_{T,\ell}} = \$ (\cP_+ + \cP_-)\big(u\Xi\big)+v \$_{\ccD^{\downarrow s}_{T,\ell}} \leq C\,T^\rho\$ u\$_{\ccD^{\downarrow s}_{T,\ell}}+\$ v\$_{\ccD^{\downarrow s}_{T,\ell}} \;,\quad
\end{equ}
uniformly over all $s,T$ in a compact set of $\R_+$, all $\ell$ in a compact set of $\R$ and all $u,\bar{u},v \in \ccD^{\downarrow s}_{T,\ell}$. The constant $C$ does however depend on the realisation of the model through 
the quantities appearing in Lemma~\ref{lem:normModel}.

Fix a ``target'' final time $T>0$ and $\ell_0\in\R$. Taking $T^*$ small enough, we deduce that $\cM_{T^*,v}$ is a contraction on $\ccD_{T^*,\ell}^{\downarrow s}$ uniformly over all $\ell \in [\ell_0,\ell_0+T]$, all $s \in [0,T]$ and all $v\in\ccD_{T^*,\ell}^{\downarrow s}$. Fix $u_0\in\cC^{\eta,p}_{\rw_0}$ and let $v=\cP u_0 \in\ccD_{T^*,\ell}$. The map $\cM_{T^*,v}$ admits a unique fixed point $u^* \in \ccD_{T^*,\ell_0}$. If $T^*>T$ we are done, otherwise we take $s\in (0,T^*)$ and we define $\ell^* = \ell_0+s < \ell_0 + T$, $u_s := \cR u(s,\cdot)$ and $v^* := \cP u_s$. By Lemma \ref{Lemma:ReconstructionPoly} and \ref{LemmaIC}, we know that $v^*\in \ccD_{T^*,\ell^*}$. The map $\cM_{T^*,v^*}$ admits a unique fixed point $u^{**}\in\ccD^{\downarrow s}_{T^*,\ell^*}$.
We then set $u(t,\cdot)=u^*(t,\cdot)$ when $t\in (0,T^*]$ and $u(t,\cdot)=u^{**}(t-s,\cdot)$ when $t\in(T^*,T^*+s]$.
It follows in the same way as in \cite[Prop.~7.11]{Hairer2014} that $u$ is indeed the unique solution to the 
fixed point problem $\cM_{T^*+s,v}(u) = u$, and that this construction can be iterated until one
reaches the final time $T$. Note that the linearity of the problem was exploited in an essential way here, since
this is what guarantees that the time $T^*$ of local well-posedness does not depend on the initial condition.

Regarding the joint dependence on the model and the initial condition, we obtain similarly as above and thanks to the same results that for all $R>0$, there exists $T^*>0$ such that
\begin{equs}
\$ u;\bar{u} \$_{\ccD^{\downarrow s},\bar\ccD^{\downarrow s}} &\leq \$ \Pi-\bar\Pi \$ + \$ \Gamma-\bar\Gamma \$ + \$ v ; \bar v \$_{\ccD^{\downarrow s},\bar\ccD^{\downarrow s}}\;,
\end{equs}
uniformly over all $s$ in a compact set of $\R_+$, and over all $(\Pi,\Gamma)$, $(\bar \Pi,\bar \Gamma)$ and $v,\bar v\in\ccD_{T^*,\ell_0}^{\downarrow s}$, such that the norms of all these elements are bounded by $R$. This yields the local Lipschitz continuity of the solution map on $(0,T^*]$. Iterating the argument as above, we obtain the local Lipschitz continuity over any arbitrary interval $(0,T]$.
\end{proof}

Let $v=\cP u_0$ with $u_0\in\cC^{\eta,p}_{\rw_0}$. It is easily seen from Theorems \ref{Th:ReconstructionWeight} and \ref{Th:Integration} that the unique fixed point of $\cM_{T,v}$ associated with the canonical model $(\Pi^{(\eps)}, F^{(\eps)})$ coincides, upon reconstruction, with the solution to the well-posed SPDE (\ref{e:Eeps}) presented in the introduction. However, the sequence of canonical models $(\Pi^{(\eps)}, F^{(\eps)})$ does not converge when $\epsilon\rightarrow 0$, due to the ill-defined products involving the white noise.

\begin{theorem}
For every $\epsilon \in (0,1]$, there exists a renormalised model $(\hat \Pi^\epsilon, \hat F^\epsilon)$ such that:\begin{itemize}
\item the unique fixed point of $\cM_{T,v}$ associated to $(\hat \Pi^\epsilon, \hat F^\epsilon)$ coincides, upon reconstruction, with the classical solution of (\ref{e:Eepshat}),
\item the sequence $(\hat \Pi^\epsilon, \hat F^\epsilon)$ converges to an admissible model $(\hat{\Pi},\hat{F})$, that is, there exists $C,\delta>0$ such that uniformly over $\epsilon\in (0,1]$ we have
\begin{equs}
\$ \hat\Pi^\epsilon-\hat\Pi \$ + \$ \hat\Gamma^\epsilon-\hat\Gamma \$ \leq C\epsilon^\delta\;.
\end{equs}
\end{itemize}
\end{theorem}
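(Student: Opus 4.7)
The plan is to construct the renormalised model in the standard BPHZ-style fashion and then reduce the convergence statement to the pointwise Wiener chaos estimates already carried out in~\cite{Etienne}, the only new ingredient being the weighted seminorm. First I would define an element $M_\epsilon$ of the renormalisation group $\mathfrak{R}$ acting on $\cT$ by subtraction on the three ``problematic'' symbols: $M_\epsilon(\Xi\cI(\Xi)) = \Xi\cI(\Xi) - c_\epsilon \mathbf{1}$, $M_\epsilon(\Xi\cI(\Xi\cI(\Xi\cI(\Xi)))) = \Xi\cI(\Xi\cI(\Xi\cI(\Xi))) - c^{(1,1)}_\epsilon \cI(\Xi) - c^{(1,2)}_\epsilon\mathbf{1} - (\text{appropriate lower-order terms})$, and $M_\epsilon$ extended to the rest of $\cT$ by the algebraic compatibility rules of~\cite[Sec.~8]{Hairer2014}. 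One then sets $(\hat\Pi^\epsilon, \hat F^\epsilon) = M_\epsilon (\Pi^{(\epsilon)}, F^{(\epsilon)})$; the fact that $M_\epsilon$ lies in $\mathfrak{R}$ guarantees that $(\hat\Pi^\epsilon, \hat F^\epsilon)$ is again an admissible model in the sense of Definition~\ref{def:admissible}.

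Next I would identify the reconstruction of the abstract fixed point with the classical solution of~\eqref{e:Eepshat}. Since the abstract solution $u \in \ccD$ expands as $u = U\,\tun + U\,\cI(\Xi) + (\text{terms of homogeneity} \geq 1)$ where $U$ is the polynomial component, the reconstruction $\cR^\epsilon u$ is a classical function coinciding with $U$ (by Lemma~\ref{Lemma:ReconstructionPoly}). A direct computation at the level of the canonical model shows that the product $\hat\Pi^\epsilon_z (u \cdot \Xi)$ differs from $\Pi^{(\epsilon)}_z(u\cdot\Xi)$ by $C_\epsilon \cdot U(z)$ acting multiplicatively; combined with \eqref{e:commutInt} and Theorem~\ref{Th:FixedPt}, this yields that $\cR^\epsilon u$ solves~\eqref{e:Eepshat}, which is classically well-posed for fixed $\epsilon$ since $\xi_\epsilon$ is smooth.

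The hard part is the convergence of the models in the weighted seminorm $\$\cdot\$$. For the torus-valued statement the bounds
\begin{equs}
\bbE |\langle \hat\Pi^\epsilon_z \tau, \eta_z^\lambda\rangle|^2 \lesssim \lambda^{2|\tau|+\theta}\;,\qquad \bbE |\langle (\hat\Pi^\epsilon_z - \hat\Pi_z) \tau, \eta_z^\lambda\rangle|^2 \lesssim \epsilon^\theta \lambda^{2|\tau|+\theta}\;,
\end{equs}
for some $\theta > 0$, are proved symbol-by-symbol in~\cite{Etienne} by a direct Wiener-chaos computation; these are local in $z$ and hence transfer verbatim. To upgrade them to $\$\cdot\$$ on $[0,T]\times\R^d$, I would apply a Kolmogorov/wavelet-type argument analogous to Lemma~\ref{Lemma:RegXi}: testing against the wavelet basis at each scale $2^{-n}$, one gets a $2p$-th moment bound of order $2^{-2np\theta'}$ at each point, and summation over $x \in 2^{-n}\Z^d$ weighted by $(1+|x|)^{-2ap}$ converges provided $p$ is chosen large enough relative to $d/a$. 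This is precisely where the polynomial weight $\wPi(x) = (1+|x|)^{c/28}$ enters: combined with Lemma~\ref{lem:normModel}, it absorbs the sub-polynomial growth of $\|\Pi^{(\epsilon)}\|_z$ at infinity. The recursion on symbols in the regularity structure, as in the proof of Lemma~\ref{lem:normModel}, then extends this bound from $\Xi$ to all elements of $\cT$ and from $\hat\Pi$ to $\hat\Gamma$ via the algebraic identity~\eqref{Eq:AlgPpty} and the bound of~\cite[Lemma~5.21]{Hairer2014}, at the price of potentially multiplying the exponent of the weight by a constant, which is harmless since $a = c/28$ was free.
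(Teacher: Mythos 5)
Your overall route is the paper's: the paper obtains $(\hat\Pi^\eps,\hat F^\eps)$ and its convergence by citing the renormalisation-group construction and the chaos estimates of Hairer--Pardoux \cite[Th.~4.5]{Etienne} (the (PAM) case being identical \emph{mutatis mutandis}, with locality of the models handling the passage from the torus to the whole space), and then, exactly as you do, identifies the reconstructed fixed point with the classical solution of \eqref{e:Eepshat} by combining the pointwise identities \eqref{Eq:Pihat}, the explicit form of the fixed-point expansion, the fact that for smooth models $(\cR F)(z)=(\hat\Pi^\eps_z F(z))(z)$, and \eqref{e:commutInt}. Two remarks on your version of this identification: the assertion that the renormalised product differs from the canonical one by $C_\eps\, u(z)$ rests on the structural fact that the symbols $\cI(\Xi)$, $\cI(\Xi\cI(\Xi))$ and $\cI(\Xi\cI(\Xi\cI(\Xi)))$ all appear in the fixed point with the \emph{same} coefficient $u(z)$ (and $\cI(X^k\Xi)$ with $\d_k u$), so you should make this explicit rather than hide it in ``terms of homogeneity $\geq 1$''; and your paragraph upgrading the local bounds to the weighted seminorm fills in precisely the point the paper only gestures at, which is welcome.

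One step as written would, however, fail: you cannot extend the convergence bound from $\Xi$ to the rest of $\cT$ by the multiplicative recursion used in Lemma~\ref{lem:normModel}. That recursion relies on the canonical definition $\Pi^{(\eps)}_z(\tau\bar\tau)(z')=(\Pi^{(\eps)}_z\tau)(z')(\Pi^{(\eps)}_z\bar\tau)(z')$, which is false for the renormalised models, meaningless for the limit $\hat\Pi$, and in any case could never yield bounds uniform in $\eps$ --- the divergence of exactly these products is the reason for renormalising. The correct procedure, which is within reach since the bounds you quote from \cite{Etienne} are proved symbol by symbol, is to run the Kolmogorov/wavelet argument of Lemma~\ref{Lemma:RegXi} separately for each $\tau$, using equivalence of moments in a fixed Wiener chaos and stationarity in $z$ so that the sum over the lattice is absorbed by the polynomial weight $\wPi$; the bounds on $\hat\Gamma^\eps$ and $\hat\Gamma^\eps-\hat\Gamma$ are then deduced from admissibility and the $\Pi$--$\Gamma$ relations (as in \cite[Lem.~5.21]{Hairer2014}), as you indicate. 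With that replacement your argument matches the proof the paper outsources to \cite{Etienne}.
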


\begin{proof}
This result is due to Hairer and Pardoux~\cite[Th 4.5]{Etienne} in the case of (SHE). The case of (PAM) is treated similarly \textit{mutatis mutandis}. Let us briefly explain why the solution to \eqref{e:fp}
yields the classical solution to \eqref{e:Eepshat} when applied to the renormalised model $(\hat \Pi^\epsilon, \hat F^\epsilon)$.

We first note that, for any space-time point $z$, the renormalised model fulfils the following identities:
\begin{equs}[Eq:Pihat]
\hat \Pi^\epsilon_z( \Xi ) (z) = \xi_\epsilon(z) \;,\quad \hat \Pi^\epsilon_z( \Xi \cI(\Xi) ) (z) = -c_\epsilon \;,\quad \hat \Pi^\epsilon_z( \Xi \cI(\Xi\cI(\Xi)) ) (z) = 0 \;,\qquad\\
\hat \Pi^\epsilon_z( \Xi \cI(\Xi\cI(\Xi\cI(\Xi))) ) (z) = -c^{(1)}_\epsilon \;,\quad \hat \Pi^\epsilon_z( \Xi \cI(X_i\Xi) ) (z) = 0 \;,\qquad
\end{equs}
where $c^{(1)}_\epsilon=c^{(1,1)}_\epsilon+c^{(1,2)}_\epsilon$, see (\ref{Eq:RenormCsts}) for the values 
of these constants.

Furthermore, iterating \eqref{e:fp} shows that any solution $U$ to $\CM_{T,v}(U) = U$ will necessarily be of the form
\begin{equ}
U(z) = u(z)\big(\one + \cI(\Xi) + \cI(\Xi\cI(\Xi)) + \cI(\Xi\cI(\Xi\cI(\Xi))) \big)
+ \sum_{|k|=1}\d_k u(z) \bigl(X^k + \cI(X^k\Xi)\bigr) \;,
\end{equ}
for some continuous functions $u$ and $\d_k u$. Recalling that, for fixed $\eps > 0$, 
the reconstruction operator associated to the renormalised model  
is given by $(\CR F)(z) = (\hat \Pi^\epsilon_z F(z))(z)$, it then follows from
\eqref{Eq:Pihat} that
\begin{equ}
\bigl(\CR \Xi U\bigr)(z) = u(z)(\xi_\eps(z) - C_\eps)\;.
\end{equ}
Combining this with \eqref{e:commutInt} then concludes the proof. 
\end{proof}

We are now in position to conclude the proof of the main result of this article.

\begin{proof}[of Theorem \ref{Th:Main}]
The local Lipschitz continuity of the solution map stated in Theorem \ref{Th:FixedPt} together with the convergence of the renormalised models obtained in the previous theorem ensure that the sequence of renormalised solutions converge to a limit $\hat{u}\in\ccD^{\gamma,p}_{\rw,T}$, for any initial condition $u_0\in\cC^{\eta,p}_{\rw_0}$. By Theorem \ref{Th:Reconstruction}, we deduce the convergence of the reconstructed solution $\hat\cR^\epsilon \hat{u}^\epsilon$ towards $\hat\cR \hat{u}$ in the space $\ccE^{\eta-c,p}_{\rw,T}$.

Finally, a simple computation shows that the Dirac mass at some given point $x_0$ belongs to $\cC^{\eta,p}_{\rw_0}$ as soon as $p\leq \frac{d}{d+\eta}$, whatever weight $\rw_0$ one chooses. Since $\eta$ needs to be greater than $-1/2$ for our result to hold, one can choose a Dirac mass when $p =1$ for instance. This concludes the proof.
\end{proof}

\bibliographystyle{Martin}
\bibliography{library_mSHE}

\end{document}